\newcommand{\R}{\mathbb{R}}
\newcommand{\C}{\mathbb{C}}
\newcommand{\N}{\mathbb{N}}
\newcommand{\scrS}{\mathscr{S}}
\newcommand{\calF}{\mathcal{F}}
\newcommand{\calK}{\mathcal{K}}
\newcommand{\calR}{\mathcal{R}}
\newcommand{\calO}{\mathcal{O}}
\newcommand{\diam}{\operatorname{diam}}
\newcommand{\vare}{\varepsilon}
\numberwithin{equation}{section}
\newcommand{\ud}[0]{\,\mathrm{d}}
\newcommand{\dist}[0]{\operatorname{dist}}
\newcommand{\op}[1]{\operatorname{#1}}
\newcommand{\abs}[1]{|#1|}
\newcommand{\babs}[1]{\big|#1\big|}
\newcommand{\Babs}[1]{\Big|#1\Big|}
\newcommand{\norm}[2]{|#1|_{#2}}
\newcommand{\Norm}[2]{\|#1\|_{#2}}
\newcommand{\bNorm}[2]{\big\|#1\big\|_{#2}}
\newcommand{\BNorm}[2]{\Big\|#1\Big\|_{#2}}
\newcommand{\ave}[1]{\langle #1\rangle}
\newcommand{\bave}[1]{\big\langle #1\big\rangle}
\newcommand{\Bave}[1]{\Big\langle #1\Big\rangle}
\newcommand{\BMO}[0]{\operatorname{BMO}}
\newcommand{\VMO}[0]{\operatorname{VMO}}
\newcommand{\CMO}[0]{\operatorname{CMO}}
\newcommand{\supp}[0]{\operatorname{spt}}
\newcommand{\loc}[0]{\operatorname{loc}}
\newcommand{\sign}[0]{\operatorname{sgn}}
\newcommand{\ch}[0]{\operatorname{ch}}
\newcommand{\calD}[0]{\mathcal{D}}
\newcommand{\wt}[1]{{\widetilde{#1}}}
\theoremstyle{plain}
\newtheorem{thm}[equation]{Theorem}
\newtheorem{lem}[equation]{Lemma}
\newtheorem{prop}[equation]{Proposition}
\newtheorem{que}[equation]{Question}
\newtheoremstyle{named}{}{}{\itshape}{}{\bfseries}{.}{.5em}{\thmnote{#3}#1}
\theoremstyle{named}
\theoremstyle{definition}
\newtheorem{defn}[equation]{Definition}
\theoremstyle{remark}
\newtheorem{rem}[equation]{Remark}
\title[On commutators along curves]{On the $L^p$-to-$L^q$ boundedness and compactness of commutators along monomial curves}
\author{Tuomas Oikari}
\address[T.O.]{Department of Mathematics and Statistics, University of Jyv\"{a}skyl\"{a}, Seminaarinkatu 15, 40014 Jyv\"{a}skyl\"{a}n yliopisto, Finland}
\email{tuomas.v.oikari@jyu.fi}
\subjclass[2020]{42B20}
\keywords{Hilbert transform, monomial curve, commutator, bounded mean oscillation, singular integral, Calderón-Zygmund operator}
\begin{document}
	
\begin{abstract} 
		For exponents $p,q\in (1,\infty),$ we study the $L^p$-to-$L^q$ boundedness and compactness of the commutator 
		$
		[b,H_{\gamma}] = bH_{\gamma} - H_{\gamma}b,
		$ where $H_{\gamma}$ is the Hilbert transform along the monomial curve $\gamma$ and the function $b$ is complex valued.

	We obtain a sparse form domination of the commutator and show that $b\in \BMO^{\gamma,\alpha}$ is a sufficient condition for boundedness in a half-open range of exponents $q\in[p,p+p(\Xi));$ in doing so we obtain a genuinely fractional case $q>p$ and also give a new proof of the case $q=p,$ due to Bongers, Guo, J. Li and Wick \cite{BGLW2021}. 
	Involving the $L^p$ improving phenomena of single scale averages, we show that $b\in \VMO^{\gamma,\alpha}$ is a sufficient condition for compactness in the same range of exponents $q\in[p,p+p(\Xi))$ as in the case of boundedness.
	
	For $q<p,$ we show that $b\in \dot L^r$ is a sufficient condition for compactness; the argument is new and we use it to obtain a new short proof of the same implication for commutators of Calderón-Zygmund operators, recently due to Hyt\"{o}nen, K. Li, Tao and Yang \cite{HLTY2022}.
	
	For necessity, and now restricting to the plane, we develop the approximate weak factorization argument to the extent that it now works for all monomial curves, thus removing the extra assumption present in our earlier work \cite{Oik2022}
	that the graph $\gamma(\R)\subset\R^2$ intersects adjacent quadrants. 
	As a consequence in the plane, we obtain that all of the above sufficiency conditions are necessary.
	
	We present several open problems.
\end{abstract}

\maketitle

\setcounter{secnumdepth}{4}
\setcounter{tocdepth}{2}
\tableofcontents

\section{Introduction}\label{subsect:14}
\subsection{Main results}\label{sect:intro}
We study the commutator
\begin{align*}
	[b,H_{\gamma}]f(x) = b(x)H_{\gamma}f(x)-H_{\gamma}(bf)(x)
\end{align*} 
of a complex valued symbol $b:\R^n\to\C$ against the Hilbert transform along a monomial curve $\gamma,$
\begin{align*}
	H_{\gamma}f(x) = p.v.\int_{\R}f(x-\gamma(t))\frac{\ud t}{t},
\end{align*}
where $\gamma$ is specified by three parameters $\beta\in\R_+^n$ and $\delta,\vare\in \{-1,1\}^n$ as 
\begin{align*}
\gamma:\R\to\R^n,\qquad \gamma(t) = 
	\begin{cases}
		(\varepsilon_1\abs{t}^{\beta_1},\dots,\varepsilon_n\abs{t}^{\beta_n}),\quad t > 0, \\ 
		(\delta_1\abs{t}^{\beta_1},\dots,\delta_n\abs{t}^{\beta_n}),\quad t\leq 0,
	\end{cases}
\end{align*}
under the constraints that $0<\beta_1<\cdots<\beta_n$
and there exists an index $i$ so that $\varepsilon_i\not=\delta_i.$ 

For the rest of this article we assume that $p,q\in (1,\infty),$ that $\gamma$ stands for a monomial curve, and that the symbol $b$ of the commutator is complex valued and locally integrable; we make these background assumptions on everything we write.
The main contributions of this article are the following three Theorems \ref{thm:main:bdd}, \ref{thm:main:comp} and \ref{thm:main:q<p} that we now state partially, in practice we prove more.
\begin{thm}\label{thm:main:bdd} For each $p\in(1,\infty),$ there exists $p(\Xi)>p$ such that for all $q\in [p,p(\Xi))$ we have
		\begin{align}
		\Norm{[b,H_{\gamma}]}{L^p(\R^2)\to L^q(\R^2)}\sim 
			\Norm{b}{\BMO^{\gamma,\alpha}(\R^2)}.
	\end{align}
\end{thm}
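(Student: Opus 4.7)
The plan is to split the equivalence $\Norm{[b,H_\gamma]}{L^p(\R^2)\to L^q(\R^2)}\sim \Norm{b}{\BMO^{\gamma,\alpha}(\R^2)}$ into the upper and lower bounds and attack each by a different method: sparse form domination for sufficiency ($\lesssim$), and the approximate weak factorization (awf) method for necessity ($\gtrsim$). The threshold $p(\Xi)$ is dictated by the $L^{p_1}$-to-$L^{q_1}$ improving range of the single-scale annular averages along $\gamma$, which is the geometric invariant of the curve that controls the possible fractional gain $q>p$.

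\textbf{Upper bound.} I would first establish a sparse form domination of the shape
$$|\langle [b,H_\gamma]f, g\rangle|\lesssim \sum_{Q\in\mathcal{S}} |Q|\,\langle |b-\langle b\rangle_Q| f\rangle_{Q,p_1}\,\langle g\rangle_{Q,q_1'}$$
over a sparse family $\mathcal{S}$ of anisotropic rectangles adapted to the dilation structure of $\gamma$, with inner exponents $p_1,q_1$ chosen so that $1/p_1-1/q_1$ lies in the single-scale $L^{p_1}$-improving range of $H_\gamma$. The proof proceeds via a grand-maximal-function stopping time construction, using at each scale the $L^{p_1}\to L^{q_1}$ improving estimate for the annular pieces of $H_\gamma$ as the basic local ingredient, and then iterating. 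Once the sparse domination is in place, the $L^p\to L^q$ upper bound in terms of $\Norm{b}{\BMO^{\gamma,\alpha}}$ follows by a now standard testing-and-duality argument by choosing $p_1,q_1$ slightly inside $p,q$; the endpoint of this range is exactly $p(\Xi)$. In the diagonal case $q=p$ (so $\alpha=0$) this recovers the result of Bongers--Guo--Li--Wick.

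\textbf{Lower bound.} For the lower direction I would adapt and extend the awf method. The aim is to show that, given a curve-adapted rectangle $R$ in the plane, one can construct test functions $f,g$ with $\Norm{f}{L^p}\Norm{g}{L^{q'}}$ under control, and such that $|\langle [b,H_\gamma]f,g\rangle|\gtrsim |R|^{1+\alpha/2}\,\osc_R b.$ The core mechanism is to use the operator $H_\gamma$ to couple two distant spatial locations by a translate along $\gamma$, thereby forcing the commutator bilinear form to detect the oscillation of $b$ on $R$. Iterating such couplings and summing over an appropriate scale family then produces the BMO$^{\gamma,\alpha}$ lower bound.

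\textbf{Main obstacle.} The hard point, and the place where the argument must improve on \cite{Oik2022}, is that when the graph $\gamma(\R)\subset\R^2$ stays in a single pair of opposite quadrants, a single-step awf coupling does not reach enough directions to resolve the oscillation of $b$ on an arbitrary curve-adapted rectangle. I plan to resolve this by replacing the one-step coupling with an iterated multi-scale coupling: combine translates $\gamma(t)$ at several scales $t$, apply a paraproduct-type decomposition to separate the principal term from the error, and absorb the errors by appealing to the sparse domination already proved in the upper-bound step. The delicate part is to keep the iteration quantitatively controlled so that the resulting lower bound matches the upper bound not only at $q=p$ but uniformly across the full fractional range $q\in[p,p(\Xi))$.
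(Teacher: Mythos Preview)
Your upper-bound strategy matches the paper's exactly: sparse form domination of $[b,H_\gamma]$ (Theorem~\ref{thm:SDOM}) with inner exponents $(1/r,1/s')\in\Omega(n)$ coming from the $L^p$-improving range of the single-scale averages, followed by H\"older, John--Nirenberg (Lemma~\ref{lem:JNfrac}), and Carleson embedding to extract $\|b\|_{\BMO^{\gamma,\alpha}}$. The threshold $p(\Xi)$ arises precisely as you describe.

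For the lower bound you correctly identify both the method (awf) and the obstacle (opposite quadrants), but your proposed resolution---multi-scale couplings, a paraproduct-type decomposition, and absorption of errors via the sparse domination from the upper bound---is not what the paper does and, as written, does not contain the idea that actually closes the argument. The paper's resolution is purely geometric and single-scale: given a $\gamma$-cube $Q$, one constructs four sets $Q,W_1,W_2,P$ of comparable measure and auxiliary functions $\psi_{S_i}$ so that $\fint_Q|b-\langle b\rangle_Q|\lesssim|Q|^{-1}\,|\langle[b,H_\gamma]\psi_{S_1},\psi_{S_2}\rangle|$ (Theorem~\ref{thm:osc}). The two new ingredients for the opposite-quadrant case are: (i) the forward and backward awf iterations $Q\to W_1\to P$ and $P\to W_2\to Q$ use \emph{distinct} intermediate sets $W_1\neq W_2$ (in the adjacent case of \cite{Oik2022} one had $W_1=W_2$); and (ii) a foliation ``Key-lemma'' (Lemma~\ref{lem:key}) showing that for suitably chosen distance parameters $A_2\gg A_1$, the curve segments $\phi(y,-,Q)$ with $y\in\phi(z,-,W_1)$ cover all of $Q$ for every $z\in P$. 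This foliation is what forces the awf error $\widetilde f_P$ to have mean zero and $L^\infty$ norm $\lesssim\varepsilon$, with no appeal to the sparse machinery. Two further corrections: the lower bound (Theorem~\ref{thm:lb}) in fact holds for \emph{all} $p,q\in(1,\infty)$, so the restriction $q<p(\Xi)$ enters only through the upper bound and your concern about uniformity of the lower bound across the fractional range is unnecessary; and no ``summing over a scale family'' is involved, since $\BMO^{\gamma,\alpha}$ is a supremum over cubes and Theorem~\ref{thm:osc} delivers the estimate one cube at a time.
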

\begin{thm}\label{thm:main:comp} Under the same restrictions as in Theorem \ref{thm:main:bdd}, we have 
	\begin{align}
		[b,H_{\gamma}] \in \calK(L^p(\R^2), L^q(\R^2)) \Longleftrightarrow
		b\in 
			\VMO^{\gamma,\alpha}(\R^2),
	\end{align}
	where
	 $\calK$ stands for the class of compact operators between the given spaces.
\end{thm}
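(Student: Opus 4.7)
The plan is to establish the two implications separately: sufficiency via norm approximation combined with a single-scale $L^p$-improving decomposition, and necessity via the approximate weak factorization (aWF) already packaged into Theorem \ref{thm:main:bdd}.

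\emph{Sufficiency ($\Leftarrow$).} First characterize $\VMO^{\gamma,\alpha}(\R^2)$ as the $\BMO^{\gamma,\alpha}$-closure of $C_c^\infty(\R^2)$. Given $b\in\VMO^{\gamma,\alpha}$, pick $b_n\in C_c^\infty$ with $\Norm{b-b_n}{\BMO^{\gamma,\alpha}}\to 0$. By Theorem \ref{thm:main:bdd},
\[
\Norm{[b,H_\gamma]-[b_n,H_\gamma]}{L^p\to L^q} \lesssim \Norm{b-b_n}{\BMO^{\gamma,\alpha}} \to 0,
\]
so by closedness of $\calK(L^p,L^q)$ in operator norm it suffices to verify that each $[b_n,H_\gamma]$ is compact. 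Decompose $H_\gamma = \sum_{k\in\Z} H_{\gamma,k}$ into smooth single-scale pieces adapted to $|t|\sim 2^k$. The curvature of $\gamma$ delivers a single-scale $L^p$-improving estimate $\Norm{H_{\gamma,k}}{L^p\to L^q}\lesssim 2^{-k\eta}$ for some $\eta=\eta(p,q)>0$ throughout the admissible range $q\in[p,p(\Xi))$; this is precisely the mechanism behind $p(\Xi)>p$. Consequently $\sum_{|k|>N}[b_n,H_{\gamma,k}]$ has operator norm tending to $0$ as $N\to\infty$. On each of the retained finite range of scales, $[b_n,H_{\gamma,k}]$ is an integral operator with bounded, compactly supported kernel and is thus compact $L^p\to L^q$ by the Fr\'echet--Kolmogorov criterion; summing finitely many compact operators yields the conclusion.

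\emph{Necessity ($\Rightarrow$).} Suppose $T:=[b,H_\gamma]$ is compact and $b\notin\VMO^{\gamma,\alpha}$. Then along some sequence of $\gamma$-adapted rectangles $R_j$ one of the three $\VMO$ regimes fails, i.e., $|R_j|\to 0$, $|R_j|\to\infty$, or $\dist(R_j,0)\to\infty$, while the $\gamma$-oscillations of $b$ on $R_j$ stay bounded below. Applying the aWF from Theorem \ref{thm:main:bdd} tailored to $R_j$ produces normalized $f_j\in L^p$, $g_j\in L^{q'}$, supported near $R_j$ and its $\gamma$-image, with
\[
\abs{\pair{Tf_j}{g_j}} \gtrsim \inf_{c\in\C}\Bigl(\frac{1}{|R_j|}\int_{R_j}\abs{b-c}^\alpha\,dx\Bigr)^{1/\alpha} \gtrsim 1.
\]
In each of the three regimes, the escape of $R_j$ (and of its paired $\gamma$-image) to zero, to infinity, or spatially to infinity forces $f_j\rightharpoonup 0$ weakly in $L^p(\R^2)$ and $g_j\rightharpoonup 0$ weakly in $L^{q'}(\R^2)$. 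Compactness of $T$ gives $\Norm{Tf_j}{L^q}\to 0$, whence $\pair{Tf_j}{g_j}\to 0$, contradicting the lower bound. Therefore $b\in\VMO^{\gamma,\alpha}$.

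\emph{Main obstacle.} The principal difficulty I anticipate lies in the sufficiency step for the strictly fractional range $q>p$: standard equicontinuity arguments dispatch each individual scale for smooth symbols, but obtaining geometric decay across scales in the $L^p\to L^q$ operator norm requires a quantitative single-scale $L^p$-improving of $H_{\gamma,k}$, a genuinely curvature-based ingredient, and one needs the decay rate $\eta$ to be positive for the entire range $q<p(\Xi)$, which is the very feature that defines the threshold. On the necessity side, the delicate point is choosing the aWF test pairs so that they are simultaneously weakly null in all three regimes for every monomial curve---this is where the planar aWF beyond the adjacent-quadrants hypothesis of \cite{Oik2022}, now available through Theorem \ref{thm:main:bdd}, is crucial.
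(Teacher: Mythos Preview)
Your necessity argument is essentially correct and is, if anything, cleaner than the paper's: you exploit that a compact operator on a reflexive space is completely continuous (weakly null goes to norm null), whereas the paper encodes the contradiction into Lemma~\ref{lem:seq} and then works to verify its hypothesis~(ii) in each of the three regimes, which is somewhat laborious in the small-scale case. Your displayed oscillation expression is garbled (the quantity with $|b-c|^{\alpha}$ under the integral is not what the aWF produces; the correct lower bound is $\abs{Q^j}^{-\alpha/\abs{\beta}}\fint_{Q^j}\abs{b-\ave{b}_{Q^j}}$), but this is cosmetic.

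The sufficiency argument, however, has a genuine gap. The asserted single-scale bound $\Norm{H_{\gamma,k}}{L^p\to L^q}\lesssim 2^{-k\eta}$ with a fixed $\eta>0$ is false. Anisotropic rescaling of the unit-scale $L^p$-improving estimate $\Norm{A_1^{\gamma}}{L^p\to L^q}\lesssim 1$ gives exactly
\[
\Norm{H_{\gamma,k}}{L^p\to L^q}\sim 2^{k\abs{\beta}(1/q-1/p)}=2^{-k\alpha},
\]
which is constant in $k$ when $q=p$ and blows up as $k\to-\infty$ when $q>p$; so the tails $\sum_{|k|>N}[b_n,H_{\gamma,k}]$ do not vanish by this mechanism alone. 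One must separately exploit the smoothness of $b_n$ to gain at small scales and the compact support of $b_n$ to gain at large scales and at spatial infinity, and this is where the real work is. In addition, $[b_n,H_{\gamma,k}]$ is not an integral operator with bounded kernel: the kernel of $H_{\gamma,k}$ is a measure supported on a curve, so Fr\'echet--Kolmogorov does not apply for the reason you state. The paper proceeds differently: it splits $H_{\gamma}=H_{\gamma,c}+H_{\gamma,\varepsilon}$ by a simultaneous spatial and scale truncation, proves compactness of $[b,H_{\gamma,c}]$ via Fr\'echet--Kolmogorov using the $L^p$-improving \emph{continuity} estimate $\Norm{A_1^{\gamma}-\tau_yA_1^{\gamma}}{L^r\to L^s}\lesssim\abs{y}^{\eta}$ (Lemmas~\ref{lem:ClaOu2017B} and~\ref{lem:pqcont}), and controls the error $[b,H_{\gamma,\varepsilon}]$ through the sparse domination of the commutator combined with the $\VMO^{\gamma,\alpha}$ conditions (Propositions~\ref{prop:Hc}, \ref{prop:err:sprs} and~\ref{prop:He}).
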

\begin{thm}\label{thm:main:q<p} Let $1<q<p<\infty$ and $1/q=1/r+1/p.$ 
Then, 
\begin{align}
	\Norm{[b,H_{\gamma}]}{L^p(\R^2)\to L^q(\R^2)} < \infty  \Longleftrightarrow b\in\dot L^r(\R^2)\Longrightarrow  [b,H_{\gamma}]\in \calK(L^p(\R^2),L^q(\R^2)).
\end{align}
\end{thm}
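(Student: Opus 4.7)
The plan is to prove the three implications in turn. \emph{Sufficiency for boundedness}, $b\in\dot L^r\Rightarrow\Norm{[b,H_\gamma]}{L^p\to L^q}<\infty$: writing $[b,H_\gamma]f=b\cdot H_\gamma f-H_\gamma(bf)$ and applying H\"older's inequality with $1/q=1/r+1/p$,
\[
\Norm{[b,H_\gamma]f}{L^q(\R^2)}\le \Norm{b}{L^r}\Norm{H_\gamma f}{L^p}+\Norm{H_\gamma}{L^q\to L^q}\Norm{b}{L^r}\Norm{f}{L^p},
\]
which is finite since $H_\gamma$ is bounded on every $L^s(\R^2)$ with $s\in(1,\infty)$. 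Consequently $b\mapsto[b,H_\gamma]$ is continuous from $\dot L^r$ into $\bddlin(L^p,L^q)$, which will be reused in the compactness step.

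\emph{Necessity}, $\Norm{[b,H_\gamma]}{L^p\to L^q}<\infty\Rightarrow b\in\dot L^r$: the plan is to import the approximate weak factorization machinery that the paper develops in the plane for the necessity halves of Theorems \ref{thm:main:bdd} and \ref{thm:main:comp} and rerun it with the fractional H\"older triple $(p,q,r)$. Concretely, for $\varphi\in L^{r'}(\R^2)$ one should construct pairs $(f_k,g_k)$ with $\sum_k\Norm{f_k}{L^p}\Norm{g_k}{L^{q'}}\lesssim\Norm{\varphi}{L^{r'}}$ and $\int_{\R^2} b\,\overline{\varphi}\approx\sum_k\pair{[b,H_\gamma]f_k}{g_k}$, after which duality delivers $\Norm{b}{\dot L^r}\lesssim\Norm{[b,H_\gamma]}{L^p\to L^q}$. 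The only modification of the $q\ge p$ argument here is that the error term arising in the factorization must be absorbed using the genuinely strict inequality $q<p$.

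\emph{Compactness}, $b\in\dot L^r\Rightarrow[b,H_\gamma]\in\calK(L^p,L^q)$, is the truly new step. Since $\calK(L^p,L^q)$ is operator-norm closed in $\bddlin(L^p,L^q)$ and $b\mapsto[b,H_\gamma]$ is continuous in $b$ by the sufficiency step, a density argument reduces matters to $b\in C_c^\infty(\R^2)$. For such $b$ I would avoid a direct Fr\'echet--Kolmogorov verification by chaining two facts already at my disposal: Theorem \ref{thm:main:comp} at the endpoint $q=p$ yields $[b,H_\gamma]\in\calK(L^p,L^p)$ because $C_c^\infty\subset\VMO^{\gamma,\alpha}$, while the sufficiency step produces $[b,H_\gamma]\in\bddlin(L^p,L^{q_0})$ for every $q_0\in(1,q)$ (since $C_c^\infty$-functions lie in all $L^{r_0}$). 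A Cwikel/Krasnoselskii-type interpolation with one compact and one merely bounded endpoint in the target variable then propagates compactness to the intermediate space $[L^p,L^{q_0}]_\theta=L^q$ with $\theta$ chosen so that $1/q=(1-\theta)/p+\theta/q_0$. The very same chain reproves the $\CZO$ result of \cite{HLTY2022}: replace $H_\gamma$ by a $\CZO$ $T$ and $\VMO^{\gamma,\alpha}$ by $\CMO$, and invoke Uchiyama's theorem at the base point $q=p$. The main obstacle I anticipate is selecting a version of the compact-operator interpolation theorem whose hypotheses fit the precise target triple $q_0<q<p$ (and not requiring interpolation in the domain), along with verifying that $C_c^\infty\subset\VMO^{\gamma,\alpha}$ in the curve-adapted sense used in Theorem \ref{thm:main:comp}.
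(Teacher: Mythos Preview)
Your sufficiency argument for boundedness is exactly the paper's. Your necessity sketch points in the right direction but is imprecise: the paper does not rerun the $q\ge p$ factorization with an ``absorbed error''; rather, it invokes the single-cube estimate of Theorem~\ref{thm:osc} and then feeds it into either Hyt\"onen's argument \cite[Section~2.5]{HyLpLq} or the sharp maximal-function method of \cite{HLS2023} (Theorem~\ref{thm:main:q<pB} here), using $\Norm{M^{\gamma,\#}_1 b}{L^r}\sim\Norm{b}{\dot L^r}$ in the unweighted case.

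The compactness step is where you genuinely diverge from the paper. After the common reduction to $b\in C_c^\infty$ and $[b,H_\gamma]\in\calK(L^p,L^p)$ (via $C_c^\infty\subset\VMO^{\gamma,0}$, which is the inclusion \eqref{eq:CMObeta=VMObeta}), the paper does \emph{not} interpolate. It instead writes $H_\gamma=\sum \chi^{a}H_\gamma\chi^{b}$ with $\chi^{0,R}=1_{B_\beta(0,R)}$ and $\chi^{R,\infty}=1-\chi^{0,R}$; choosing $R>\diam(\supp b)$ kills $[b,\chi^{R,\infty}H_\gamma\chi^{R,\infty}]$, and each surviving piece factors as $\chi^{\cdot}\circ[b,H_\gamma]\circ\chi^{\cdot}$ with at least one factor $\chi^{0,R}$. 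H\"older's inequality makes the compactly supported multiplier $\chi^{0,R}$ bounded $L^p\to L^q$ (here is where $q<p$ enters), and the composite of a compact operator with bounded ones is compact. The same three lines reprove the $\CZO$ result of \cite{HLTY2022}.

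Your interpolation route is also valid, and your worry about selecting the right Cwikel/Krasnosel'ski\u{\i} theorem is unnecessary: with the domain fixed at $L^p$, the statement is elementary. If $T:L^p\to L^p$ is compact and $T:L^p\to L^{q_0}$ is bounded, take any bounded $(f_n)\subset L^p$, pass to a subsequence with $Tf_n$ Cauchy in $L^p$, note $(Tf_n)$ is bounded in $L^{q_0}$, and conclude via $\Norm{Tf_n-Tf_m}{L^q}\le\Norm{Tf_n-Tf_m}{L^p}^{\theta}\Norm{Tf_n-Tf_m}{L^{q_0}}^{1-\theta}$. What each approach buys: the paper's localization is entirely self-contained, avoids any interpolation black box, and makes the $\CZO$ reproof a literal copy; your argument is shorter and isolates cleanly the two inputs (compactness at one endpoint, boundedness at another), at the cost of having to justify the interpolation step.
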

When speaking of sufficiency claims, we mean that the testing conditions $\BMO^{\gamma,\alpha},$ $\VMO^{\gamma,\alpha}$ and $\dot L^r$ on the symbol (of the commutator) are sufficient to conclude boundedness or compactness (of the commutator); necessity goes the other way around, from boundedness or compactness, we conclude these necessary testing conditions on the symbol.  
Concerning Theorems \ref{thm:main:bdd} and \ref{thm:main:comp}, we prove all of the sufficiency claims in arbitrary dimensions $n\geq 2$, and only for this direction we need to assume $q<p(\Xi).$ 
Concerning Theorem \ref{thm:main:q<p}, we obtain that $b\in \dot L^r(\R^n)$ is sufficient for compactness for all $n\geq 2.$ 
In the other direction, and now concerning all three Theorems \ref{thm:main:bdd}, \ref{thm:main:comp} and \ref{thm:main:q<p}, we obtain the necessity claims in the two-weight Bloom setting, although then we must currently restrict to the plane $n=2.$

There are some natural research questions that we have not settled in this article and that we think merit to be carefully studied on their own. As we go, we introduce these in Section \ref{sect:mainresults} below.

\subsection{Notation} Before going into details we recall basic facts about $\R^n$ with anisotropic dilations and fix notational conventions.
\begin{itemize}
	\item With $\lambda>0,$ $\beta\in\R_+^n$ and $x\in\R^n,$ we denote the anisotropic dilation with
	$$
	\lambda^{\beta}x = (\lambda^{\beta_1}x_1,\dots,\lambda^{\beta_n}x_n).
	$$ 
	Let $r>0$ be the unique solution of the equation $\abs{r^{-\beta}x} = 1$ and define
	\[
	\abs{x}_{\beta} :=r.
	\]
	Alternatively, we can use any equivalent quasi-metric,
	\begin{align}\label{eq:quasinorms}
		\abs{x}_{\beta}\sim \sum_{j=1}^n\abs{x_j}^{1/\beta_j}\sim \max_{j=1,\dots, n}\abs{x_j}^{1/\beta_j}.
	\end{align}
	\item Given a set $E\subset\R^n$ and a specified centre point $c_E\in\R^n,$ we define
	$$
	\lambda^{\beta}E = \lambda^{\beta}_{c_E}E = \big\{ c_E + \lambda^{\beta}(x-c_E): x\in E\big\},
	$$
	 the centric dilation.
It will always be clear from the context what the centre of dilation is, and when we dilate a set with respect to the origin we notate $\lambda^{\beta}_{0}E.$ 
	\item We denote the $\beta$-ball with centre $c$ and radius $r$ with  
	\[
	B_{\beta}(c,r) := r^{\beta}B(c,1) = \{ x\in\R^n: \abs{x-c}_{\beta}<r\}.
	\]
	We let $\calR^{\beta}$ denote the collection of all $\beta$-cubes,  i.e. rectangles
	$
	Q=I_1\times\dots\times I_n
	$
	 parallel to the coordinate axes such that
	\begin{align*}
		\ell(Q) := \ell(I_1)^{1/\beta_1}=\dots=\ell(I_n)^{1/\beta_n}.
	\end{align*}
	Notice that $\abs{Q} = \ell(Q)^{\abs{\beta}},$ where  $\abs{\beta} := \sum_{j=1}^n\beta_j.$
	
	We denote with $Q_{\beta}(x,r)$ the unique $\beta$-cube with centre $x$ and sidelength $r.$ Then, cubes and balls are comparable
	\[
	B_{\beta}(x,r)\subset Q_{\beta}(x,r)\subset B_{\beta}(x,Cr),\qquad \abs{B_{\beta}(c,r)}\sim	\abs{Q_{\beta}(x,r)} =  r^{\abs{\beta}},
	\]
	for some absolute constant $C>0$. 
	\item 	
	If $\beta$ is associated to a monomial curve $\gamma,$ we call $\beta$-objects $\gamma$-objects. e.g. $\calR^{\beta}= \calR^{\gamma}.$
	\item Whenever $\beta = (1,\dots,1)$ or $w = 1$ or $\alpha = 0,$ we freely drop these symbols from the notation, e.g. $\VMO^{(1,\dots,1),0}_1 = \VMO.$ 
	\item We denote averages as
	$$
	\langle f \rangle_A = \fint_A f= \frac{1}{|A|} \int_A f,\qquad \ave{f}_{A,t} = \ave{\abs{f}^t}_A^{1/t},
	$$
	where $|A|$ denotes the Lebesgue measure of the set $A$.
	\item We denote $A \lesssim B$, if $A \leq C B$ for some constant $C>0$ depending only on the dimension of the underlying space, on integration exponents, on sparse constants, on auxiliary functions, and on other absolute factors that we do not care about.
	Then  $A \sim B$, if $A \lesssim B$ and $B \lesssim  A.$ Moreover, subscripts on constants ($C_{a,b,c,...}$) and quantifiers ($\lesssim_{a,b,c,...}$) signify their dependence on those subscripts. 	
\end{itemize} 

\subsection{Main results in detail}\label{sect:mainresults} 
We prove some of our results in weighted settings and thus into the definitions we introduce weights, i.e. positive locally integrable functions $w:\R^n\to\R_+.$ 
	Let $\alpha\in\R$ and define 
	$$\BMO_{w}^{\gamma, \alpha}= \BMO_{w}^{\beta, \alpha}$$ through the semi-norm
	\begin{align}
		\Norm{b}{\BMO_{w}^{\beta, \alpha}} = \sup_{Q\in\calR^{\beta}}\calO_{w}^{\beta,\alpha}(b;Q),\quad \calO_{w}^{\beta,\alpha}(b;Q) = w(Q)^{-\alpha/\abs{\beta}}\Big(\frac{1}{w(Q)}\int_Q\abs{b-\ave{b}_Q}\Big),
	\end{align}
where $w(Q)=\int_Qw.$
Bongers, Guo, Li and Wick \cite{BGLW2021} obtained the upper bound
\begin{align}\label{eq:BGLW}
	\Norm{[b,H_{\gamma}]}{L^p(\R^n)\to L^p(\R^n)} \lesssim \Norm{b}{\BMO^{\gamma}(\R^n)},\qquad \BMO^{\gamma}= \BMO_{1}^{\gamma,0}
\end{align}
through the Cauchy integral argument by blackboxing the sparse form domination of $H_{\gamma}$ and the resulting weighted bounds from Cladek, Ou \cite{ClaOu2017}.
We give an alternative proof of \eqref{eq:BGLW} by providing a sparse form domination of the commutator, which also allows us to include genuinely fractional cases. The set of admissible exponents $q\geq p$ is
$$
(1/p,1/q)\in  \Xi(n)  \subset (0,1)^2,
$$
where
\begin{align*}
	\Xi(n) &=\op{int}\Omega(n)\cup \big\{\big(t,t\big): t\in (0,1)\big\}, \\
	\Omega(n) &= \overline{\operatorname{conv}}\left\{ (0,0), (1,1), \Big( \frac{n^2-n+2}{n^2+n},\frac{n-1}{n+1}\Big),  \Big( \frac{2}{n+1} \frac{2n-2}{n^2+n}\Big)  \right\},
\end{align*}
where $\overline{\operatorname{conv}}$ is the closed convex hull of the specified points, see Figure \ref{fig:ExpXi}.
\begin{thm}\label{thm:UPfrac}  Let $\gamma:\R\to\R^n$ be a monomial curve, let $(1/p,1/q)\in \Xi(n)$ and $\alpha/\abs{\beta} = 1/p-1/q.$ Then,
\begin{align}\label{eq:bound:up}
	\Norm{[b,H_{\gamma}]}{L^p(\R^n)\to L^q(\R^n)} \lesssim \Norm{b}{\BMO^{\gamma,\alpha}(\R^n)}.
\end{align}
\end{thm}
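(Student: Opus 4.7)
The plan is to establish a fractional bilinear sparse form domination for $[b,H_\gamma]$ and then convert it into the desired $L^p\to L^q$ estimate via a John--Nirenberg inequality for $\BMO^{\gamma,\alpha}$.

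The starting point is a family of bilinear sparse form dominations for $H_\gamma$ itself, of the shape
\[
|\pair{H_\gamma f}{g}|\lesssim \sum_{Q\in\mathcal{S}} |Q|\,\ave{f}_{Q,s_1}\ave{g}_{Q,s_2},
\]
where $\mathcal{S}$ is a sparse collection of $\gamma$-cubes and the admissible pairs $(1/s_1,1/s_2)$ range over the polygon $\Omega(n)$ of Figure~\ref{fig:ExpXi}. The vertices of $\Omega(n)$ are dictated by the quantitative single-scale $L^{s_1}\to L^{s_2'}$ improving estimates for the truncated pieces of $H_\gamma$, which along monomial curves are of Stein--Wainger type; feeding them into a standard grand maximal truncation / stopping-time scheme, as in Cladek--Ou \cite{ClaOu2017}, produces sparse domination at each admissible exponent pair.

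Next I upgrade to the commutator through the by now standard iteration trick: for each principal cube $Q$ in the stopping tree I split $b = (b-\ave{b}_Q) + \ave{b}_Q$, use that constants commute with $H_\gamma$, and recurse on stopping children to arrive at
\[
|\pair{[b,H_\gamma]f}{g}|\lesssim \sum_{Q\in\mathcal{S}}|Q|\,\ave{(b-\ave{b}_Q)f}_{Q,s_1}\ave{g}_{Q,s_2} + \text{symmetric term in $g$}.
\]
The exponents $(s_1,s_2)$ here need to sit slightly inside $\Omega(n)$, in order to leave room for the perturbation used in the next step. This is exactly the reason $\Xi(n)$ takes only the interior of $\Omega(n)$ off the diagonal, whereas on the diagonal $q=p$ the argument degenerates and recovers the Bongers--Guo--Li--Wick bound \eqref{eq:BGLW}.

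Finally, John--Nirenberg for $\BMO^{\gamma,\alpha}$, i.e.
\[
\ave{|b-\ave{b}_Q|^t}_Q^{1/t}\lesssim_t \Norm{b}{\BMO^{\gamma,\alpha}(\R^n)}\,|Q|^{\alpha/|\beta|}\qquad(t\geq 1),
\]
together with H\"older's inequality, lets me pull out $\Norm{b}{\BMO^{\gamma,\alpha}}$ from the bilinear sum and reduce to a fractional sparse form
\[
\sum_{Q\in\mathcal{S}} |Q|^{1+\alpha/|\beta|}\,\ave{f}_{Q,\tilde s_1}\ave{g}_{Q,\tilde s_2},\qquad \tilde s_1 > s_1,\ \tilde s_2 > s_2,
\]
which is known to be $L^p\to L^q$ bounded whenever $\tilde s_1 < p$, $\tilde s_2 < q'$ and $\alpha/|\beta| = 1/p-1/q$. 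The condition $(1/p,1/q)\in\Xi(n)$ is precisely what guarantees that one can find such a perturbation $(\tilde s_1,\tilde s_2)$ still inside $\Omega(n)$. The main obstacle is the first step: establishing sparse form domination for $H_\gamma$ across the whole polygon $\Omega(n)$, with the quantitative behaviour in $(s_1,s_2)$ needed to absorb the small loss incurred when inserting $b$ and applying John--Nirenberg. Once this extension of the Cladek--Ou argument is in place, Steps~2 and~3 follow from well-established commutator and sparse form technology.
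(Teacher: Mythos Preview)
Your proposal is correct and follows essentially the same route as the paper: sparse form domination of the commutator (the paper's Theorem~\ref{thm:SDOM}, obtained from Cladek--Ou plus the standard Lerner--Ombrosi--Rivera-R\'ios iteration), then H\"older with John--Nirenberg (Lemma~\ref{lem:JNfrac}) to extract $\Norm{b}{\BMO^{\gamma,\alpha}}$ and reduce to a fractional sparse form, which is then bounded via sparseness and the maximal function. The obstacle you flag in Step~1 is not a real one---Cladek--Ou already delivers the sparse domination of $H_\gamma$ across all of $\Omega(n)$, and the upgrade to the commutator is exactly the standard routine the paper invokes without further comment; also note that only the exponent on the side carrying $b$ needs to be perturbed, not both.
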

To connect Theorems \ref{thm:main:bdd} and \ref{thm:UPfrac} we define
\begin{align*}
	p(\Xi(n)) = \sup \{q\geq p: (1/p,1/q)\in\Xi(n)\}.
\end{align*}
The proof of Theorem \ref{thm:UPfrac} is based on the sparse form domination of the commutator.
\begin{thm}\label{thm:SDOM}  Let $\gamma:\R\to\R^n$ be a monomial curve and
	$\big(1/r, 1/s'\big)\in \Omega(n).$ 
	Then, for all $f,g\in L^{\infty}_c(\R^n),$ there exists a sparse collection $\mathscr{S}^{\gamma} \subset \calR^{\gamma}$ such that 
	\[
	\babs{\bave{[b,H_{\gamma}]f,g}}\lesssim 	A_{b}^{r,s}[\mathscr{S}^{\gamma}](f,g)  + 	A_{b}^{r,s*}[\mathscr{S}^{\gamma}](f,g),
	\]
	where
	\begin{equation}\label{eq:SFORMS}
		\begin{split}
			A_{b}^{r,s}[\mathscr{S}^{\gamma}](f,g) &= \sum_{Q\in \mathscr{S}^{\gamma}} \bave{(b-\ave{b}_Q)f}_{Q,r}\bave{g}_{Q,s}\abs{Q}, \\
			A_{b}^{r,s*}[\mathscr{S}^{\gamma}](f,g) &= \sum_{Q\in \mathscr{S}^{\gamma}}\bave{f}_{Q,r}\bave{(b-\ave{b}_Q)g}_{Q,s}\abs{Q}.
		\end{split}
	\end{equation}
\end{thm}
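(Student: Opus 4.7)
The plan is to upgrade the sparse form domination of $H_\gamma$ itself, due to Cladek and Ou, to a sparse domination of the commutator via the by-now standard recipe of Lerner--Ombrosi--Rivera-R\'ios, adapted to rough operators as in Conde-Alonso--Culiuc--Di Plinio--Ou. Cladek and Ou give: for $(1/r,1/s')\in\Omega(n)$ and $f,g\in L^\infty_c(\R^n)$, there is a $\gamma$-sparse collection $\scrS\subset\calR^{\gamma}$ such that
\[
|\bave{H_\gamma f, g}| \lesssim \sum_{Q\in\scrS}\bave{f}_{Q,r}\bave{g}_{Q,s'}|Q|.
\]
The local commutator identity
\[
[b,H_\gamma]f = (b-\ave{b}_Q)H_\gamma f - H_\gamma\bigl((b-\ave{b}_Q)f\bigr),
\]
valid on every $\gamma$-cube $Q$, shows that on $Q$ the commutator splits into two $H_\gamma$-type pieces with the oscillation $b-\ave{b}_Q$ placed either on $f$ or on $g$; the two contributions then produce the two symmetric sparse forms in \eqref{eq:SFORMS}.

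I would then run a principal tree construction. After a $3^n$-translation pigeonhole reducing to $f,g$ supported in a single large $\gamma$-cube $Q_0$, build $\scrS^\gamma\ni Q_0$ recursively by declaring $\ch(Q)$ to be the maximal $\gamma$-subcubes of $Q$ on which at least one of the following fails:
\[
\bave{|b-\ave{b}_Q|}_{Q',t} \leq C\,\bave{|b-\ave{b}_Q|}_{Q,t}, \qquad t\in\{r,s\},
\]
together with upper bounds on a $\gamma$-adapted maximal truncation $H_{\gamma,*}$ of $H_\gamma$, applied to $f$, $g$, $(b-\ave{b}_Q)f$ and $(b-\ave{b}_Q)g$, by suitable $Q$-averages in $L^r$ or $L^{s'}$. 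Choosing $C$ large relative to the weak-type bounds of $H_{\gamma,*}$ on $\Omega(n)$ (implicit in Cladek--Ou) and to a local John--Nirenberg inequality for the anisotropic oscillation ensures $|\bigcup\ch(Q)| \leq \tfrac{1}{2}|Q|$, so $\scrS^\gamma$ is sparse. On the complement $E_Q = Q\setminus\bigcup\ch(Q)$, the local commutator identity and the Cladek--Ou sparse form, applied to the pairs $((b-\ave{b}_Q)f\mathbf{1}_Q, g\mathbf{1}_Q)$ and $(f\mathbf{1}_Q, (b-\ave{b}_Q)g\mathbf{1}_Q)$, reduce the entire local contribution to
\[
\bave{(b-\ave{b}_Q)f}_{Q,r}\bave{g}_{Q,s'}|Q| + \bave{f}_{Q,r}\bave{(b-\ave{b}_Q)g}_{Q,s'}|Q|,
\]
the stopping rule guaranteeing that the sparse subtrees produced by Cladek--Ou are absorbed into the single $Q$-term. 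Summing over $Q\in\scrS^\gamma$ then yields \eqref{eq:SFORMS}.

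The main obstacle is isolating the right anisotropic maximal truncation of $H_\gamma$ and establishing its weak $L^r$, $L^{s'}$ endpoints throughout the closed region $\Omega(n)$, with constants uniform enough to drive the geometric decay in the stopping iteration; this is essentially present in Cladek--Ou but must be reformulated in a shape directly usable to drive the commutator principal tree, rather than $H_\gamma$'s own. Modulo this reformulation and the bookkeeping of $\gamma$-cubes in the anisotropic geometry, the argument follows a now-standard template.
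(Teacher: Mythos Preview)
Your proposal is correct and follows exactly the route the paper takes: the paper itself omits all details, simply pointing to the Cladek--Ou sparse machinery (via Lemma~\ref{lem:ClaOu2017A} and the decomposition \eqref{eq:Hsplit}) together with the Lerner--Ombrosi--Rivera-R\'ios commutator recipe \cite{LOR1}, which is precisely the template you outline. One small remark: your ``main obstacle'' concerning an anisotropic maximal truncation of $H_\gamma$ is not actually needed, since Lemma~\ref{lem:ClaOu2017A} already packages the required local estimate for the $A_Q$-pieces directly in terms of the stopping averages $\ave{f}_{Q,r}$, $\ave{g}_{Q,s}$, so the principal-tree iteration runs without ever invoking a separate weak-type bound for a grand maximal operator.
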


For the converse of Theorem \ref{thm:UPfrac}, we already know the matching lower bound for those monomial curves that intersect adjacent quadrants of the plane.
\begin{thm}[\cite{Oik2022}]\label{thm:adjacent} Let $\gamma:\R\to\R^2$ be a monomial curve with parts in adjacent quadrants., i.e. $\vare_i=\delta_i$ for exactly one index $i\in\{1,2\}.$  Let $p\leq q$ and $\alpha/\abs{\beta} = 1/p-1/q.$ Then,
	\begin{align*}
		\Norm{b}{\BMO^{\gamma,\alpha}(\R^2)} \lesssim \Norm{[b,H_{\gamma}]}{L^p(\R^2)\to L^q(\R^2)}.
	\end{align*}
\end{thm}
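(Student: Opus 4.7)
The plan is to establish the lower bound via an \emph{approximate weak factorization} argument, applied locally at each $\gamma$-cube $Q\in\calR^{\gamma}$. I would fix such a $Q$ with sidelength $r=\ell(Q)$, replace the mean $\ave{b}_Q$ by the median $m$ of $b$ over $Q$ (using $\int_Q\abs{b-\ave{b}_Q}\sim \int_Q\abs{b-m}$), and split $Q=Q^+\sqcup Q^-$ according to the sign of $b-m$, so that $\abs{Q^{\pm}}\geq\abs{Q}/2$. The goal is to exhibit test functions $f\in L^p$ and $g\in L^{q'}$ whose supports and sign structure, by duality against $[b,H_{\gamma}]$, recover a multiple of $\abs{Q}^{-\alpha/\abs{\beta}}\fint_Q\abs{b-m}$.

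The crucial step is the construction of an \emph{away cube} $\tilde{Q}\in\calR^{\gamma}$ of sidelength comparable to $r$ and at anisotropic distance $\sim r$ from $Q$, chosen so that the kernel $K(x-y)$ of $H_{\gamma}$ has a definite sign and magnitude $\sim r^{-1}$ uniformly for $(x,y)\in Q\times\tilde{Q}$. Concretely, one picks a parameter $t_0\sim r$ such that $c_Q+\gamma(t_0)$ lies at the centre of $\tilde{Q}$; the adjacent quadrants hypothesis $\vare_i=\delta_i$ for exactly one index $i\in\{1,2\}$ is precisely what grants enough directional flexibility in $\gamma(\R)$ to realize such a $t_0$ (and a second one of opposite sign for the swap below) while keeping $K$ non-cancelling on $Q\times\tilde Q$. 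With this in hand, I would take $f=1_{\tilde Q}$ and $g=\sign(b-m)1_Q$ and expand
\begin{align*}
\bave{[b,H_{\gamma}]f,g}=\iint (b(x)-b(y))K(x-y)f(y)g(x)\ud y\,\ud x,
\end{align*}
performing a symmetric two-cube swap that eliminates the contribution of $\ave{b}_{\tilde Q}$ to obtain $\babs{\bave{[b,H_{\gamma}]f,g}}\gtrsim r^{-1}\abs{\tilde Q}\int_Q\abs{b-m}$. On the other side, H\"older duality gives $\babs{\bave{[b,H_{\gamma}]f,g}}\leq \Norm{[b,H_{\gamma}]}{L^p\to L^q}\cdot\abs{\tilde Q}^{1/p}\abs{Q}^{1/q'}$, and combining these with $\abs{Q}=r^{\abs{\beta}}$ and $\alpha/\abs{\beta}=1/p-1/q$ rearranges to the claim.

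The hardest step is the away-cube construction: since the Hilbert kernel $1/t$ is odd in the curve parameter, a naive displacement risks a cancellation between the $t>0$ and $t<0$ contributions, and under an opposite-quadrants configuration of $\gamma$ the available signs conspire to destroy the lower bound. The adjacent quadrants hypothesis provides a persistent sign asymmetry in one coordinate that decouples the two halves of the parameter line, so that one can realize non-cancelling displacements in both required directions; extending this step to arbitrary monomial curves is precisely the technical refinement the present paper undertakes in removing the adjacency hypothesis.
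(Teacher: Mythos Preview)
Your proposal contains a fundamental gap: you treat $H_\gamma$ as if it carried a Calder\'on--Zygmund kernel $K(x-y)$ that is a \emph{function} on $\R^2$ with definite sign and size $\sim r^{-1}$ on $Q\times\tilde Q$. It does not. The convolution kernel of $H_\gamma$ is the pushforward of the principal-value measure $\ud t/t$ under $t\mapsto\gamma(t)$, a distribution supported on the one-dimensional set $\gamma(\R)\subset\R^2$; for a generic pair $(x,y)\in Q\times\tilde Q$ there is no $t$ with $y=x-\gamma(t)$, and your four-dimensional integral $\iint(b(x)-b(y))K(x-y)f(y)g(x)\ud y\,\ud x$ is not meaningful. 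The correct pairing is
\begin{align*}
\bave{[b,H_\gamma]1_{\tilde Q},g}=\int_Q g(x)\int_{I(x,-,\tilde Q)}\big(b(x)-b(x-\gamma(t))\big)\frac{\ud t}{t}\,\ud x,\qquad I(x,-,\tilde Q)=\{t:x-\gamma(t)\in\tilde Q\},
\end{align*}
and the whole difficulty is to control the length, sign and $x$-dependence of the intervals $I(x,-,\tilde Q)$; this is exactly the content of the foliation lemmas in \cite{Oik2022} and in Section~\ref{sect:LB} of the present paper (Lemmas~\ref{lem:key}--\ref{lem:geom2}), and it cannot be bypassed by asserting a pointwise kernel bound. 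A sanity check confirms the mis-modelling: your asserted lower bound $r^{-1}\abs{\tilde Q}\int_Q\abs{b-m}$, combined with the H\"older upper bound $\abs{\tilde Q}^{1/p}\abs{Q}^{1/q'}$, yields $r^{-1-\alpha}\int_Q\abs{b-m}\lesssim\Norm{[b,H_\gamma]}{L^p\to L^q}$, which is off from the target $r^{-\abs{\beta}-\alpha}\int_Q\abs{b-m}$ by the factor $r^{\abs{\beta}-1}$ and hence fails for small cubes.

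Separately, what you outline is not the approximate weak factorization of \cite{Oik2022} but a variant of the median method \cite{LOR1}. The awf argument instead writes $\int_Q\abs{b-\ave{b}_Q}=\int bf$ with $\int f=0$ and expands $f$ through the iterated quotients \eqref{A1}--\eqref{A2}; the adjacent-quadrants hypothesis enters via the construction of the intermediate sets $W_1,W_2,P$ and the uniform estimates $\abs{I(x,+,W_1)}\sim\ell(Q)$ (Lemma~\ref{lem:geom1}) and the covering property of Lemma~\ref{lem:key}, not via a pointwise sign of a kernel on an away cube.
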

Already from Theorems \ref{thm:UPfrac} and \ref{thm:adjacent} we see a discrepancy in the obtained upper and lower bounds.
The specific form of $\Xi(n)$ stems from the fact that for these exponents we have access to a sparse form domination, which in turn is determined by the availability of smoothing estimates for single scale averaging operators. We do not know if $\Xi(n)$ is maximal with respect to the sufficiency results, but we know from \cite[Section 4.2.]{ClaOu2017} that $\Omega(n)$ is sharp with respect to obtaining a sparse form domination of $H_{\gamma}.$ By the heuristic principle that commutators of singular integrals are more singular than the singular integrals themselves, we cannot expect a better sparse form domination for the commutator than for the singular integral. However, this does not exclude the possibility that $\Xi(n),$ respectively $p(\Xi(n)),$ might not be maximal for the upper bound \eqref{eq:bound:up}.

\begin{que}\label{que:1} Let $p\in (1,\infty),$ $n\geq 2$ and $\beta\in\R_+^n$ be fixed (i.e. $\gamma$ is fixed).
Determine the maximal interval
	\begin{align}\label{eq:que:1}
		 [p,p_{\op{max}}(n,\beta)),\qquad   p_{\op{max}}(n,\beta) := \sup\{q \geq p: \eqref{eq:bound:up}\, \mbox{holds} \}.
	\end{align}
Does there hold that $p_{\op{max}}(n,\beta)=\infty?$
\end{que}

Now we turn to necessity.
One of the two main approaches to commutator lower bounds is through the approximate weak factorization (awf) argument, attributable to Uchiyama \cite{Uch1981} and Hyt\"{o}nen \cite{HyLpLq}; the other approach is known as the median method, see e.g. Lerner, Ombrosi and Rivéra-Rios \cite{LOR1}.
In detail, the main result in \cite{Oik2022} was the awf argument for those monomial curves that intersect adjacent quadrants of the plane.
In this article, we obtain the awf argument for all monomial curves in the plane, thus including the case where $\gamma(\R)\subset \R^2$ intersects opposite quadrants. Consider the parabolic curves
\begin{align}\label{eq:adjopp}
	\gamma_{\op{adj}}(t) = (t,t^2),\qquad \gamma_{\op{opp}}(t) = (t,\sign(t)t^2)
\end{align}
representative of the two cases.
 A nicety is that the proof works simultaneously for both $\gamma_{\op{adj}}$ and $\gamma_{\op{opp}}$ without resulting in cases. We now take some time to discuss a corollary of the proof, a local off-support norm for $\BMO^{\gamma}(\R^2),$ Theorem \ref{thm:osc} below. 
 Its importance lies with the fact that through it all necessity claims we prove for both boundedness and compactness follow by arguments that do not anymore have to directly see the structure of the operator; moreover, its locality allows rescaling with powers of doubling weights and thus it is as easy to obtain necessity claims in many weighted settings as in the unweighted setting.

For a bounded connected set $E\subset \R^2,$ we now specify the centre point of $E$ to be $c_{E} = (c_{\pi_1(E)},c_{\pi_2(E)}),$ where $\pi_i(x) = x_i$ is the $i$th projection and $c_{\pi_1(E)}$ is the centre point of the interval $\pi_1(E).$ 
\begin{thm}\label{thm:osc}  Let $\gamma:\R\to\R^2$ be a monomial curve with a parameter tuple $\beta.$ Let $Q\in\calR^{\gamma}.$ Then, there exist bounded connected sets
	$W_1,W_2,P \subset \R^2$ and absolute constants $\Lambda > \lambda > 1$ so that for distinct $S_1,S_2\in\{Q,W_1,W_2,P\}$ there holds that
	\begin{align}\label{eq:osc1}
		\abs{S_1}\sim \abs{S_2},\qquad \lambda^{\beta}_{c_{S_1}} S_1 \cap S_2=\emptyset,\qquad S_2\subset \Lambda^{\beta} S_1,
	\end{align}
	and there exist functions $\psi_{S_1},\psi_{S_2}$ so that 
	\begin{align}\label{eq:osc2}
		\abs{\psi_{S_i}}\lesssim 1_{S_i},\qquad i=1,2,
		\end{align}
	and
	\begin{align}\label{eq:osc3}
		\fint_Q\babs{b-\ave{b}_Q}\lesssim \frac{1}{\abs{Q}}\babs{\bave{[b,H_{\gamma}]\psi_{S_1},\psi_{S_2}}} \lesssim 	\fint_{\Lambda^{\beta} Q}\babs{b-\ave{b}_{\Lambda^{\beta}Q}}.
	\end{align}
Moreover, all the constants above are independent of $Q\in \calR^{\gamma}.$
\end{thm}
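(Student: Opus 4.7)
The theorem combines two inequalities in \eqref{eq:osc3}. The right one is an off-support estimate that falls out directly of \eqref{eq:osc1}. The left one is an approximate weak factorization (awf) estimate in the spirit of \cite{Uch1981,HyLpLq,Oik2022}; the novelty here, and the entire purpose of the theorem, is that a single four-set configuration $(Q,W_1,W_2,P)$ handles both $\gamma_{\op{adj}}$ and $\gamma_{\op{opp}}$, so that the adjacent-quadrant hypothesis of \cite{Oik2022} can be dropped. The plan is (i) to construct $W_1,W_2,P$ geometrically, (ii) to deduce the upper bound essentially for free, and (iii) to select test bumps $\psi_{S_i}$ that witness the lower bound via awf.

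\textbf{Upper bound.} Expanding via the integral representation of $H_\gamma$,
\[
\bave{[b,H_\gamma]\psi_{S_1},\psi_{S_2}}=\int\!\!\int \big(b(x)-b(x-\gamma(t))\big)\psi_{S_1}(x-\gamma(t))\psi_{S_2}(x)\,\tfrac{\ud t}{t}\,\ud x,
\]
the containments $S_1,S_2\subset\Lambda^\beta Q$ in \eqref{eq:osc1} permit subtracting $\ave{b}_{\Lambda^\beta Q}$ from both occurrences of $b$. The off-support condition $\lambda^{\beta}_{c_{S_1}}S_1\cap S_2=\emptyset$ together with $S_1\cup S_2\subset\Lambda^\beta Q$ pins $|\gamma(t)|_\beta$ to a multiplicative band of width $\sim 1$ around $\ell(Q)$, over which $\int|\ud t/t|\lesssim 1$. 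Fubini then gives the desired bound by $|Q|\cdot\fint_{\Lambda^\beta Q}|b-\ave{b}_{\Lambda^\beta Q}|$.

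\textbf{Lower bound: geometric construction.} By the anisotropic rescaling $x\mapsto \ell(Q)^\beta x$, we may work with $Q=Q_\beta(x_0,1)$ at a base point of our choosing. The plan is to place the remaining sets as translates $W_i=Q+v_{W_i}$ and $P=Q+v_P$ with $|v_\bullet|_\beta\sim 1$, chosen so that
\[
v_{W_1}-v_P=\gamma(t_1),\qquad v_{W_2}-v_P=\gamma(t_2),
\]
for scalars $t_1,t_2$ of opposite signs and $|t_i|\sim 1$. This guarantees that $H_\gamma\psi_P$ has nontrivial mass on each $W_i$ coming from the range $t\approx t_i$. The vectors $0,v_P,v_{W_1},v_{W_2}$ are then spread out enough that every pair drawn from $\{Q,W_1,W_2,P\}$ satisfies \eqref{eq:osc1} within a common $\Lambda^\beta Q$. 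For $\gamma_{\op{opp}}$ the two branches $\gamma|_{t>0}$ and $\gamma|_{t<0}$ are reflections of each other through the origin; the naive choice $v_P=0$ of \cite{Oik2022} would make $W_1,W_2$ mirror images and cause the bilinear pairing below to cancel. The fix is to take $v_P$ off the image of $\gamma$ through $0$, which breaks the symmetry and is the key geometric point.

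\textbf{awf cancellation and main obstacle.} Taking $\psi_P\approx 1_P$ and each $\psi_{W_i}$ a smooth bump with sign aligned to the sign of $\ud t/t$ near $t_i$, the pairing $\bave{[b,H_\gamma]\psi_P,\psi_{W_1}+\psi_{W_2}}$ expands, after the change of variables $y=x-\gamma(t)$ localised to $t\approx t_i$, into
\[
\int_P (b(y)-c)\,\tau(y)\,\ud y+(\text{error}),
\]
where $|\tau|\gtrsim 1$ on $P$, the constant $c$ is a suitable average of $b$, and the error is controlled by the upper bound of \eqref{eq:osc3} already established. Choosing the unimodular factor of $\tau$ to match $\overline{b-\ave{b}_P}$ produces $\fint_P|b-\ave{b}_P|$, and the triangle inequality together with $|P|\sim|Q|$, $P\subset\Lambda^\beta Q$ upgrades this to $\fint_Q|b-\ave{b}_Q|$ at the cost of an additional term absorbed by the upper bound. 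The main obstacle is the quantitative uniformity: one must verify, with absolute constants independent of $Q$ and of the type of $\gamma$, that the awf main term strictly dominates the off-support error in both the adjacent and opposite cases. This is precisely the step that removes the hypothesis of \cite{Oik2022}.
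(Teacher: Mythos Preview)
Your upper-bound argument is fine and matches the paper. The lower bound has a genuine gap at the absorption step. Your single pairing $\langle[b,H_\gamma]\psi_P,\psi_{W_1}+\psi_{W_2}\rangle$ produces at best the oscillation $\fint_P|b-\langle b\rangle_P|$, not $\fint_Q|b-\langle b\rangle_Q|$, and the claimed ``upgrade'' via triangle inequality fails: oscillation on $Q$ is not controlled by oscillation on a disjoint cube $P$ of comparable size (take $b$ oscillatory on $Q$ and constant on $P$). Likewise, saying the error is ``controlled by the upper bound already established'' means it is $\lesssim\fint_{\Lambda^\beta Q}|b-\langle b\rangle_{\Lambda^\beta Q}|$, which again cannot be absorbed into $\fint_Q|b-\langle b\rangle_Q|$. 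Note also that $Q$ does not even appear in your pairing, so there is no mechanism tying the commutator back to the oscillation on the given cube.

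The paper closes the loop by a \emph{two-step} awf iteration. Starting from the dualizing $f$ on $Q$ as in \eqref{eq:dualize}, one factors $f$ into commutator terms passing through $W_1$ plus a remainder $\wt f_P$ on $P$ with $|\wt f_P|\lesssim\varepsilon 1_P$ and $\int\wt f_P=0$; then one factors $\wt f_P$ \emph{again}, through a different set $W_2$, obtaining a remainder back on the \emph{original} $Q$ of size $\lesssim\varepsilon^2 1_Q$ and mean zero, which is now of the same form as $f$ and is absorbed into the left-hand side. The smallness $\varepsilon$ comes from a separation parameter $A_1\to\infty$, not from any off-support bound. Two ingredients you omit are essential for this: the sets $W_i$ are \emph{not} translates of $Q$ but intersections $\wt Q\cap\wt P$ of regions swept by $\gamma$ from $Q$ and from $P$ (this geometry drives a foliation ``Key-lemma'' that produces the $\varepsilon$-decay), and the auxiliary bump on $W_1$ cannot be taken $\sim 1_{W_1}$ but must be damped, roughly $|g_{W_1}|\sim 1_{W_1}|H_\gamma^*g_P|$, to compensate the vanishing of $H_\gamma^*g_P$ toward the boundary of $W_1$.
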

The upper bound on the line \eqref{eq:osc3} is straightforward and not needed for the main results of this article, but it is interesting that the lower bound can be reversed up to a constant dilation. The argument we give for Theorem \ref{thm:osc} is similar to that in \cite{Oik2022} and is based on finding foliations of $\gamma$-cubes by families of cut segments of $\gamma$. Without verifying the details, it seems that a similar argument should work in higher dimensions $n\geq 3,$ however as the present argument is already quite involved in the plane $n=2$, 
it would be interesting to find an alternative proof
that would not to such a great extent depend on the exact foliations present in the current argument.

\begin{que}\label{que:2}  Obtain a version of Theorem \ref{thm:osc} for $n\geq 3$ and by arguments similar to those in this article extend all the necessity results, discussed below, to dimensions $n\geq 3.$
\end{que}

We use the anisotropic and rescaled Muckenhoupt weight class and treat weights as multipliers,
\begin{align}\label{eq:Ap}
	[w]_{A_{p,p}^{\beta}} = \sup_{Q\in\calR^{\beta}} \ave{w^p}_Q^{1/p}\ave{w^{-p'}}^{1/p'},\qquad \Norm{f}{L^s_{w}} = \left(\int\abs{fw}^s\right)^{1/s}.
\end{align}
The relationship to the usual definition is as follows,
\begin{align}\label{eq:App}
	[w]_{A_{p,p}^{\beta}} = [w^p]_{A_{p}^{\beta}}^{1/p},\qquad [\sigma]_{A_{p}^{\beta}} := \sup_{Q\in\calR^{\beta}} \ave{\mu}_Q\ave{\sigma^{-\frac{p'}{p}}}^{\frac{p}{p'}}.
\end{align}
 Bloom \cite{Bloom1985} characterized the two-weight $L^p_{\mu}$-to-$L^q_{\lambda}$ boundedness of the commutator of the Hilbert transform under the assumptions $\mu,\lambda\in A_{p,p}^1.$  Questions of boundedness and compactness of commutators from $L^p_{\mu}$-to-$L^q_{\lambda}$ under the background assumptions $\mu\in A_{p,p}^{(1,\dots,1)}$ and $\lambda\in A_{q,q}^{(1,\dots,1)}$  are dubbed to be of Bloom-type,  irrespective the three ways of ordering $p,q.$
When $p=q,$ the initial work of Holmes, Lacey and Wick \cite{HLW2017} extended Bloom's result to virtually all Calderón-Zygmund operators and to the case $n\geq 2.$ Then, interest into estimates of Bloom-type renewed, see e.g. the subsequent work \cites{HLW2016,HW2018, HPW2018, LOR1, LOR2, LMV2019biparBloom, LMV2019prodBloom} of several authors.
  
	Given two weights $\mu,\lambda$ and exponents $p,q,$ we introduced \cite[Definition 2.10]{HOS2022} the following fractional Bloom weight,
	\begin{align}\label{eq:bloom:weight}
		\nu =  \big(\mu/\lambda\big)^{(1+\alpha/\abs{\beta})^{-1}},\qquad \alpha/\abs{\beta}= 1/p-1/q.
	\end{align} 
Here we have simply adapted the definition to our setting of anisotropic dilations.
For all $p,q,$ the Bloom weight is always defined as on the line \eqref{eq:bloom:weight}.
The following Theorem \ref{thm:lb} is an instant consequence of Theorem \ref{thm:osc}. 
\begin{thm}\label{thm:lb} Let $p,q\in(1,\infty)$ be arbitrary and $\mu\in A_{p,p}^{\gamma}(\R^2)$ and $\lambda\in A_{q,q}^{\gamma}(\R^2).$ Then,
	\begin{align}\label{eq:BloomLB}
		\Norm{b}{\BMO_{\nu}^{\gamma,\alpha}(\R^2)}\lesssim \Norm{[b,H_{\gamma}]}{L^p_{\mu}(\R^2)\to L^q_{\lambda}(\R^2)}.
	\end{align}
\end{thm}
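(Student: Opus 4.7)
The plan is to deduce the estimate directly from Theorem~\ref{thm:osc} by combining weighted duality with a reverse-H\"older-type identity for products of Muckenhoupt weights. I set $\theta := 1 + \alpha/\abs{\beta} = 1/p + 1/q'$, so that $\nu^\theta = \mu/\lambda$.

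Fix $Q \in \calR^\gamma$. Apply Theorem~\ref{thm:osc} to produce distinct $S_1,S_2 \in \{Q,W_1,W_2,P\}$ and functions $\psi_{S_1}, \psi_{S_2}$ with $\abs{\psi_{S_i}} \lesssim 1_{S_i}$ and $\int_Q \abs{b - \ave{b}_Q} \lesssim \babs{\bave{[b,H_\gamma]\psi_{S_1},\psi_{S_2}}}$. The weighted duality $(L^q_\lambda)^* = L^{q'}_{\lambda^{-1}}$ together with the commutator-norm hypothesis then yields
\[
\int_Q \babs{b - \ave{b}_Q} \lesssim \Norm{[b,H_\gamma]}{L^p_\mu \to L^q_\lambda} \Norm{\psi_{S_1}}{L^p_\mu}\Norm{\psi_{S_2}}{L^{q'}_{\lambda^{-1}}} \lesssim \Norm{[b,H_\gamma]}{L^p_\mu \to L^q_\lambda}\, \mu^p(S_1)^{1/p}\lambda^{-q'}(S_2)^{1/q'}.
\]
Since $\mu \in A^\gamma_{p,p}$ is equivalent to $\mu^p \in A^\gamma_p \subset A^\gamma_\infty$, and similarly $\lambda^{-q'} \in A^\gamma_{q'} \subset A^\gamma_\infty$, both these weights are doubling. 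Combined with $S_i \subset \Lambda^\beta Q$ and $\abs{S_i} \sim \abs{Q}$ from Theorem~\ref{thm:osc}, this lets me replace $S_i$ by $Q$ at the cost of a uniform constant.

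The decisive step, which I expect to be the main technical point of the argument, is the inequality $\mu^p(Q)^{1/p}\lambda^{-q'}(Q)^{1/q'} \lesssim \nu(Q)^\theta$. Cancelling the common factor $\abs{Q}^\theta = \abs{Q}^{1/p + 1/q'}$ reduces this to $\ave{\mu^p}_Q^{1/p}\ave{\lambda^{-q'}}_Q^{1/q'} \lesssim \ave{\nu}_Q^\theta$. Here one observes the factorization
\[
\nu = \mu^{1/\theta}\lambda^{-1/\theta} = (\mu^p)^{\frac{1}{p\theta}}(\lambda^{-q'})^{\frac{1}{q'\theta}},\qquad \frac{1}{p\theta} + \frac{1}{q'\theta} = 1,
\]
which exhibits $\nu$ as a geometric mean of the two $A^\gamma_\infty$ weights $\mu^p$ and $\lambda^{-q'}$ with exponents summing to one. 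The reverse-H\"older inequality for such products of $A_\infty$ weights---a standard consequence of the $\log w \in \BMO$ characterization of $A_\infty$ and of John--Nirenberg---then delivers $\ave{\nu}_Q \sim \ave{\mu^p}_Q^{1/(p\theta)}\ave{\lambda^{-q'}}_Q^{1/(q'\theta)}$, with constant depending only on the Muckenhoupt characteristics $[\mu]_{A^\gamma_{p,p}}$ and $[\lambda]_{A^\gamma_{q,q}}$. Raising to the $\theta$th power and substituting into the previous display gives $\int_Q \abs{b - \ave{b}_Q} \lesssim \Norm{[b,H_\gamma]}{L^p_\mu \to L^q_\lambda}\,\nu(Q)^{1+\alpha/\abs{\beta}}$; dividing by $\nu(Q)^{1+\alpha/\abs{\beta}}$ and taking the supremum over $Q \in \calR^\gamma$ concludes the proof.
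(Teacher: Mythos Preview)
Your proof is correct and follows essentially the same path as the paper's own argument: apply Theorem~\ref{thm:osc}, use weighted duality and the doubling of $\mu^p,\lambda^{-q'}$ to reduce to the weight inequality $\mu^p(Q)^{1/p}\lambda^{-q'}(Q)^{1/q'}\lesssim\nu(Q)^{1+\alpha/\abs{\beta}}$, and then invoke that inequality. The only difference is that the paper cites \cite[Proposition~3.1]{HOS2022} for this last step, whereas you sketch its proof via the factorization $\nu=(\mu^p)^{1/(p\theta)}(\lambda^{-q'})^{1/(q'\theta)}$, Jensen's inequality, and the $A_\infty$ fact $\ave{w}_Q\sim\exp\ave{\log w}_Q$; this is precisely the content behind the cited proposition, so the two proofs coincide.
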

Theorem \ref{thm:main:bdd} is then an instant consequence of Theorems \ref{thm:UPfrac} and \ref{thm:lb}. 
In particular, we conclude that $\BMO^{\gamma}$ is the correct condition on the symbol to characterize the boundedness of the commutator along all monomial curves in the plane, as up to now it was not evident, at least to the author, that the characterizing conditions would coalesce for the curves $\gamma_{\op{adj}}$ and $\gamma_{\op{opp}}.$
We move to discuss compactness in the case $q\geq p.$ 
\begin{defn} A linear operator $T:X\to Y$ between two Banach spaces is said to be compact, provided that for each bounded set $A\subset X,$ the image $TA\subset Y$ is relatively compact, i.e. the closure $\overline{TA}$ is compact in $Y$. 
\end{defn}
\begin{defn}\label{defn:VMOw} For a weight $w$ and a fractionality parameter $\alpha\in\R,$ define 
	$$
	\VMO_{w}^{\gamma,\alpha} = \VMO_{w}^{\beta, \alpha}\subset \BMO_{w}^{\beta, \alpha}
	$$ through the conditions
	\begin{align}\label{eq:VMOv1}
		\lim_{r\to 0}\sup_{\substack{Q\in\calR^{\beta} \\ \ell(Q)\leq r}}\calO_{w}^{\beta,\alpha}(b;Q) = 0,
	\end{align}
	\begin{align}\label{eq:VMOv2}
		\lim_{r\to \infty}\sup_{\substack{Q\in\calR^{\beta} \\ \ell(Q)\geq r}}\calO_{w}^{\beta,\alpha}(b;Q) = 0,
	\end{align}
	\begin{align}\label{eq:VMOv3}
		\sup_{Q\in\calR^{\beta}}\lim_{\abs{x}\to \infty}\calO_{w}^{\beta,\alpha}(b;Q+x) = 0.
	\end{align}
\end{defn}
Theorem \ref{thm:main:comp} follows from the following two Theorems \ref{thm:suff} and \ref{thm:nec} that address sufficiency and necessity. Naturally both are subject to the same limitations as in the case of boundedness.

\begin{thm}\label{thm:suff} Let $(1/p,1/q)\in\Xi(n)$ and $\alpha/\abs{\beta}=1/p-1/q.$ Then, $b\in\VMO^{\gamma,\alpha}(\R^n)$ implies that  $[b,H_{\gamma}]\in\calK(L^p_{\mu}(\R^n),L^q_{\lambda}(\R^n)).$
\end{thm}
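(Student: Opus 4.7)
The plan is a reduction-by-approximation combined with a single-scale decomposition of $H_\gamma$, in which the $L^p$-improving of single-scale averages plays the role that kernel smoothness plays in the Calder\'on-Zygmund setting.

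\emph{Density step.} I would first show that $C_c^\infty(\R^n)$ is dense in $\VMO^{\gamma,\alpha}(\R^n)$ with respect to the $\BMO^{\gamma,\alpha}$-seminorm: the three defining conditions \eqref{eq:VMOv1}--\eqref{eq:VMOv3} of $\VMO^{\gamma,\alpha}$ are precisely what is needed to carry out an anisotropic mollify-and-truncate procedure, with \eqref{eq:VMOv1} handling the mollification error at small scales and \eqref{eq:VMOv2}, \eqref{eq:VMOv3} handling respectively the large-scale and spatial truncation errors. By Theorem \ref{thm:UPfrac} (and its weighted counterpart obtained by pushing the sparse domination of Theorem \ref{thm:SDOM} through $A_{p,p}^\gamma, A_{q,q}^\gamma$-weighted bounds), approximants $b_m\in C_c^\infty(\R^n)$ with $\Norm{b-b_m}{\BMO^{\gamma,\alpha}}\to 0$ satisfy $\Norm{[b-b_m,H_\gamma]}{L^p_\mu\to L^q_\lambda}\to 0$. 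Since $\calK(L^p_\mu,L^q_\lambda)$ is closed in the operator-norm topology, it remains to prove $[b_0,H_\gamma]\in\calK(L^p_\mu,L^q_\lambda)$ for each fixed $b_0\in C_c^\infty(\R^n)$.

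\emph{Single-scale decomposition.} Fix $b_0\in C_c^\infty(\R^n)$, choose a smooth bump $\eta$ with $\supp\eta\subset[1/2,2]$ and $\sum_{k\in\Z}\eta(2^{-k}\abs{t})=1$ for $t\ne 0$, and set
\[
	T_k f(x)=\int_\R f(x-\gamma(t))\eta(2^{-k}\abs{t})\,\frac{\ud t}{t},\qquad H_\gamma=\sum_{k\in\Z}T_k.
\]
For each fixed $k$, I would prove that $[b_0,T_k]$ is compact via a further mollification of $T_k$: letting $\mu_k$ denote the convolution measure defining $T_k$ and $\phi_\varepsilon$ a standard mollifier, the mollified $T_k^\varepsilon:=(\mu_k*\phi_\varepsilon)*\cdot$ has bounded, compactly supported integral kernel, so $[b_0,T_k^\varepsilon]$ is Hilbert-Schmidt and hence compact between weighted Lebesgue spaces. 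The $L^p$-improving of $T_k$ (or, equivalently, the decay of $\widehat{\mu_k}$ via stationary phase) yields $\Norm{T_k-T_k^\varepsilon}{L^p_\mu\to L^q_\lambda}\to 0$ as $\varepsilon\to 0$, so $[b_0,T_k]$ is a norm limit of compact operators and therefore compact. Writing $[b_0,H_\gamma]=S_K+R_K$ with $S_K=\sum_{|k|\le K}[b_0,T_k]$, the finite sum $S_K$ is compact, and the problem reduces to the tail estimate $\Norm{R_K}{L^p_\mu\to L^q_\lambda}\to 0$ as $K\to\infty$.

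\emph{Tail estimate and main obstacle.} For the small-scale tail $k\to-\infty$ the Lipschitz regularity of $b_0$ gives the pointwise bound $\abs{b_0(x)-b_0(x-\gamma(t))}\lesssim\abs{\gamma(t)}\lesssim 2^{k\beta_1}$ on $\supp\eta(2^{-k}\abs{\cdot})$, producing the summable operator bound
\[
	\Norm{[b_0,T_k]}{L^p_\mu\to L^q_\lambda}\lesssim 2^{k\beta_1}\Norm{T_k}{L^p_\mu\to L^q_\lambda},
\]
geometrically summable because the growth of $\Norm{T_k}{L^p_\mu\to L^q_\lambda}$ is controlled by the anisotropic scaling within $\Omega(n)$. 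For the large-scale tail $k\to+\infty$, the compact support of $b_0$ localizes the commutator kernel so that the strictly positive $L^p$-improving surplus of the single scales $T_k$ — available precisely because $(1/p,1/q)\in\Xi(n)$ lies in the interior of $\Omega(n)$ (or on the diagonal) rather than on its boundary — converts into a geometric decay $2^{-k\delta}$ for some $\delta>0$. The main obstacle, and the most delicate bookkeeping, is to turn this surplus into a quantitative positive decay rate $\delta$ while simultaneously transporting the Muckenhoupt weights $\mu,\lambda$ through the argument; this is also where the restriction $q<p(\Xi(n))$ enters the proof genuinely.
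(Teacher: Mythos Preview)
Your density step matches the paper's: approximate $b$ in $\BMO^{\gamma,\alpha}$ by some $b_0\in C_c^\infty$ (the paper proves this as Theorem~\ref{thm:main:dens}, though it only needs $b_0\in L^\infty_{\loc}$), and use Theorem~\ref{thm:UPfrac} to transfer the approximation to operator norm.

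After that the paper takes a different route. Rather than a Littlewood--Paley decomposition $H_\gamma=\sum_k T_k$, it writes $H_\gamma=H_{\gamma,c}+H_{\gamma,\varepsilon}$, where $H_{\gamma,c}=\varphi^{0,R}H_\gamma\varphi^{0,R}\chi_x^{r,cR}$ carries only finitely many scales $[r,cR]$ and is spatially localized, while $H_{\gamma,\varepsilon}$ collects the small-scale, large-scale and far-away pieces. Compactness of $[b_0,H_{\gamma,c}]$ is shown directly via Fr\'echet--Kolmogorov (Proposition~\ref{prop:Hc}), with equicontinuity coming from the local $L^p$-improving \emph{continuity} estimate $\|A_1^\gamma-\tau_y A_1^\gamma\|_{L^p(c_\gamma Q)\to L^q(Q)}\lesssim|y|^\eta$ of Lemma~\ref{lem:pqcont}. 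The error $\|[b_0,H_{\gamma,\varepsilon}]\|_{L^p\to L^q}\to 0$ is handled by sparse domination (Proposition~\ref{prop:He}): one shows that $[b_0,H_{\gamma,\varepsilon}]$ is controlled by the commutator sparse forms $\mathcal{A}_b^{r,s}[\scrS_k]+\mathcal{A}_b^{r,s*}[\scrS_k]$ restricted to $\gamma$-cubes that are either very small, very large, or far from the origin, and Proposition~\ref{prop:err:sprs} then feeds the three $\VMO^{\gamma,\alpha}$ conditions directly into these forms. The payoff is that the boundedness machinery is recycled wholesale; no scale-by-scale tail analysis is needed.

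Your alternative is reasonable in spirit, but the small-scale tail as written has a gap. The displayed bound $\|[b_0,T_k]\|\lesssim 2^{k\beta_1}\|T_k\|_{L^p\to L^q}$ is problematic: by anisotropic scaling $\|T_k\|_{L^p\to L^q}=2^{-k\alpha}\|T_0\|_{L^p\to L^q}$ (when the right-hand side is even finite --- the global single-scale bound needs $(1/p,1/q')\in\Omega(n)$, which is \emph{not} the hypothesis $(1/p,1/q)\in\Xi(n)$), so the product is $2^{k(\beta_1-\alpha)}$, summable as $k\to-\infty$ only if $\alpha<\beta_1$. This fails for steep curves near the off-diagonal part of $\Xi(n)$ (e.g.\ $n=2$, $\beta=(1,3)$). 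The repair is to use the compact support of $b_0$ for the small-scale tail as well: $[b_0,T_k]$ localizes both input and output to a fixed neighbourhood of $\supp b_0$, and the \emph{local} $L^p\to L^q$ bound for single scales (obtained by the tile-and-rescale argument in the proof of Proposition~\ref{prop:Hc}) is uniform in $k\le 0$, giving clean decay $2^{k\beta_1}$. Your large-scale tail sketch is in better shape, though turning ``surplus inside $\Omega(n)$'' into an explicit $2^{-k\delta}$ still requires the same localization plus a H\"older step, which you should spell out.

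Finally, note that despite $\mu,\lambda$ appearing in the statement, the paper's proof is entirely unweighted and the weighted sufficiency is explicitly deferred as an open question; carrying $A_{p,p}^\gamma$ and $A_{q,q}^\gamma$ weights through your mollification and tail estimates would require weighted single-scale $L^p$-improving bounds that are not established here.
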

In proving Theorem \ref{thm:suff}, one of the ingredients we argue for is the density of $C^{\infty}_c$ in $\VMO^{\beta,\alpha}.$ In practice we manage with just the density of $L^{\infty}_{\loc},$ and do not need the approximating element to be in $C^{\infty}_c.$ This being said,
let 
$$
\CMO^{\beta,\alpha} = \big\{f\in\BMO^{\beta,\alpha}\colon \exists\{g_j\}_j\subset C^{\infty}_c\mbox{ s.t. } \lim_{j\to\infty}\Norm{f-g_j}{\BMO^{\beta,\alpha
}}=0 \big\}
$$ be the closure of $C^{\infty}_c$ in $\BMO^{\beta,\alpha}.$ In proving Theorem \ref{thm:main:dens} below we show that the approximating element can be taken to be $C^{\infty}_c.$
\begin{thm}\label{thm:main:dens}  Let $\beta\in \R_+^n$ and $\alpha\geq 0.$ Then 
	$\CMO^{\beta,\alpha} \supset \VMO^{\beta,\alpha}.$ 
\end{thm}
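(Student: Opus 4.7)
The plan is to construct, for any $f\in\VMO^{\beta,\alpha}$ and any $\eps>0$, some $g\in C^{\infty}_c$ with $\Norm{f-g}{\BMO^{\beta,\alpha}}<\eps$, by the classical two-step scheme of \emph{mollify, then truncate}, adapted to the anisotropic $\beta$-geometry. Both steps draw on all three VMO conditions \eqref{eq:VMOv1}--\eqref{eq:VMOv3}; the standing hypothesis $\alpha\geq 0$ ensures that the fractional factor $\abs{Q}^{-\alpha/\abs{\beta}}$ is decreasing in $\ell(Q)$, which is repeatedly useful.

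Fix a nonnegative $\phi\in C^{\infty}_c(\R^n)$ with $\int \phi = 1$ and $\supp \phi\subset B_{\beta}(0,1)$, set $\phi_{\eta}(x)=\eta^{-\abs{\beta}}\phi(\eta^{-\beta}x)$ and $f_{\eta}=f*\phi_{\eta}$. The first step is to prove $\Norm{f-f_{\eta}}{\BMO^{\beta,\alpha}}\to 0$ as $\eta\to 0$. For a $\beta$-cube $Q$ with $\ell(Q)\gtrsim \eta$, the translation-invariant identity
\begin{align*}
(f-f_{\eta})(x)-\ave{f-f_{\eta}}_{Q}=\int\phi_{\eta}(y)\big[(f(x)-\ave{f}_{Q})-(f(x-y)-\ave{f}_{Q-y})\big]\,dy
\end{align*}
reduces the oscillation of $f-f_\eta$ on $Q$ to that of $f$ on a slightly dilated cube $Q^{*}\supset Q\cup (Q-y)$; when $\ell(Q)\lesssim \eta$, Jensen's inequality passes the average through the convolution, reducing everything to $\calO^{\beta,\alpha}(f;Q_{\beta}(c_{Q},C\eta))$. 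Smallness then follows after a three-way split of cubes by size and by the distance of the centre from the origin, plugging the VMO moduli \eqref{eq:VMOv1}--\eqref{eq:VMOv3} in each region.

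For the second step, take a $\beta$-cutoff $\chi_{R}\in C^{\infty}_c$ with $\chi_{R}\equiv 1$ on $B_{\beta}(0,R)$, $\supp\chi_{R}\subset B_{\beta}(0,2R)$ and $\abs{\partial_i\chi_{R}}\lesssim R^{-\beta_i}$, set $c_{R,\eta}=\ave{f_{\eta}}_{B_{\beta}(0,2R)}$ and $g_{R,\eta}=(f_{\eta}-c_{R,\eta})\chi_{R}\in C^{\infty}_c$. The identity $f_{\eta}-g_{R,\eta}=c_{R,\eta}+(f_{\eta}-c_{R,\eta})(1-\chi_R)$ shows that the oscillation of $f_{\eta}-g_{R,\eta}$ on $Q$ equals the oscillation of $(f_{\eta}-c_{R,\eta})(1-\chi_R)$, which I would bound by splitting $Q$ into: (i) $Q\subset B_{\beta}(0,R)$, where $1-\chi_R$ vanishes and the contribution is zero; (ii) $Q\cap B_{\beta}(0,2R)=\emptyset$, where $1-\chi_R\equiv 1$ so the oscillation is just $\calO^{\beta,\alpha}(f_{\eta};Q)$, handled by \eqref{eq:VMOv3}; and (iii) $Q$ meeting the annulus $B_{\beta}(0,2R)\setminus B_{\beta}(0,R)$, the delicate case. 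A diagonal choice $\eta_j\to 0$ and then $R_j\to\infty$ depending on $\eta_j$ produces the desired approximating sequence in $C^{\infty}_c$.

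\textbf{The main obstacle} is case (iii). There the cutoff $\chi_R$ actively breaks translation invariance, and a local Leibniz/Poincaré estimate on $\beta$-cubes of size comparable to $R$ must be combined with the large-scale VMO control \eqref{eq:VMOv2} (which dominates the size of $c_{R,\eta}$-deviations of $f_{\eta}$ on $B_{\beta}(0,2R)$) to close the bound. A secondary technical point, already in Step 1, is the passage from the \emph{pointwise} decay $\calO^{\beta,\alpha}(f;Q+x)\to 0$ in \eqref{eq:VMOv3} to \emph{uniform} decay over the compact range of cube shapes relevant in the intermediate regime; this requires a short equi-continuity argument in the shape parameter of $Q$.
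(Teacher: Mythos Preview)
Your scheme (mollify, then multiply by a smooth $\beta$-cutoff) is quite different from the paper's: for $\alpha=0$ the paper invokes Uchiyama's discontinuous approximation, and for $\alpha>0$ it works entirely through the H\"older characterisation $\BMO^{\beta,\alpha}\sim\dot C^{0,\beta,\alpha}$ of Lemma~\ref{lem:H\"{o}lderBMOa}, defining $g=(f*\eta_i)\circ\tau$ for a contracting map $\tau\colon\R^n\to B_{\beta}(0,M)$ and never touching a spatial cutoff at all.

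There is a genuine gap in your Step~1. Your identity does give $\calO^{\beta,\alpha}(f-f_{\eta};Q)\lesssim \sup_{\abs{y}_\beta\leq\eta}\calO^{\beta,\alpha}(f;Q-y)$ for $\ell(Q)\gtrsim\eta$, but this is useless on the ``bad'' cubes with $r_0\leq\ell(Q)\leq R_0$ and $\dist(Q,0)\leq D_0$, where none of \eqref{eq:VMOv1}--\eqref{eq:VMOv3} applies and the right-hand side can be as large as $\Norm{f}{\BMO^{\beta,\alpha}}$; your three-way split simply does not cover these cubes. The correct argument here uses only \eqref{eq:VMOv1}: decompose $Q$ into $\beta$-subcubes $Q_j$ of side $\sim\eta$ and estimate $\int_{Q_j}\abs{f-f_\eta}\lesssim\abs{Q_j}\,\eta^{\alpha}\sup_{\ell(P)\sim\eta}\calO^{\beta,\alpha}(f;P)$, which after summation gives $\calO^{\beta,\alpha}(f-f_\eta;Q)\lesssim(\eta/\ell(Q))^{\alpha}\cdot o_{\eta}(1)$ uniformly in $Q$.

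Your handling of Step~2, case~(iii), is also too loose. The stated plan---Leibniz on cubes of size $\sim R$ together with large-scale control of $c_{R,\eta}$-deviations on $B_{\beta}(0,2R)$---does not close for small cubes inside the annulus: the pointwise quantity $\sup_{B_\beta(0,2R)}\abs{f_\eta-c_{R,\eta}}$ is \emph{not} dominated by \eqref{eq:VMOv2}, and for $\alpha=0$ a naive product rule leaves a factor $\log(R/\eta)$. One way to repair this is to use the sharper product estimate
\[
\calO^{\beta,0}\big((f_\eta-c_{R,\eta})(1-\chi_R);Q\big)\lesssim \calO^{\beta,0}(f_\eta;Q)+\babs{\ave{f_\eta}_Q-c_{R,\eta}}\,\calO^{\beta,0}(\chi_R;Q),
\]
after which the \emph{average} $\abs{\ave{f_\eta}_Q-c_{R,\eta}}$ (not the supremum) is bounded by a telescoping sum of oscillations along $\beta$-balls that are all either far from the origin or of scale $\gtrsim R$; the resulting logarithmic loss is then absorbed by $\calO^{\beta,0}(\chi_R;Q)\lesssim(\ell(Q)/R)^{\beta_1}$. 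The paper sidesteps this whole discussion by replacing the cutoff with the composition $\circ\,\tau$.
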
 
With $\beta = (1,\dots,1)$ and $\alpha= 0,$ Theorem \ref{thm:main:dens} is due to Uchiyama \cite{Uch1978}, and with $\beta = (1,\dots,1)$ and $\alpha>0,$ due to Guo, He, Wu and Yang \cite[Theorem 1.7.]{GuoHeWuYang2021}; in these classical cases the density of $C^{\infty}_c$ was a crucial ingredient.  
In this light, Theorem \ref{thm:main:dens} is hardly surprising; however, it is not so easy to prove and we consider it to be a real result of this article. Moreover, we expect that applications may be found for sufficiency of compactness where $\VMO^{\beta,\alpha}$ is by definition taken to be the sought-after condition, as an example, see Mair, Moen \cite{MaiMoe2023} and the references therein, where commutators are studied under bump conditions on the weights.
 
\begin{thm}\label{thm:nec} Let $\mu\in A_{p,p}^{\gamma}(\R^2)$ and $\lambda\in A_{q,q}^{\gamma}(\R^2).$ Then, $[b,H_{\gamma}]\in\calK(L^p_{\mu}(\R^2),L^q_{\lambda}(\R^2))$ implies that $b\in\VMO_{\nu}^{\gamma,\alpha}(\R^2).$
\end{thm}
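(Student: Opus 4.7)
\textbf{Proof plan for Theorem \ref{thm:nec}.} My strategy is to reduce the $\VMO_\nu^{\gamma,\alpha}$ condition on $b$ to the decay of bilinear pairings against $[b,H_\gamma]$, using the local off-support lower bound of Theorem \ref{thm:osc}, and then to exploit compactness through weakly null testing sequences. Starting from \eqref{eq:osc3}, H\"older's inequality in the weighted spaces $L^p_\mu$ and $L^{q'}_{\lambda^{-1}}$ gives, for suitable $c_Q,c_Q'>0$ and test functions $f_Q = c_Q\psi_{S_1}$, $g_Q = c_Q'\psi_{S_2}$,
\[
\fint_Q |b-\ave{b}_Q| \lesssim \frac{1}{|Q|} \|f_Q\|_{L^p_\mu}\|g_Q\|_{L^{q'}_{\lambda^{-1}}}\cdot \frac{|\bave{[b,H_\gamma]f_Q,g_Q}|}{\|f_Q\|_{L^p_\mu}\|g_Q\|_{L^{q'}_{\lambda^{-1}}}},
\]
where $\|f_Q\|_{L^p_\mu}\lesssim \ave{\mu^p}_Q^{1/p}|Q|^{1/p}$ and $\|g_Q\|_{L^{q'}_{\lambda^{-1}}}\lesssim \ave{\lambda^{-q'}}_Q^{1/q'}|Q|^{1/q'}$ by \eqref{eq:osc2}. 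Combining this with the definition $\nu=(\mu/\lambda)^{(1+\alpha/|\beta|)^{-1}}$ of the Bloom weight and the $A_{p,p}^\gamma$ and $A_{q,q}^\gamma$ assumptions on $\mu,\lambda$, one verifies a Muckenhoupt-type identity $\ave{\mu^p}_Q^{1/p}\ave{\lambda^{-q'}}_Q^{1/q'}|Q|^{1/p+1/q'-1}\sim \nu(Q)^{1+\alpha/|\beta|}/|Q|^{\alpha/|\beta|}$, which rearranges to
\[
\calO_{\nu}^{\gamma,\alpha}(b;Q) \lesssim \frac{|\bave{[b,H_\gamma]f_Q,g_Q}|}{\|f_Q\|_{L^p_\mu}\|g_Q\|_{L^{q'}_{\lambda^{-1}}}}.
\]

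Given this reduction, I argue each of the three defining conditions \eqref{eq:VMOv1}--\eqref{eq:VMOv3} of $\VMO_\nu^{\gamma,\alpha}$ by contradiction. Supposing the supremum in question exceeds some $\eps>0$, I extract a sequence $\{Q_k\}\subset\calR^\gamma$ with $\calO_\nu^{\gamma,\alpha}(b;Q_k)\geq\eps$ and with, respectively, $\ell(Q_k)\to 0$, $\ell(Q_k)\to\infty$, or $Q_k=Q+x_k$ with $|x_k|\to\infty$. In each regime I pass to a subsequence so that the supports $S_{1,k}$ of the normalized test functions $f_{Q_k}$ either shrink to a common point, exhaust $\R^2$, or escape to infinity; in all three situations the boundedness of $\{f_{Q_k}\}$ in $L^p_\mu$ together with disjoint/escaping supports yields a weakly null sequence (tested against functions in the dense subspace $C^\infty_c \subset L^{p'}_{\mu^{-1}}$). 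Compactness of $[b,H_\gamma]:L^p_\mu\to L^q_\lambda$ converts weak null to norm null, so $\|[b,H_\gamma]f_{Q_k}\|_{L^q_\lambda}\to 0$; since $\{g_{Q_k}\}$ is uniformly bounded in $L^{q'}_{\lambda^{-1}}$, the pairing $|\bave{[b,H_\gamma]f_{Q_k},g_{Q_k}}|\to 0$, contradicting the displayed lower bound.

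The principal obstacles are, firstly, the weak-null verification in the translation regime \eqref{eq:VMOv3}: here one must fix a reference cube $Q$, verify that the normalizing factors $\|f_{Q+x_k}\|_{L^p_\mu}$ stay bounded away from both $0$ and $\infty$ as $x_k\to\infty$ (which uses the $A_{p,p}^\gamma$ doubling of $\mu^p$ together with the fact that along any sequence $x_k\to\infty$ one can extract a subsequence moving in a coherent direction so that the translates of $S_{1,k}$ become pairwise disjoint), and then test against $C^\infty_c$ functions using that the supports eventually leave any given compact set. Secondly, the large-cube regime \eqref{eq:VMOv2} requires that, after normalization, $f_{Q_k}$ pair with an arbitrary $\phi\in C^\infty_c$ to give $\bave{f_{Q_k},\phi}\to 0$, which follows because $\|f_{Q_k}\|_{L^\infty}\lesssim \|\psi_{S_{1,k}}\|_{L^\infty}/\|f_{Q_k}\|_{L^p_\mu}^{-1}$ decays as $|S_{1,k}|^{-1/p}$ times a bounded weight factor. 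Once these weak-null verifications are in hand, the rest of the argument is, modulo weight book-keeping, identical to the classical unweighted and the Calder\'on--Zygmund analogues, and the main novelty is precisely that Theorem \ref{thm:osc} supplies the necessary off-support bound uniformly over all $\gamma$-cubes, including those corresponding to the opposite-quadrant curves $\gamma_{\op{opp}}$.
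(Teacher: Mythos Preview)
Your overall strategy---reducing to the pairing lower bound via Theorem~\ref{thm:osc} exactly as in the proof of Theorem~\ref{thm:lb}, and then using that a compact operator sends weakly null sequences to norm-null ones---is valid and genuinely different from the paper's proof. The paper instead invokes Lemma~\ref{lem:seq}: it produces normalised $u_j=\psi_{S_1^j}/\mu^p(Q^j)^{1/p}$ with $\|[b,H_\gamma]u_j\|_{L^q_\lambda}\gtrsim 1$, extracts by compactness a subsequence with $[b,H_\gamma]u_j\to\Phi\neq 0$ in $L^q_\lambda$, and then contradicts boundedness by verifying the ``almost-unconditional'' estimate~(ii) of Lemma~\ref{lem:seq} through (nearly) disjoint supports---the small-cube case \eqref{eq:VMOv1} requiring a rather delicate inductive extraction with a running constant $C<2$. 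Your route bypasses Lemma~\ref{lem:seq} entirely, at the price of having to check weak nullity directly in each regime.

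There are, however, real gaps in your weak-null verifications. In the small-cube regime \eqref{eq:VMOv1} the supports shrink to a point and are neither ``disjoint'' nor ``escaping'' in the sense you invoke; the correct argument is that for $\phi\in C^\infty_c$ one has $|\langle f_{Q_k},\phi\rangle|\lesssim\|\phi\|_\infty\,\mu^{-p'}(S_{1,k})^{1/p'}\to 0$ by local integrability of the dual weight $\mu^{-p'}\in A_{p'}^\gamma$. In the large-cube regime \eqref{eq:VMOv2} your claim that $\|f_{Q_k}\|_{L^\infty}$ decays like $|S_{1,k}|^{-1/p}$ times a \emph{bounded} weight factor is false: that factor is $\ave{\mu^p}_{S_{1,k}}^{-1/p}$, which is unbounded for general $\mu\in A_{p,p}^\gamma$. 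What does work is a dichotomy after passing to a subsequence: either the $S_{1,k}$ eventually leave every compact set (weak null by testing against $C^\infty_c$), or some fixed $x_0$ lies in infinitely many $S_{1,k}$, in which case doubling of $\mu^p$ together with the $A_\infty^\gamma$ bound $\mu^p(E)/\mu^p(Q)\leq C(|E|/|Q|)^\delta$ forces $\mu^p(S_{1,k})\to\infty$, hence $\|f_{Q_k}\|_{L^\infty}\to 0$. With these repairs your argument goes through; note also that your intermediate ``Muckenhoupt-type identity'' has a stray power of $|Q|$ (the correct relation is $\mu^p(Q)^{1/p}\lambda^{-q'}(Q)^{1/q'}\sim\nu(Q)^{1+\alpha/|\beta|}$), though your final displayed inequality is correct.
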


Let us lastly move to the case $q<p.$ This case was recently settled for non-degenerate Calderón-Zygmund operators by Hyt\"{o}nen, Li, Tao, Yang \cite{HLTY2022},
\begin{align}\label{eq:HLTY}
 \Norm{[b,T]}{L^p(\R^n)\to L^q(\R^n)} \lesssim 1  \overset{*}{\Longleftrightarrow}	b\in \dot L^r(\R^n) \overset{**}{\Longrightarrow} [b,T]\in\calK(L^p,L^q),
\end{align}
by proving the $**$-implication, while the $*$-implication was proved by Hyt\"{o}nen \cite{HyLpLq} in the quantitative form 
\begin{align}\label{eq:q<pLr}
	\Norm{[b,T]}{L^p(\R^n)\to L^q(\R^n)}\sim\Norm{b}{\dot L^r}.
\end{align} 
Recall that in the current case, \eqref{eq:HLTY} implies the equivalence of compactness and boundedness. 
Above and below,
\begin{align}\label{eq:dotLr}
	\Norm{b}{\dot L^r(\R^n)}= \inf_{c\in\C}\Norm{b-c}{L^r(\R^n)},\qquad 1/q = 1/r + 1/p.
\end{align} 
We reproduce \eqref{eq:HLTY} in our setting through Theorems \ref{thm:main:q<pA} and \ref{thm:main:q<pB} below.
\begin{thm}\label{thm:main:q<pA} Let $q<p.$ Then, $b\in \dot L^r(\R^n)$ implies that $[b,H_{\gamma}]\in\calK(L^p(\R^n), L^q(\R^n)).$
\end{thm}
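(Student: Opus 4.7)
The plan is a density argument. First I would establish the quantitative bound $\Norm{[b,H_{\gamma}]}{L^p(\R^n)\to L^q(\R^n)}\lesssim \Norm{b}{\dot L^r(\R^n)}$ as a direct consequence of the sparse form domination of Theorem \ref{thm:SDOM}: for admissible $(1/r_0,1/s_0')\in \Omega(n)$ with $r_0$ close to $1$, H\"older inside each sparse cube factors out an average of $b-\ave{b}_Q$, and a standard sparse/Carleson summation then gives the bound against $\Norm{b}{\dot L^r}$ exactly as in Hyt\"onen's quantitative argument \cite{HyLpLq}; the constant $\ave{b}_Q$ is invisible to the commutator, justifying the appearance of the homogeneous $\dot L^r$-norm. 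Since $C^\infty_c(\R^n)$ is dense in $\dot L^r(\R^n)$ and the compact operators form a closed subspace of $\mathcal L(L^p,L^q)$, it then suffices to prove that $[b,H_\gamma]\in\calK(L^p(\R^n),L^q(\R^n))$ for every $b\in C^\infty_c(\R^n)$.

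\textbf{Compactness for smooth symbols.} Fix $b\in C^\infty_c$ with $\supp b\subset B(0,R)$. I would verify the Fr\'echet--Kolmogorov criterion for relative compactness of $\{[b,H_\gamma]f:\Norm{f}{L^p}\leq 1\}$ in $L^q(\R^n)$: uniform $L^q$-boundedness (from step one), equicontinuity under translations, and tightness at infinity. The identity
\[
[b,H_\gamma]f(x) = \int_\R\int_0^1 \nabla b(x-s\gamma(t))\cdot \frac{\gamma(t)}{t}\,f(x-\gamma(t))\,ds\,dt
\]
exposes the short-scale regularization: since $\abs{\gamma(t)}\lesssim \abs{t}^{\beta_1}$ for $\abs{t}\leq 1$, the $1/t$-singularity becomes integrable in $t$ after being multiplied by $\nabla b$, and a dyadic decomposition in $\abs{t}$ yields equicontinuity via the usual maximal-function majorant. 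For tightness, use the splitting $[b,H_\gamma]f = bH_\gamma f - H_\gamma(bf)$: the first piece is supported in $B(0,R)$, and for the second piece, when $\abs{x}_\beta \gg R$, the slice $\{t\in\R: x-\gamma(t)\in \supp b\}$ has Lebesgue measure controlled by a negative power of $\abs{x}_\beta$ (since $\abs{\gamma(t)}_\beta\sim \abs{t}$), so a H\"older application that crucially exploits the gap $q<p$ delivers the required uniform $L^q$-smallness outside large balls.

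\textbf{The main obstacle.} The most delicate step is the tightness estimate. A Calder\'on--Zygmund kernel enjoys a uniform $\abs{x-y}^{-n}$ off-diagonal decay, whereas the kernel of $H_\gamma$ is concentrated on the one-dimensional curve $y+\gamma(\R)$, so the naive tightness argument is unavailable and must be replaced by the quantitative geometric bound on the slice measure described above, balanced against the $1/t$-singularity of $H_\gamma$. The upshot, however, is that once the slice-measure decay is cleanly formulated, the argument becomes largely insensitive to the precise structure of the underlying operator: applied to a Calder\'on--Zygmund kernel the decay comes for free from the $\abs{x-y}^{-n}$ bound, which explains how the same approach yields the new short proof of the corresponding implication from \cite{HLTY2022} that was advertised in the introduction.
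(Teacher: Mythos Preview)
Your density reduction to $b\in C^\infty_c$ is correct, but both the steps surrounding it differ substantially from the paper and your central step has a genuine gap.

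\emph{The quantitative bound.} You propose to extract $\Norm{[b,H_\gamma]}{L^p\to L^q}\lesssim\Norm{b}{\dot L^r}$ from the sparse domination. The paper does this in one line without any sparse machinery: since $H_\gamma$ is bounded on $L^p$ and on $L^q$, H\"older's inequality with exponent $r$ gives
\[
\Norm{[b,H_\gamma]}{L^p\to L^q}\leq \Norm{bH_\gamma}{L^p\to L^q}+\Norm{H_\gamma b}{L^p\to L^q}\lesssim \Norm{b}{L^r},
\]
and the $\dot L^r$ version follows because the commutator is unchanged by additive constants.

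\emph{The compactness step.} Here you diverge from the paper, and your argument is incomplete. The paper does \emph{not} verify Fr\'echet--Kolmogorov directly. Instead it extrapolates from the diagonal case: since $b\in C^\infty_c\subset\VMO^{\gamma,0}$, Theorem~\ref{thm:suff} already gives $[b,H_\gamma]\in\calK(L^p,L^p)$. Writing $H_\gamma=\chi^{0,R}H_\gamma\chi^{0,R}+\chi^{0,R}H_\gamma\chi^{R,\infty}+\chi^{R,\infty}H_\gamma\chi^{0,R}+\chi^{R,\infty}H_\gamma\chi^{R,\infty}$ with $R$ larger than the support radius of $b$, the last commutator vanishes identically (because $b\chi^{R,\infty}=0$), and each of the other three is a composition $\chi^{\bullet}\circ[b,H_\gamma]\circ\chi^{\bullet}$ in which the middle operator is compact $L^p\to L^p$ and at least one multiplier $\chi^{0,R}$ is bounded $L^p\to L^q$ (by H\"older, since $q<p$ and the support is bounded). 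Compact composed with bounded is compact. That is the entire argument, and this same trick is what gives the short re-proof of \cite{HLTY2022}---not the slice-measure mechanism you describe.

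\emph{Where your tightness sketch breaks.} Your proposal to control $\Norm{1_{\{|x|_\beta>M\}}H_\gamma(bf)}{L^q}$ via the Lebesgue measure of the slice $\{t:x-\gamma(t)\in\supp b\}$ does not close as written. On a dyadic shell $|x|_\beta\sim 2^k$ one has $|t|\sim 2^k$ on the slice, so the kernel contributes $2^{-k}$; but a Minkowski/Fubini estimate over that shell picks up $|I_k|\sim 2^k$ from the $t$-integration, and the two factors cancel exactly, leaving a contribution that is independent of $k$ and hence not summable. One can likely repair this by also exploiting that the set $\{x:|x|_\beta\sim 2^k,\ S_x\neq\emptyset\}$ is a thin neighbourhood of the curve, but this requires a careful geometric estimate that you have not supplied, and which the paper's extrapolation argument avoids entirely.
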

 We prove Theorem \ref{thm:main:q<pA} by essentially extrapolating from any of the cases $q \geq p.$ Our approach is new and we expect it to find applications in proving similar implications outside our specific setting; in this article we showcase a short proof of the $**$-implication on the line \eqref{eq:HLTY}, which is the main result of Hyt\"{o}nen et al. \cite[Theorem 2.5.]{HLTY2022}.

Recently H\"{a}nninen, Lorist and Sinko \cite[Theorem A]{HLS2023} settled the Bloom boundedness of commutators by providing the following estimate,
\begin{align}\label{eq:HLS}
	\Norm{[b,T]}{L^p(\mu)\to L^q(\lambda)}\sim \Norm{M^{\#}_{\nu} b}{L^r(\nu)}, \quad M^{\#}_{\nu} b = \sup_{Q\in\calR^{(1,\dots,1)}}\frac{1_Q}{\nu(Q)}\int_Q\abs{b-\ave{b}_Q},
\end{align}
valid for non-degenerate Calderón-Zygmund operators $T.$
In order to stay aligned with \eqref{eq:HLS}, we now treat the weights as measures, i.e. $\Norm{f}{L^s(w)} := \Norm{fw^{1/s}}{L^s},$ and this leads to the Bloom weight $\nu^{1/p+1/q'} = \mu^{1/p}\lambda^{-1/q}.$
By only using Theorem \ref{thm:osc}, we obtain from the methods of \cite{HLS2023} the following two-weight necessity result.
\begin{thm}\label{thm:main:q<pB} There holds that 
		\begin{align*}
			\Norm{M^{\gamma,\#}_{\nu} b}{L^r(\R^2(\nu))} \lesssim 	\Norm{[b,H_{\gamma}]}{L^p(\R^2(\mu))\to L^q(\R^2(\lambda))}, \quad M^{\gamma,\#}_{\nu} b = \sup_{Q\in\calR^{\gamma}}\frac{1_Q}{\nu(Q)}\int_Q\abs{b-\ave{b}_Q}.
				\end{align*}
\end{thm}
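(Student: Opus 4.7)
The plan is to adapt the argument of H\"anninen--Lorist--Sinko \cite{HLS2023}, which establishes the equivalence \eqref{eq:HLS} for non-degenerate Calder\'on--Zygmund operators, to the present anisotropic setting. Theorem \ref{thm:osc} will supply exactly the single-cube bilinear lower bound that they obtain from disjoint-cube testing on a non-degenerate CZ kernel, so once its statement is in hand the remaining argument is essentially weight-theoretic and independent of the curve $\gamma.$

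First, I would carry out a stopping-time argument on the $\beta$-dyadic lattice adapted to $\nu.$ Since $\mu\in A_{p,p}^{\gamma}$ and $\lambda\in A_{q,q}^{\gamma}$ force the Bloom weight $\nu$ to be $\beta$-doubling, this produces a sparse family $\scrS\subset\calR^{\gamma}$ with pairwise disjoint principal sets $E_Q\subset Q$ of $\nu$-measure $\gtrsim \nu(Q),$ carrying the pointwise mass of $M^{\gamma,\#}_{\nu}b$ in the sense of a Carleson embedding
\begin{align*}
\Norm{M^{\gamma,\#}_{\nu}b}{L^r(\R^2(\nu))}^{r} \lesssim \sum_{Q\in\scrS} a_Q^{\,r}\, \nu(Q), \qquad a_Q := \frac{1}{\nu(Q)}\int_{Q}\babs{b-\ave{b}_Q}.
\end{align*}
Applying Theorem \ref{thm:osc} to each $Q\in\scrS$ produces test functions $\psi_1^Q,\psi_2^Q$ bounded by indicators of sets $S_i^Q\subset \Lambda^{\beta}Q$ with $\abs{S_i^Q}\sim\abs{Q},$ such that $a_Q\nu(Q)\lesssim \babs{\bave{[b,H_{\gamma}]\psi_1^Q,\psi_2^Q}};$ choosing a unimodular $\epsilon_Q\in\C$ absorbs the modulus into $\bave{[b,H_{\gamma}](\epsilon_Q\psi_1^Q),\psi_2^Q}.$

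The next step is linearisation by duality and randomisation. By $\ell^{r}(\nu(Q))$-duality,
\begin{align*}
\Big(\sum_{Q\in\scrS} a_Q^{\,r}\nu(Q)\Big)^{1/r} = \sup\Big\{\sum_{Q\in\scrS}\alpha_Q\, a_Q\, \nu(Q):\ \alpha_Q\geq 0,\ \sum_{Q\in\scrS}\alpha_Q^{\,r'}\nu(Q)\leq 1\Big\}.
\end{align*}
For admissible $(\alpha_Q),$ insert independent Rademachers $\{\varrho_Q\}$ and set $F_{\omega}:=\sum_{Q}\varrho_Q(\omega)\alpha_Q\epsilon_Q\psi_1^Q$ and $G_{\omega}:=\sum_{Q}\varrho_Q(\omega)\psi_2^Q.$ The orthogonality $\E\varrho_Q\varrho_{Q'}=\delta_{Q,Q'}$ selects the diagonal, and H\"older applied pathwise then gives
\begin{align*}
\sum_{Q\in\scrS}\alpha_Q a_Q \nu(Q) \lesssim \E \bave{[b,H_{\gamma}]F_{\omega},G_{\omega}} \leq \Norm{[b,H_{\gamma}]}{L^p(\R^2(\mu))\to L^q(\R^2(\lambda))}\,\E\Norm{F_{\omega}}{L^p(\mu)}\Norm{G_{\omega}}{L^{q'}(\lambda^{1-q'})}.
\end{align*}
Probabilistic H\"older combined with Khintchine finally converts the two random norms to square functions
\begin{align*}
\BNorm{\Big(\sum_{Q\in\scrS}\alpha_Q^{\,2}\abs{\psi_1^Q}^{2}\Big)^{1/2}}{L^p(\mu)}\,\BNorm{\Big(\sum_{Q\in\scrS}\abs{\psi_2^Q}^{2}\Big)^{1/2}}{L^{q'}(\lambda^{1-q'})},
\end{align*}
at which point the sparsity of $\scrS$ (bounded overlap of $\{\Lambda^{\beta}Q\}_{Q\in\scrS}$ at each scale) and the $\beta$-doubling of $\mu$ and $\lambda^{1-q'}$ reduce these square sums to expressions supported on the disjoint principal sets $\{E_Q\}.$ The pointwise Bloom factorisation $\nu^{1/r'}=\mu^{1/p}\lambda^{-1/q}$ combined with H\"older on the conjugate triple $(p,q',r),$ which is a triple since $1/p+1/q'+1/r=1,$ collapses the product to $(\sum_{Q}\alpha_Q^{\,r'}\nu(Q))^{1/r'}\leq 1,$ closing the estimate.

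The main obstacle is precisely this last balancing: the two square functions in $L^p(\mu)$ and $L^{q'}(\lambda^{1-q'})$ must combine into exactly an $\ell^{r'}(\nu(Q))$-sum of $(\alpha_Q),$ so every weight exponent has to cancel on the nose. The passage from $\{\Lambda^{\beta}Q\}$-overlap to $\{E_Q\}$-support relies on the sparse Carleson-embedding machinery from \cite{HLS2023}, while the cube-level comparison $\mu(Q)^{1/p}\lambda^{1-q'}(Q)^{1/q'}\sim \nu(Q)^{1/r'}$ uses the Muckenhoupt hypothesis on both sides together with the Bloom identity; once these two weight-theoretic reductions are established, the proof concludes with no further input from the curve $\gamma$ beyond Theorem \ref{thm:osc}.
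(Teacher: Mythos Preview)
Your proposal is correct and follows exactly the route the paper indicates: the paper does not give a standalone proof of this theorem but states that it follows ``from the methods of \cite{HLS2023}'' with Theorem \ref{thm:osc} supplying the per-cube bilinear lower bound that in \cite{HLS2023} comes from non-degenerate CZ testing. Your sketch (sparse linearisation of $M^{\gamma,\#}_{\nu}b$, Theorem \ref{thm:osc} on each stopping cube, Rademacher decoupling and Khintchine, then the $A_p$-doubling and Bloom factorisation $\nu^{1/r'}=\mu^{1/p}\lambda^{-1/q}$ to close with H\"older on the triple $1/p+1/q'+1/r=1$) is precisely that argument transported to the $\beta$-dyadic setting, and your identification of the one point needing care---handling the overlap of the dilates $\Lambda^{\beta}Q$ rather than the cubes $Q$ themselves---is on target.
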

In the unweighted case $\Norm{M^{\gamma,\#}_{1} b}{\dot L^r}\sim \Norm{b}{\dot L^r},$
and thus Theorem \ref{thm:main:q<p} is implied by Theorems \ref{thm:main:q<pA} and \ref{thm:main:q<pB}. We should note however that Theorem \ref{thm:main:q<pB} is way too good for the unweighted lower bound $\Norm{b}{\dot L^r(\R^2)}\lesssim \Norm{[b,H_{\gamma}]}{L^p(\R^2)\to L^q(\R^2)}$ which is obtainable by Theorem \ref{thm:osc} and arguing as in \cite[Section 2.5.]{HyLpLq}.

\subsection{More research questions} 
Of the research Questions \ref{que:1} and \ref{que:2} above, we hope to address Question \ref{que:2} in a future work. 
There are some other natural gaps to be filled and we discuss these next.

We have made the decision to omit discussing any single-weight or Bloom-type two-weight sufficiency results.  
In particular, we expect some single-weight sufficiency results to follow through the sparse domination (obtained in this article) and the unweighted cases (obtained in this article) through a limited range extrapolation of compactness results such as Hyt\"{o}nen and Lappas \cite[Theorems 2.3. and 2.4.]{HytLap2023}, or Cao, Olivo, Yabuta \cite{CaoOliYab2022}. It would be very interesting to see this worked out in detail. What would be even more interesting would be to see Bloom-type sufficiency results.
\begin{que} Obtain Bloom-type two-weight sufficiency results for boundedness, and especially for compactness, for any arrangement $q<p,q=p,q>p$ of the exponents.
\end{que}

\begin{que}\label{que:3} Which results of this article can be reproduced with Hilbert transforms along curves with non-vanishing torsion, see e.g. \cite[Section 1.2.]{ClaOu2017} for the definition. 
\end{que}

One way to approach Question \ref{que:3} is to note that the machinery of Section \ref{sect:UB} is available for curves with non-vanishing torsion, thus in principle the sufficiency of testing conditions should be more straightforward. To address necessity, the only known approach at the moment is to obtain a version of Theorem \ref{thm:osc} for curves with non-vanishing torsion, here even the $n=2$ case would be interesting to see worked out.

\section{Sufficiency for $q\geq p$}\label{sect:UB}
\subsection{Boundedness}
Cladek, Ou \cite{ClaOu2017} discretized the Hilbert transform along a monomial curve as
\begin{align}\label{eq:Hsplit}
	H_{\gamma} = \sum_{\vec{j}\in \{0,1,2\}^n}\sum_{Q\in\calD^{\gamma}_{\vec{j}}}A_Q
\end{align}
to localized  dyadic pieces $\supp(A_Qf)\subset Q,$ see \cite[Section 2.]{ClaOu2017}.
\begin{lem}[\cite{ClaOu2017}, Lemma 2.3.]\label{lem:ClaOu2017A} 
	Let $\gamma:\R\to\R^n$ be a monomial curve, let $(1/r,1/s')\in\Omega(n).$ Let $Q_0\in\calD^{\gamma}_{\vec{j}}$ and let $\ch(Q_0)\subset\calD^{\gamma}_{\vec{j}}$ consist of those maximal dyadic $\gamma$-cubes $P$ 
	such that 
	\begin{align}
		\ave{f}_{P,r} \geq C\ave{f}_{Q_0,r}\, \text{ or } \,	\ave{g}_{P,s} \geq C\ave{g}_{Q_0,s}.
	\end{align}
	Then, for a choice of $C>1$ sufficiently large (independent of $Q_0$) there holds that 
	\begin{align*}
		\sum_{P\in\ch(Q_0)}\abs{P}\leq \frac{1}{2}\abs{Q_0},\qquad 	\Babs{\bave{\sum_{Q\in\mathcal{E}_{Q_0}} A_Qf,g}}\lesssim \abs{Q_0}	\ave{f}_{Q_0,r}\ave{g}_{Q_0,s},
	\end{align*}
where $\mathcal{E}_{Q_0} = \calD^{\gamma}_{\vec{j}}(Q_0)\setminus \cup_{P\in\ch(Q_0)}\calD^{\gamma}_{\vec{j}}(P).$
\end{lem}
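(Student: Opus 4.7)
The plan is to establish the two conclusions by separate arguments: the measure bound on $\ch(Q_0)$ is a routine dyadic stopping-time estimate, while the bilinear size bound on $\mathcal{E}_{Q_0}$ is the content of the $L^p$-improving property of single-scale averaging operators that is built into the definition of the region $\Omega(n)$.

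For the first conclusion, I would observe that by the very maximality definition of $\ch(Q_0)$, the union $\bigcup_{P \in \ch(Q_0)} P$ is contained in
\[
\big\{ x \in Q_0 \colon M^\gamma_{\vec{j},r}(f 1_{Q_0})(x) > C \ave{f}_{Q_0,r}\big\} \cup \big\{ x \in Q_0 \colon M^\gamma_{\vec{j},s}(g 1_{Q_0})(x) > C \ave{g}_{Q_0,s}\big\},
\]
where $M^\gamma_{\vec{j},t} h := (M^\gamma_{\vec{j}} |h|^t)^{1/t}$ is the $L^t$-variant of the $\gamma$-dyadic maximal function on $\calD^\gamma_{\vec{j}}$. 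The weak-$(r,r)$ and weak-$(s,s)$ bounds for the dyadic maximal function (with constant $1$) yield total measure at most $(C^{-r} + C^{-s})|Q_0|$, which is $\leq |Q_0|/2$ once $C$ is taken sufficiently large depending only on $r,s$.

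For the second conclusion, note that the stopping condition forces $\ave{f}_{Q,r} \leq C \ave{f}_{Q_0,r}$ and $\ave{g}_{Q,s} \leq C \ave{g}_{Q_0,s}$ uniformly over every $Q \in \mathcal{E}_{Q_0}$: if the inequality failed on such a $Q$, either $Q$ itself or a suitable ancestor would be a maximal stopping cube containing $Q$, contradicting $Q \in \mathcal{E}_{Q_0}$. Since $\supp(A_Q f) \subset Q \subset Q_0$, we may also replace $f,g$ by $f1_{Q_0}$, $g1_{Q_0}$. Organise the bilinear form by scale,
\[
\Bave{\sum_{Q \in \mathcal{E}_{Q_0}} A_Q f, g} \;=\; \sum_{k \geq 0}\, \Bave{\sum_{\substack{Q \in \mathcal{E}_{Q_0}\\ \ell(Q) = 2^{-k}\ell(Q_0)}} A_Q f, g},
\]
and apply to each scale the single-scale $L^r \times L^s$ bilinear estimate that is available precisely for $(1/r, 1/s') \in \Omega(n)$: at scale $k$, the operator $\sum_Q A_Q$ behaves like a single-scale convolution with arc-length measure on $\gamma$, whose $L^p$-improving behaviour gains a factor $2^{-\delta k}$ beyond the trivial $L^\infty \times L^1$ bound when $(1/r, 1/s')$ is interior to $\Omega(n)$.

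The main obstacle is the summation over scales. If $(1/r, 1/s')$ lies strictly inside $\Omega(n)$, the single-scale gain is geometric and sums directly to $|Q_0|\ave{f}_{Q_0,r}\ave{g}_{Q_0,s}$ once one applies the stopping bounds on $\ave{f}_{Q,r}$ and $\ave{g}_{Q,s}$. For boundary points (or the diagonal), the single-scale estimate is only restricted weak-type; one then truncates $f$ and $g$ at heights proportional to $\ave{f}_{Q_0,r}$ and $\ave{g}_{Q_0,s}$ (whose $L^\infty$ bound is governed by the stopping thresholds) and interpolates the restricted weak-type bound against an interior point of $\Omega(n)$ to restore geometric decay in $k$. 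Summing the geometric series produces the desired bound $\lesssim |Q_0|\ave{f}_{Q_0,r}\ave{g}_{Q_0,s}$, completing the proof.
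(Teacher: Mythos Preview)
The paper does not itself prove this lemma; it is quoted from Cladek--Ou. The paper does, however, signal what the proof rests on: immediately after the statement it says that ``behind Lemma~\ref{lem:ClaOu2017A} was the following $L^p$ improving \emph{continuity} estimate'', namely Lemma~\ref{lem:ClaOu2017B},
\[
\bNorm{A_1^\gamma - \tau_y A_1^\gamma}{L^r\to L^{s'}} \lesssim |y|^\eta .
\]
Your treatment of the first conclusion (the measure bound $\sum_{P\in\ch(Q_0)}|P|\leq \tfrac12|Q_0|$) is correct and is exactly the standard stopping--time argument.

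For the second conclusion there is a genuine gap. Your claim that ``the $L^p$-improving behaviour gains a factor $2^{-\delta k}$ beyond the trivial $L^\infty\times L^1$ bound'' is not true as stated. A direct computation shows that at each fixed generation $k$ the $L^r\times L^s$ bound for $\sum_{\ell(Q)=2^{-k}\ell(Q_0)}A_Q$, combined with the stopping control $\ave{f}_{Q,r}\lesssim\ave{f}_{Q_0,r}$, $\ave{g}_{Q,s}\lesssim\ave{g}_{Q_0,s}$, yields a contribution of size $\sim |Q_0|\ave{f}_{Q_0,r}\ave{g}_{Q_0,s}$ \emph{uniformly in $k$}; there is no decay in $k$ from the smoothing estimate (Lemma~\ref{lem:ClaOu2017C}) alone, and the scale sum diverges. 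The $L^\infty$ truncation in your last paragraph does not help: even with $\|f\|_\infty\lesssim\ave{f}_{Q_0,r}$ one still gets the same uniform-in-$k$ bound.

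What actually produces the geometric decay is the H\"older continuity of Lemma~\ref{lem:ClaOu2017B}. In the Cladek--Ou argument one performs a Calder\'on--Zygmund type splitting of $f$ (and $g$) adapted to the stopping cubes; the mean-zero ``bad'' pieces $b_P$ supported on stopping cubes $P$ then interact with $A_Q$, $Q\supsetneq P$, with a gain $(\ell(P)/\ell(Q))^\eta$ coming precisely from the continuity estimate, and this is what makes the sum over scales converge. Your proposal never invokes Lemma~\ref{lem:ClaOu2017B}, so the mechanism that sums the scales is missing.
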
 
Behind Lemma \ref{lem:ClaOu2017A} was the following $L^p$ improving continuity estimate.
\begin{lem}[\cite{ClaOu2017}, Lemma 2.1.]\label{lem:ClaOu2017B} Let $\gamma$ be a monomial curve and $(1/r,1/s')\in \Omega(n).$ Then, there exists $\eta=\eta_{r,s}>0$ such that for all $\abs{y}\leq 1,$
	\begin{align}\label{eq:improving}
		\bNorm{A^{\gamma}_1-\tau_yA^{\gamma}_1}{L^r\to L^s}\lesssim \abs{y}^{\eta},
	\end{align}
	where $\tau_yf(x) = f(x-y)$ and
	\begin{align}\label{eq:A1}
	A_1^{\gamma}f(x) := \int_{\frac{1}{2}\leq \abs{t}<1}f(x-\gamma(t))\frac{\ud t}{t}.
	\end{align}
\end{lem}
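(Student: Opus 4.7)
The plan is to derive the continuity estimate by interpolation between two endpoint bounds: a ``trivial'' one with no decay in $y$ coming directly from the $L^p$-improving property of $A_1^{\gamma}$, and a ``Lipschitz'' one with linear decay $|y|$ obtained at a nearby pair of exponents by exploiting Fourier-side decay of the multiplier. Since $\Omega(n)$ is a convex region with non-empty interior, for any $(1/r, 1/s') \in \Omega(n)$ we may choose two nearby pairs $(1/r_0, 1/s_0'), (1/r_1, 1/s_1') \in \Omega(n)$ such that $(1/r, 1/s')$ lies on the segment joining them, say at parameter $\theta \in (0,1)$.

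At the first endpoint, by the $L^p$-improving estimate for $A_1^{\gamma}$ itself (which is the content of the sparse-domination range $\Omega(n)$ established in \cite{ClaOu2017} via $TT^*$ and van der Corput bounds on
\[
m(\xi) = \int_{1/2 \leq |t| < 1} e^{-i\xi \cdot \gamma(t)} \frac{\mathrm{d} t}{t}),
\]
the triangle inequality yields
\[
\bNorm{A_1^{\gamma}-\tau_y A_1^{\gamma}}{L^{r_0}\to L^{s_0}} \leq 2\bNorm{A_1^{\gamma}}{L^{r_0}\to L^{s_0}} \lesssim 1.
\]
At the second endpoint, the Fourier multiplier of the difference operator is $(1 - e^{-iy\cdot\xi})m(\xi)$, and one uses $|1-e^{-iy\cdot\xi}| \leq |y||\xi|$ for $|y\cdot\xi| \leq 1$ together with the extra room in the interior of $\Omega(n)$. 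Concretely, a Littlewood--Paley decomposition splits frequencies at the threshold $|\xi|\sim|y|^{-1}$: on low frequencies one uses the linear gain $|y||\xi|$ against $L^p$-improving at a slightly more regular pair, while on high frequencies one uses van der Corput decay $|m(\xi)| \lesssim |\xi|^{-\delta}$ for some $\delta>0$, trading integrability for the gain. Optimizing gives an estimate
\[
\bNorm{A_1^{\gamma}-\tau_y A_1^{\gamma}}{L^{r_1}\to L^{s_1}} \lesssim |y|.
\]

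Riesz--Thorin interpolation between these two bounds then produces
\[
\bNorm{A_1^{\gamma}-\tau_y A_1^{\gamma}}{L^{r}\to L^{s}} \lesssim |y|^{\theta},
\]
and taking $\eta = \theta > 0$ proves the claim. The main obstacle is the Lipschitz endpoint: the kernel of $A_1^{\gamma}$ is a singular measure supported on the curve, so one cannot differentiate it directly; the gain must be extracted from the oscillatory integral $m(\xi)$, and the argument only closes because $(1/r,1/s')$ lies strictly inside $\Omega(n)$, leaving room to absorb the loss $|\xi|$ against the van der Corput decay. A point on the boundary of $\Omega(n)$ would not allow this trade-off, which is why the estimate is formulated for the open region (and correspondingly for the interior points of $\Omega(n)$ appearing in the sparse-form range).
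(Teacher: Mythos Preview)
The paper does not prove this lemma; it is quoted verbatim from Cladek--Ou \cite{ClaOu2017} as their Lemma~2.1, so there is no in-paper argument to compare against.

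Your interpolation strategy is the standard route and is essentially what Cladek--Ou do. One simplification: rather than manufacturing a Lipschitz endpoint at an unspecified pair $(r_1,s_1)$ via a Littlewood--Paley splitting, take the second endpoint to be $(1/2,1/2)\in\Omega(n)$ directly. There Plancherel gives
\[
\bNorm{A_1^{\gamma}-\tau_y A_1^{\gamma}}{L^2\to L^2}
=\sup_{\xi}\,\babs{1-e^{-iy\cdot\xi}}\,\abs{m(\xi)}
\lesssim\sup_{\xi}\,\min(\abs{y}\abs{\xi},2)\,(1+\abs{\xi})^{-\delta}
\lesssim\abs{y}^{\delta},
\]
with $\delta>0$ the van der Corput exponent for $m$; interpolating this against the trivial bound $\lesssim 1$ at any other point of $\Omega(n)$ yields the claim for every interior point. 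Your assertion of a full linear gain $\abs{y}$ at the second endpoint is an overstatement---optimising the low/high frequency split you describe only produces $\abs{y}^{\delta/(1+\delta)}$ or similar---but this does not affect the conclusion, since any positive power survives interpolation with positive $\theta$. Your closing remark about boundary points is accurate and matches how the lemma is actually invoked downstream (Lemma~\ref{lem:pqcont}), where only interior exponent pairs are used.
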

Lemma \ref{lem:ClaOu2017B} in turn boiled down to the following $L^p$ improving estimate, see e.g. Christ \cite{Christ1998}, and Tao, Wright \cite{TaoWri2003}.
\begin{lem}\label{lem:ClaOu2017C} Let $\gamma$ be a monomial curve and $(1/r,1/s')\in \Omega(n).$ Then, there holds that 
	\begin{align}\label{eq:preimproving}
		\bNorm{A_1^{\gamma}}{L^r\to L^s}\lesssim_{r,s} 1.
	\end{align}
\end{lem}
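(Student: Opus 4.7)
The plan is to verify the $L^r\to L^s$ bound for $A_1^{\gamma}$ at sufficiently many extreme points of $\Omega(n)$ and then fill in the remainder by Riesz--Thorin interpolation; the key geometric input will be the non-vanishing of a Vandermonde-type Jacobian for iterates of $A_1^{\gamma}$. First I would dispatch the trivial $L^p\to L^p$ estimates: since the averaging is over the compact set $\{1/2\leq |t|<1\}$ where $1/t$ is bounded, $A_1^{\gamma}$ is convolution with a finite, compactly supported, mean-zero measure $\nu_{\gamma}$ on $\R^n$ obtained by pushing $\ud t/t$ forward under $\gamma$, and Young's inequality gives $A_1^{\gamma}\colon L^p\to L^p$ boundedly for every $p\in[1,\infty]$. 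This supplies the diagonal part of the Riesz diagram and, by interpolation, the region of $\Omega(n)$ close to it.

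The genuine $L^p$-improving gain sits at the non-trivial vertices $P_1 = \bigl(\tfrac{n^2-n+2}{n^2+n}, \tfrac{n-1}{n+1}\bigr)$ and $P_2 = \bigl(\tfrac{2}{n+1}, \tfrac{2n-2}{n^2+n}\bigr)$, which are exchanged by the duality $A_1^{\gamma}\leftrightarrow (A_1^{\gamma})^*$; it suffices to establish one, say $P_2$. The idea is iteration: the $n$-fold composition
\begin{align*}
	(A_1^{\gamma})^n f(x) = \int f\bigl(x - \Phi(t)\bigr)\prod_{i=1}^n\frac{\ud t_i}{t_i},\qquad \Phi(t) = \sum_{i=1}^n\gamma(t_i),
\end{align*}
with each $t_i$ integrated over $\{1/2\leq|t_i|<1\}$, is convolution with the pushforward of $\prod_i \ud t_i/t_i$ under $\Phi$. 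The Jacobian $\det D\Phi$ is a generalized Vandermonde determinant in the quantities $|t_i|^{\beta_j}$, and since the exponents $\beta_1<\cdots<\beta_n$ are pairwise distinct, this Jacobian is quantitatively bounded below off a small degeneracy set, rendering the kernel of $(A_1^{\gamma})^n$ an $L^\infty$ function and hence $(A_1^{\gamma})^n\colon L^1\to L^\infty$. Interpolating this endpoint against the diagonal $L^p\to L^p$ bounds on $A_1^{\gamma}$ and exploiting submultiplicativity of the operator norm under composition yields the target bound at $P_2$, after which Riesz--Thorin on the four extreme points fills in all of $\Omega(n)$.

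The main obstacle is the sharp quantitative analysis of the degeneracy set of the Vandermonde determinant, from which the exact coordinates of $P_1,P_2$ emerge; an overly crude bound degrades the exponent pair and falls short of the endpoint. The classical references of Christ and Tao--Wright handle this in a multilinear restricted weak-type framework, replacing explicit Jacobian estimates by geometric incidence counts on flow-outs of $\gamma$, and I would follow their route to keep the argument robust against the sign choices in the monomial parametrization of $\gamma$.
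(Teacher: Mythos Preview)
The paper does not prove this lemma; it is quoted as background from Christ and Tao--Wright and used only as a black box. Your sketch therefore goes well beyond what the paper offers.

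Your overall plan --- trivial diagonal bounds plus a genuine $L^p$-improving estimate at the off-diagonal vertices, combined by interpolation --- is the right shape, and it is indeed the strategy behind the cited references. However, the specific mechanism you propose for the vertex estimate contains a real gap. The claim that the $n$-fold composite $(A_1^\gamma)^n$ maps $L^1\to L^\infty$ is false: the Jacobian $\det D\Phi(t_1,\dots,t_n)$ is a generalized Vandermonde determinant that vanishes on the diagonal hypersurfaces $\{t_i=t_j\}$ (and their sign-reflected analogues), and near this degeneracy set the pushforward density blows up like the reciprocal of the Jacobian. The kernel of $(A_1^\gamma)^n$ is therefore not in $L^\infty$; already for $n=2$ and $\gamma(t)=(t,t^2)$ one computes $\det D\Phi=2(t_2-t_1)$, and the resulting density is unbounded. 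No amount of ``off a small set'' helps at the $L^1\to L^\infty$ endpoint, where a single singularity of the density destroys the bound. You seem to sense this in your final paragraph, but the middle of the argument overstates what the iteration actually delivers, and the interpolation step as written does not close.

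What actually works at the non-trivial vertices $P_1,P_2$ is a \emph{restricted weak-type} estimate, obtained either via Christ's method of refinements (iterated band structure together with sublevel-set bounds $|\{t:|\det D\Phi(t)|<\lambda\}|\lesssim\lambda^{c}$ for the Jacobian) or via the Tao--Wright multilinear framework; strong type on the open edges and in the interior of $\Omega(n)$ then follows by real interpolation. If you want a self-contained proof rather than a citation, the honest route is to carry out that sublevel-set analysis quantifying the degeneracy, not to assert the Jacobian is bounded below.
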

For this section we only need Lemma \ref{lem:ClaOu2017A}, but when we address sufficiency of compactness we have direct use for Lemmas \ref{lem:ClaOu2017B} and \ref{lem:ClaOu2017C}.
\begin{defn}
	A collection of sets $\mathscr{S}$ is said to be sparse, if there exists a constant $\eta\in(0,1]$ and a collection of pairwise disjoint major subsets 
	$$
	\mathcal{E}_{\mathscr{S}} = \{E_P: E_P\subset P\in\mathscr{S},\, \abs{E_P}>\eta\abs{P}\}.
	$$ 
\end{defn}
By the splitting \eqref{eq:Hsplit}, Lemma \ref{lem:ClaOu2017A} and a stopping time argument, it is a standard routine, see e.g. the proof of \cite[Theorem 1.]{ClaOu2017}, to obtain a sparse form domination of the operator $H_{\gamma}$ in terms of $\gamma$-cubes.
It is by now an equally standard routine, see e.g. \cite{LOR1}, to obtain the sparse form domination of the commutator, i.e. to prove Theorem \ref{thm:SDOM}, we omit further details.
We have use for the following John-Nirenberg inequality adapted to our setting.
\begin{lem}\label{lem:JNfrac} Let $\alpha \geq 0$ and $1\leq s < \infty.$ Then, there holds that 
	\begin{align*}
	 	\Norm{b}{\BMO^{\beta,\alpha}_{1}} \sim \sup_{Q\in\calR^{\beta}} \abs{Q}^{-\alpha/\abs{\beta}}\left(\fint_Q\abs{b-\ave{b}_Q}^s\right)^{1/s}.
	\end{align*}
\end{lem}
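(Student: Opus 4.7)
My plan is to reduce the equivalence to the classical (non-fractional) anisotropic John--Nirenberg inequality, using the monotonicity in cube size afforded by the assumption $\alpha\geq 0.$

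The direction $\lesssim$ is immediate from Jensen's inequality: for every $\beta$-cube $Q$ and every $s\geq 1,$
\[
\fint_Q|b-\ave{b}_Q|\leq \left(\fint_Q|b-\ave{b}_Q|^s\right)^{1/s},
\]
so multiplying by $\abs{Q}^{-\alpha/\abs{\beta}}$ and taking the supremum over $Q\in\calR^{\beta}$ yields this side of the bound.

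For the nontrivial direction $\gtrsim,$ set $N := \Norm{b}{\BMO^{\beta,\alpha}_{1}}$ and fix an arbitrary $Q_0\in\calR^{\beta}.$ The central observation is that $b|_{Q_0}$ has bounded mean oscillation in the classical (non-fractional) sense on $Q_0,$ with norm controlled by $N\abs{Q_0}^{\alpha/\abs{\beta}}.$ Indeed, since $\alpha\geq 0,$ any $\beta$-subcube $Q\subset Q_0$ satisfies $\abs{Q}\leq\abs{Q_0},$ hence
\[
\fint_Q\abs{b-\ave{b}_Q}\leq N\abs{Q}^{\alpha/\abs{\beta}}\leq N\abs{Q_0}^{\alpha/\abs{\beta}}=:M.
\]
It then remains to invoke the classical local John--Nirenberg inequality in the anisotropic setting: whenever $f$ satisfies $\fint_Q\abs{f-\ave{f}_Q}\leq M$ for every $\beta$-subcube $Q\subset Q_0,$ one has $\big(\fint_{Q_0}\abs{f-\ave{f}_{Q_0}}^s\big)^{1/s}\lesssim_s M.$ Applying this with $f=b,$ dividing by $\abs{Q_0}^{\alpha/\abs{\beta}},$ and taking the supremum over $Q_0\in\calR^{\beta}$ closes the argument.

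The only substantive ingredient is the classical anisotropic local JN inequality, which I would obtain by the usual Calder\'on--Zygmund stopping-time argument applied to the $\beta$-dyadic filtration of $Q_0$: the $\beta$-cube structure is a doubling dyadic system in which the standard proof (iterating stopping cubes to produce exponential decay of the level sets of $\abs{b-\ave{b}_{Q_0}}$) transfers verbatim. I do not anticipate any genuine obstacle in the $\alpha\geq 0$ regime; the assumption $\alpha\geq 0$ is precisely what makes the scale-monotonicity step work, and a prospective version with $\alpha<0$ (where $\abs{Q}^{\alpha/\abs{\beta}}$ \emph{grows} as $Q$ shrinks) would require a genuinely different argument.
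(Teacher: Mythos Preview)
Your proof is correct and takes essentially the same approach as the paper, which simply declares the result a ``standard proof by a stopping time argument in an anisotropic dyadic grid'' and cites references for the grid and the stopping-time machinery. Your reduction to the non-fractional case via the monotonicity $\abs{Q}^{\alpha/\abs{\beta}}\leq\abs{Q_0}^{\alpha/\abs{\beta}}$ is a clean organizational choice that makes the role of the hypothesis $\alpha\geq 0$ explicit, but the substantive content---the anisotropic John--Nirenberg inequality obtained by stopping time---is identical.
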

\begin{proof} Standard proof by a stopping time argument in an anisotropic dyadic grid, see e.g. \cite[Equation (2.1)]{ClaOu2017} for a description of the grid and \cite[Appendix A. Theorem A.2.]{HOS2022} for the stopping time argument.
\end{proof}
\begin{fig}[h]
	\centering
	\includegraphics[scale=0.9]{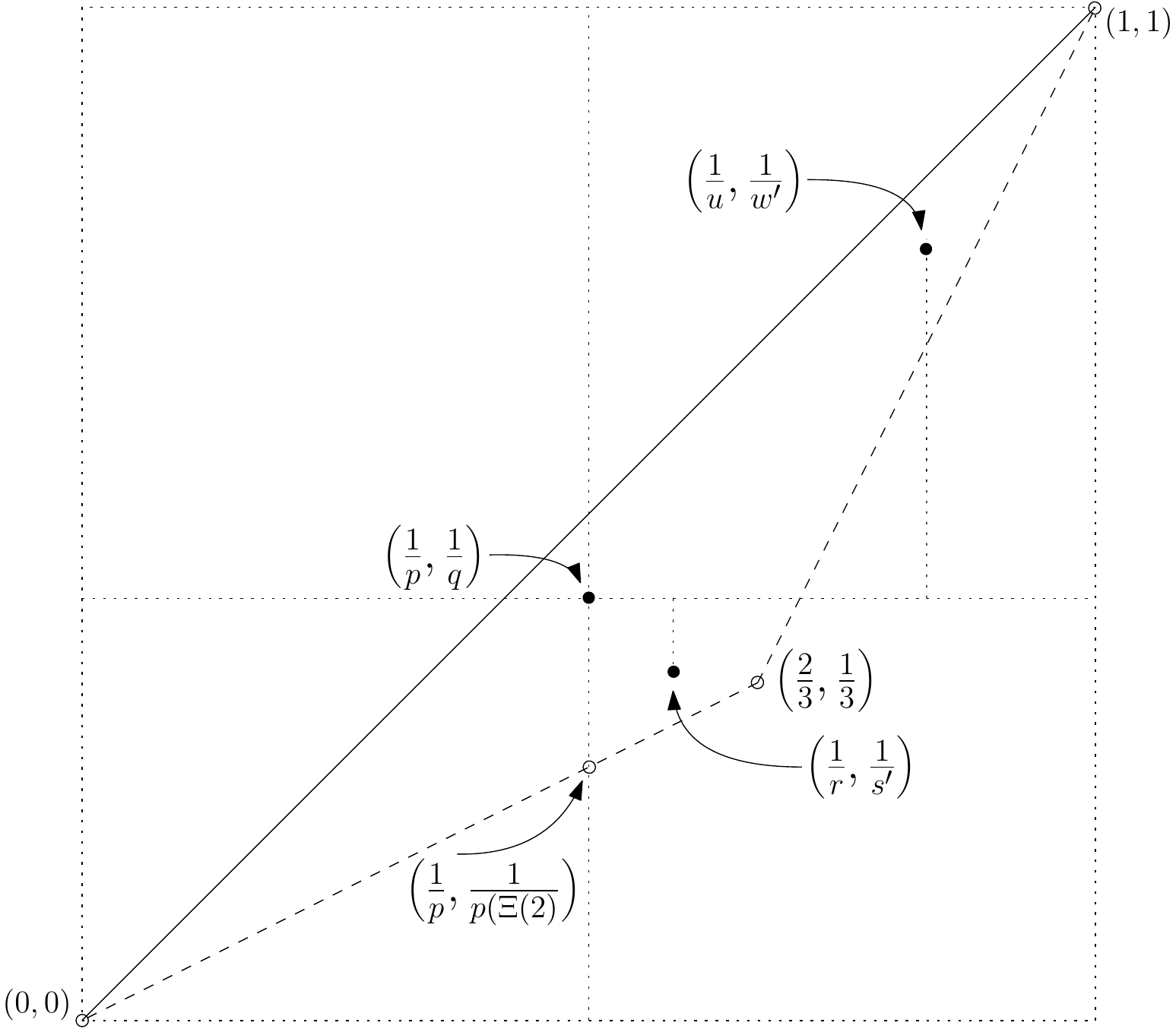}
	\caption{Showing $\Xi(2)$ and the choice of exponents $(1,r,1/s')$ in the proof of Theorem \ref{thm:UPfrac} and of $(1/u,1/w')$ in the proof of Lemma \ref{lem:pqcont}.
	} 
	\label{fig:ExpXi}
\end{fig}
\begin{proof}[Proof of Theorem \ref{thm:UPfrac}] 
	By the assumption
	$(1/p,1/q)\in \Xi(n) $
	let $(1/r,1/s')\in\Omega(n)$ be such that $1/p<1/r$ and $1/q>1/s',$ equivalently $r<p$ and $s<q',$ see Figure \ref{fig:ExpXi}.
By Proposition \ref{thm:SDOM} it is enough to bound $A_{b}^{r,s}[\mathscr{S}^{\gamma}]$ and $A_{b}^{r,s*}[\mathscr{S}^{\gamma}].$ Let $r<r_+<p$ and $1/r = 1/r_++1/r_+'.$ By H\"{o}lder's inequality and Lemma \ref{lem:JNfrac},
	\begin{align*}
		\bave{(b-\ave{b}_Q)f}_{Q,r}\leq \bave{b-\ave{b}_Q}_{Q,r_+^{'}}	\bave{f}_{Q,r_+}	\lesssim_{r_+^{'}}\Norm{b}{\BMO^{\gamma,\alpha}} \abs{Q}^{\alpha/\abs{\beta}} \bave{f}_{Q,r_+},
	\end{align*}
and thus 
\[
	A_{b}^{r,s}[\mathscr{S}^{\gamma}](f,g) \lesssim_{r_+^{'}}\Norm{b}{\BMO^{\gamma,\alpha}}  \sum_{Q\in \mathscr{S}^{\gamma}} \abs{Q}^{1+\alpha/\abs{\beta}} \bave{f}_{Q,r_+}\bave{g}_{Q,s}.
\]
	Then, using $1+\alpha/\abs{\beta} = 1/p+1/q',$ H\"{o}lder's inequality and $\Norm{\cdot}{\ell^q}\leq \Norm{\cdot}{\ell^p},$ we find
	\begin{align*}
	 \sum_{Q\in \mathscr{S}^{\gamma}} \abs{Q}^{1+\alpha/\abs{\beta}} \bave{f}_{Q,r_+}\bave{g}_{Q,s}
		&\leq \Big(  \sum_{Q\in \mathscr{S}^{\gamma}} \bave{f}_{Q,r_+}^{q}\abs{Q}^{\frac{q}{p}} \Big)^{\frac{1}{q}}\Big(  \sum_{Q\in \mathscr{S}^{\gamma}} \bave{g}_{Q,s}^{q'}\abs{Q} \Big)^{\frac{1}{q'}} \\ 
		&\leq  \Big(  \sum_{Q\in \mathscr{S}^{\gamma}} \bave{f}_{Q,r_+}^{p}\abs{Q} \Big)^{\frac{1}{p}}\Big(  \sum_{Q\in \mathscr{S}^{\gamma}} \bave{g}_{Q,s}^{q'}\abs{Q} \Big)^{\frac{1}{q'}} \\ 
		&\lesssim \Norm{M_{r_+}^{\gamma}}{L^p\to L^p}\Norm{M_s^{\gamma}}{L^{q'}\to L^{q'}}\cdot \Norm{f}{L^p}\Norm{g}{L^{q'}}.
	\end{align*}
	In the last estimate we used sparseness. Above  $M_u^{\gamma}f:= \sup_{Q\in\calR^{\gamma}}1_Q\ave{\psi}_{Q,u}$ and the finiteness of $\Norm{M_{r_+}^{\gamma}}{L^p\to L^p}$ and $\Norm{M_s^{\gamma}}{L^{q'}\to L^{q'}}$
	follow from $r_+<p$ and $s<q'.$ The estimate for $A_{b}^{r,s*}[\mathscr{S}^{\gamma}](f,g)$ is completely analogous.
\end{proof}


\subsection{Compactness}
In this section we couple the proof outline given in Lacey and Li \cite{LacLi2021} and detailed by us \cite{HOS2022} and prove Theorem \ref{thm:suff}. 
We show that
\begin{align}\label{eq:goal}
	H_{\gamma} = H_{\gamma,c} + H_{\gamma,\varepsilon},\qquad [b,H_{\gamma,c}]\in \calK(L^p,L^q),\qquad \lim_{\varepsilon\to 0}\Norm{	[b,H_{\gamma,\varepsilon}]}{L^p\to L^q} = 0.
\end{align}
Then we are done by the fact that compact linear operators form a closed subspace of all bounded linear operators. 

Given any function $\psi$ and $r>0,$ define
\begin{align}\label{eq:bump0} 
	\psi_x^{0,r}(y) = \psi\big(r^{-\beta}(x-y)\big)
\end{align}
and for $0<r\leq R<\infty$ partition unity as
\begin{align*}
	1 = \psi_x^{0,r} +  \psi_x^{r,R} +  \psi_x^{R,\infty},\qquad  \psi_x^{r,R} :=  \psi_x^{0,R}-  \psi_x^{0,r},\qquad \psi_x^{R,\infty} := 1 - \psi_x^{0,R}.
\end{align*}
We denote $\psi^{r,R} := \psi_0^{r,R}.$ 
Now, let
\begin{align}\label{eq:bump}
	\varphi\in C^1(\R^n;[0,1]),\qquad 	\varphi(x) = \begin{cases}
		1,\quad \abs{x} \leq\frac{1}{2}, \\ 
		0,\quad \abs{x}\geq 1.
	\end{cases}
\end{align}
Then, we split
\begin{align*}
	H_{\gamma} &= \big(\varphi^{0,R}+\varphi^{R,\infty}\big)H_{\gamma} \big(\varphi^{0,R}+\varphi^{R,\infty}\big) \\ 
	&= \varphi^{0,R}H_{\gamma} \varphi^{0,R} 
	+ \left(  \varphi^{R,\infty}H_{\gamma} \varphi^{R,\infty}
	+ \varphi^{R,\infty}H_{\gamma} \varphi^{0,R}
	+ \varphi^{0,R}H_{\gamma} \varphi^{R,\infty}  \right) =  
	H_{\gamma}^{0,R} + H_{\gamma}^{R,\infty}.
\end{align*}
The bracketed term $H_{\gamma}^{R,\infty}$ is good and the other we split further,
\begin{align*}
	H_{\gamma}^{0,R} = 	H_{\gamma}^{0,R}\big( \chi_x^{0,r} + 	\chi_x^{r,cR} +	\chi_x^{cR,\infty} \big) = 	H_{\gamma}^{0,R}\chi_x^{0,r} + 	H_{\gamma}^{0,R}\chi_x^{r,cR},
\end{align*}
where 
\begin{align}\label{eq:chi}
	\chi_x^{a,b} := (1_{B(0,1)})_x^{a,b},
\end{align} 
and we chose  $c\gtrsim_{\beta}1$ to be so large that $H_{\gamma}^{0,R}\chi_x^{cR,\infty}  = 0.$ 
Indeed, by abusing notation, if 
$$
H_{\gamma}^{0,R}\chi_x^{cR,\infty}f(x)=  \varphi^{0,R}(x)H_{\gamma}\left( \varphi^{0,R}(y)\chi_x^{cR,\infty}(y)\right)f(x)\not=0,
$$ then $\abs{x}_{\beta},\abs{y}_{\beta}\lesssim R,$ but the integral is non-zero iff $\norm{x-y}{\beta}\gtrsim cR,$ which by the quasi triangle inequality is impossible, provided that $c$ is large.
In total, we have 
\begin{align}\label{eq:split}
	H_{\gamma} =  H_{\gamma,c} + H_{\gamma,\varepsilon},\qquad  H_{\gamma,c} = 	H_{\gamma}^{0,R}\chi_x^{r,cR},\qquad H_{\gamma,\varepsilon} = 	H_{\gamma}^{0,R}\chi_x^{0,r} + H_{\gamma}^{R,\infty},
\end{align}
where we now take on the convention of demanding that $\varepsilon \sim r \sim R^{-1}.$

Next, we formulate local $L^p$-to-$L^q$  boundedness and continuity estimates.
\begin{lem}\label{lem:pqcont} Let $(1/p,1/q)\in \Xi(n).$ Let $Q=[-1,1]^{n}.$ Then, there exists an absolute constant $c_{\gamma}>0$ so that
	\begin{align}\label{eq:pqbdd}
		\bNorm{A^{\gamma}_{1}}{L^p(c_{\gamma}Q)\to L^q(Q)}\lesssim_{p,q} 1.
	\end{align}	
	Moreover, there exists $\eta=\eta_{p,q}>0$ such that for all $\abs{y}\leq 1$ there holds that 
	\begin{align}\label{eq:pqimproving}
		\bNorm{A^{\gamma}_1-\tau_yA^{\gamma}_1}{L^p(c_{\gamma}Q)\to L^q(Q)}\lesssim_{p,q} \abs{y}^{\eta}.
	\end{align}
\end{lem}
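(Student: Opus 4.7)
The plan is to derive the local estimates \eqref{eq:pqbdd} and \eqref{eq:pqimproving} as consequences of the global $L^u \to L^w$ bounds in Lemmas \ref{lem:ClaOu2017C} and \ref{lem:ClaOu2017B}, by exploiting the fact that $A_1^{\gamma}$ is essentially a local operator at scale one, so that H\"older's inequality on a compact set bridges the integration exponents.

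First I would set up the localization: for $x \in Q = [-1,1]^n$ and $|t| \in [1/2,1]$ we have $|\gamma(t)|_{\beta} \leq 1$, hence by the comparability of the quasi-norms in \eqref{eq:quasinorms} the point $x - \gamma(t)$ lies in some isotropic enlargement $c_{\gamma} Q$ with $c_{\gamma} \gtrsim_{\beta} 1$. Thus $A_1^{\gamma} f(x) = A_1^{\gamma}(f \cdot 1_{c_{\gamma} Q})(x)$ for all $x \in Q$, and after enlarging $c_{\gamma}$ by a bounded factor the same locality holds for $A_1^{\gamma} - \tau_y A_1^{\gamma}$ whenever $|y| \leq 1$.

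Next, since $(1/p, 1/q) \in \Xi(n)$, the geometry of $\Omega(n)$ (see Figure \ref{fig:ExpXi}) permits a choice of auxiliary exponents $(1/u, 1/w') \in \Omega(n)$ satisfying $u \leq p$ and $w \geq q$, equivalently $1/u \geq 1/p$ and $1/w' \geq 1/q'$. With such $(u,w)$ in hand, both \eqref{eq:pqbdd} and \eqref{eq:pqimproving} reduce to the chain
\[
\|S f\|_{L^q(Q)} \lesssim \|S f\|_{L^w(Q)} \leq \|S f\|_{L^w(\R^n)} \lesssim \Theta(y)\, \|f\|_{L^u(c_{\gamma} Q)} \lesssim \Theta(y)\, \|f\|_{L^p(c_{\gamma} Q)},
\]
where $S = A_1^{\gamma}$ and $\Theta(y) = 1$ for \eqref{eq:pqbdd}, whereas $S = A_1^{\gamma} - \tau_y A_1^{\gamma}$ and $\Theta(y) = |y|^{\eta_{u,w}}$ for \eqref{eq:pqimproving}. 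The outer inequalities are H\"older on the compact sets $Q$ and $c_{\gamma} Q$ (using $q \leq w$ and $u \leq p$, respectively), while the middle $\lesssim$ is Lemma \ref{lem:ClaOu2017C}, respectively Lemma \ref{lem:ClaOu2017B}. One then sets $\eta := \eta_{u,w}$ to conclude.

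The only genuinely nontrivial point is the geometric verification that the region $\Omega(n) \cap \{x \geq 1/p,\, y \geq 1/q'\}$ has nonempty interior for every $(1/p, 1/q) \in \Xi(n)$. For $(1/p, 1/q)$ in the interior of $\Omega(n)$ this is immediate by taking nearby interior points; for $(1/p, 1/q)$ on the diagonal one must inspect the shape of $\Omega(n)$ and confirm that it leaves enough room above the line $\{y = 1/q'\}$. I expect this geometric check, together with matching the picture in Figure \ref{fig:ExpXi}, to be the main bookkeeping effort; the whole analytic content is then absorbed into the two cited Lemmas.
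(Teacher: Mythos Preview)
Your proposal is correct and follows essentially the same route as the paper: localize, then sandwich the global $L^u\to L^w$ bound from Lemmas \ref{lem:ClaOu2017C} and \ref{lem:ClaOu2017B} between two applications of H\"older on the compact sets $Q$ and $c_\gamma Q$. The only simplification worth noting is that the geometric check you flag as ``the main bookkeeping effort'' is in fact immediate: since the diagonal segment from $(0,0)$ to $(1,1)$ lies in $\Omega(n)$, one may simply take $(1/u,1/w')=(t,t)$ with $t$ close enough to $1$ that $u<p$ and $w>q$, which works uniformly for every $(1/p,1/q)\in\Xi(n)$ without any case distinction.
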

\begin{proof} 
	We check \eqref{eq:pqimproving}. Pick a point $(1/u,1/w')\in\Omega(n)$ so close to $(1,1)$ that both $u<p$ and $q<w$ hold, see Figure \ref{fig:ExpXi}. By H\"{o}lder's inequality in the first and last steps (recall that $\abs{Q}\sim 1$) and Lemma \ref{lem:ClaOu2017B} we find
	\begin{align*}
		\Norm{( A^{\gamma}_1-\tau_yA^{\gamma}_1)f}{L^q(Q)} &\lesssim \Norm{( A^{\gamma}_1-\tau_yA^{\gamma}_1)f}{L^w(Q)}=\Norm{( A^{\gamma}_1-\tau_yA^{\gamma}_1)(f1_{c_{\gamma}Q})}{L^w(Q)}\\ 
		&\leq \Norm{( A^{\gamma}_1-\tau_yA^{\gamma}_1)(f1_{c_{\gamma}Q})}{L^w} \lesssim_{w,u} \abs{y}^{\eta}\Norm{f1_{c_{\gamma}Q}}{L^u} \lesssim \abs{y}^{\eta}\Norm{f}{L^p(c_{\gamma}Q)}.
	\end{align*}
	The claim \eqref{eq:pqbdd} is obtained analogously, now using Lemma \ref{lem:ClaOu2017C}.
\end{proof}
The bounds \eqref{eq:pqbdd} and \eqref{eq:pqimproving} correspond to equiboundedness and equicontinuity.
\begin{lem}[Fréchet-Kolmogorov]\label{lem:FreKol} Let $q\in (0,\infty).$ Then $\calF\subset L^q$ is relatively compact if and only if $\calF$ is
	\begin{enumerate}
		\item equibounded: 
		\[		
		\sup_{f\in\calF}\Norm{f}{L^q} \lesssim 1,
		\]
		\item equicontinuous: 
		\[
		\lim_{\abs{y}\to 0}\sup_{f\in\calF}\Norm{f-\tau_y f}{L^q} =0,
		\]
		\item and equivanishing: 
		\[
		\lim_{A\to\infty}\sup_{f\in\calF}\Norm{1_{B(0,A)^c}f}{L^q} = 0.
		\]
	\end{enumerate}
\end{lem}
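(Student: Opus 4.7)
The plan is to prove the two directions of the equivalence separately. For necessity, we derive (1), (2), (3) from the assumption that $\calF$ is relatively compact, i.e.\ totally bounded in $L^q$. Total boundedness immediately gives (1): the closure of $\calF$ is bounded since it is contained in a finite union of balls of radius $1$. For (2) and (3), fix $\varepsilon>0$ and a finite $\varepsilon$-net $\{f_1,\dots,f_N\}$ for $\calF$. Each single function $f_i$ is continuous under translation ($\|\tau_y f_i - f_i\|_{L^q}\to 0$ as $|y|\to 0$, by density of $C_c$ in $L^q$) and satisfies $\|1_{B(0,A)^c}f_i\|_{L^q}\to 0$ as $A\to\infty$ (by dominated convergence, again using $q<\infty$). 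Taking the maximum over the finite list $i=1,\dots,N$ and triangulating against the net gives a uniform-in-$f$ estimate; for $q\geq 1$ this uses the usual triangle inequality, and for $q\in(0,1)$ the quasi-triangle inequality $\|f+g\|_{L^q}^q\leq \|f\|_{L^q}^q+\|g\|_{L^q}^q,$ which still closes after sending $\varepsilon\to 0$.

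For sufficiency, the plan is to show $\calF$ is totally bounded in $L^q$ by approximating it, uniformly in $f\in\calF$, by the image of a fixed relatively compact operator. Fix a standard mollifier $\phi$ with $\int\phi = 1$ and $\supp\phi\subset B(0,1),$ set $\phi_\delta(z)=\delta^{-n}\phi(z/\delta)$, and define
\[
T_{\delta,A}f(x) = 1_{B(0,A)}(x)\,(f*\phi_\delta)(x).
\]
The first subgoal is to prove
\[
\sup_{f\in\calF}\Norm{T_{\delta,A}f-f}{L^q}\xrightarrow[\delta\to 0,\,A\to\infty]{} 0.
\]
Splitting $T_{\delta,A}f-f = (f*\phi_\delta - f) - 1_{B(0,A)^c}(f*\phi_\delta) - 1_{B(0,A)^c}f$ into three pieces, the first is handled by writing $f*\phi_\delta - f$ as an average of $\tau_y f - f$ for $|y|\leq \delta$ and invoking equicontinuity (2); the last uses equivanishing (3) directly; the middle piece is controlled using a slightly enlarged ball together with equiboundedness and equivanishing.

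The second subgoal is to show that for every fixed $\delta>0$ and $A<\infty$, the family $T_{\delta,A}\calF$ is relatively compact in $L^q$. The functions in $T_{\delta,A}\calF$ are supported in $B(0,A)$ and, by Young's inequality (for $q\geq 1$) or a direct pointwise argument via $\|\phi_\delta\|_{L^{q'}}$-type estimates (adapted for $q<1$), are uniformly bounded in $L^\infty$ and in fact uniformly Lipschitz, with constants depending on $\delta$ and the equibounded bound from (1). By Arzel\`a--Ascoli, $T_{\delta,A}\calF$ is relatively compact in $C(B(0,A))$, hence in $L^q(B(0,A))\hookrightarrow L^q(\R^n)$. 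Combining the two subgoals, for any $\varepsilon>0$ we can pick $(\delta,A)$ so that $T_{\delta,A}\calF$ is within $\varepsilon$ of $\calF$ and is itself totally bounded; a standard $2\varepsilon$-net argument then shows $\calF$ is totally bounded.

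The main obstacle I expect is the first subgoal, specifically keeping the three error terms uniform in $f\in\calF$: the first really needs equicontinuity (2), and neither equiboundedness nor equivanishing alone suffices. A minor technical nuisance is the case $q\in(0,1)$, where $L^q$ is only a complete metric vector space; all the triangle inequalities have to be replaced by their $q$-power versions, and the Young-type bounds needed to ensure $T_{\delta,A}f$ is Lipschitz in $x$ have to be verified by a direct computation rather than by quoting the usual convolution inequality.
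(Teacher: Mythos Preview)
The paper does not prove this lemma; it is stated as the classical Fr\'echet--Kolmogorov criterion and used as a black box. Your argument is the standard textbook proof (mollify, truncate, invoke Arzel\`a--Ascoli), and for $q\geq 1$ it is correct as written.

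There is, however, a genuine gap in the case $q\in(0,1)$ that you flag but do not resolve. For such $q$, functions in $L^q(\R^n)$ need not be locally integrable (e.g.\ $|x|^{-\alpha}$ near the origin with $n\leq \alpha<n/q$), so the convolution $f*\phi_\delta(x)=\int f(x-y)\phi_\delta(y)\,\ud y$ is not defined pointwise, and your operator $T_{\delta,A}$ does not make sense on all of $\calF$. This is not a matter of replacing Young's inequality by a direct computation; the integral simply may diverge. One fix is to replace the linear mollifier by the nonlinear averaging $x\mapsto\big(\fint_{B(x,\delta)}|f|^q\big)^{1/q}$, or to first reduce to compactly supported $f$ via (3) and then observe that equicontinuity (2) forces enough local regularity; either route requires a separate argument. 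That said, the paper's standing assumption is $q\in(1,\infty)$, so for its purposes the $q\geq 1$ range you handle cleanly is all that is needed.
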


\begin{prop}\label{prop:Hc} Let $(1/p,1/q)\in\Xi(n)$ and $b\in L^{\infty}_{\loc}.$ Then 
	$[b,	H_{\gamma,c}] \in \calK(L^p,L^q).$
\end{prop}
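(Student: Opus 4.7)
The plan is to prove the stronger statement that $H_{\gamma,c}$ itself lies in $\calK(L^p,L^q)$; compactness of $[b,H_{\gamma,c}]$ then follows easily. The outer and inner cutoffs $\varphi^{0,R}$ localize both the range of $H_{\gamma,c}$ and the effective domain of its argument into a fixed bounded set $B$, so $[b,H_{\gamma,c}]$ only depends on the values of $b$ on $B$; the hypothesis $b\in L^\infty_{\loc}$ then gives $b\in L^\infty(B)$. After replacing $b$ by $b\mathbf{1}_B\in L^\infty(\R^n)$ (which does not alter the commutator), $M_b$ becomes a bounded multiplier on both $L^p$ and $L^q$, so $[b,H_{\gamma,c}]=M_b H_{\gamma,c}-H_{\gamma,c}M_b$ is the difference of two compositions of the compact operator $H_{\gamma,c}$ with a bounded multiplication, and is therefore itself compact.

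For the key claim $H_{\gamma,c}\in\calK(L^p,L^q)$, I would factor $H_{\gamma,c}=M_{\varphi^{0,R}}\,A\,M_{\varphi^{0,R}}$ with the \emph{translation-invariant} operator
\[
Af(x)=\int_{r\lesssim |t|\lesssim cR} f(x-\gamma(t))\,\frac{\ud t}{t},
\]
and decompose $A$ into a finite, $\sim\log(R/r)$-term dyadic sum of single-scale curve averages, each an anisotropic rescaling of $A_1^\gamma$. I then verify the Fr\'echet--Kolmogorov criterion (Lemma~\ref{lem:FreKol}) on $\calF=\{H_{\gamma,c}f:\Norm{f}{L^p}\leq 1\}$: equiboundedness follows from the $L^p$-to-$L^q$ boundedness of each single scale (Lemma~\ref{lem:pqcont}, estimate \eqref{eq:pqbdd}) summed over the finitely many scales, and equivanishing is immediate from $\supp(H_{\gamma,c}f)\subset\supp\varphi^{0,R}$, a fixed bounded set.

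The equicontinuity is the technical core. Working at the operator level and using $\tau_y A=A\tau_y$, one has $\tau_y H_{\gamma,c}=M_{\tau_y\varphi^{0,R}}AM_{\tau_y\varphi^{0,R}}\tau_y$, and telescoping yields
\[
\tau_y H_{\gamma,c}-H_{\gamma,c}=\bigl[(M_{\tau_y\varphi^{0,R}}-M_{\varphi^{0,R}})AM_{\tau_y\varphi^{0,R}}+M_{\varphi^{0,R}}A(M_{\tau_y\varphi^{0,R}}-M_{\varphi^{0,R}})\bigr]\tau_y+M_{\varphi^{0,R}}AM_{\varphi^{0,R}}(\tau_y-I).
\]
The two bracketed terms have $L^p\to L^q$ operator norm $\lesssim |y|$, since $\varphi^{0,R}\in C^1$ gives $\Norm{\tau_y\varphi^{0,R}-\varphi^{0,R}}{\infty}\lesssim |y|$. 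For the last term, moving $\tau_y-I$ past $M_{\varphi^{0,R}}$ at the cost of another $\lesssim |y|$ commutator error and using translation invariance to recognize $(\tau_y-I)A=\tau_y A-A$, estimate \eqref{eq:pqimproving} of Lemma~\ref{lem:pqcont} (applied scale-by-scale and summed) delivers operator norm $\lesssim |y|^\eta$. Altogether $\Norm{\tau_y H_{\gamma,c}-H_{\gamma,c}}{L^p\to L^q}\lesssim |y|^\eta\to 0$ uniformly over the unit $L^p$-ball, giving equicontinuity.

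The main obstacle is precisely this last bound. A naive estimate $\Norm{A(g-\tau_y g)}{L^q}\leq \Norm{A}{L^p\to L^q}\Norm{g-\tau_y g}{L^p}$ fails because $\Norm{g-\tau_y g}{L^p}$ is not uniformly small over $\Norm{g}{L^p}\leq 1$. What rescues the argument is that \eqref{eq:pqimproving} encodes the genuine smoothing of a single-scale curve average at the operator-norm level, producing a quantitative $|y|^\eta$ gain independent of the input; this is the technical heart of Cladek--Ou's Lemma~\ref{lem:ClaOu2017B}.
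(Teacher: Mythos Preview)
Your proposal is correct and follows essentially the same route as the paper: reduce (via the bounded localized multiplier $b\varphi^{0,R}$) to showing that the truncated operator $\varphi^{0,R}H_\gamma\chi_x^{r,cR}$ is compact, then verify Fr\'echet--Kolmogorov by a finite dyadic-scale decomposition with equiboundedness and equicontinuity coming from the $L^p$-improving estimates \eqref{eq:pqbdd} and \eqref{eq:pqimproving} of Lemma~\ref{lem:pqcont}. The only cosmetic difference is that the paper strips off the inner cutoff first and works with $\widetilde{H}_\gamma=\varphi^{0,R}H_\gamma\chi_x^{r,cR}$, so its equicontinuity splitting has two terms ($I$ and $II$) rather than your three-term telescope; the extra commutator term you produce is harmless and handled by the same $C^1$ bound on $\varphi^{0,R}$.
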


\begin{proof}	
	Directly from the assumptions, the multipliers $\varphi^{0,R},$ $\supp(\varphi^{0,R})b$ and $\varphi^{0,R}b$
	are $L^s$-to-$L^s$ bounded, for all $s>0.$ Thus, writing out
	\begin{align*}
		[b,	H_{\gamma,c}]  = b\varphi^{0,R}H_{\gamma,c}\chi_{x}^{r,cR}\varphi^{0,R} - \varphi^{0,R}H_{\gamma,c}\chi_{x}^{r,cR}\varphi^{0,R} b
	\end{align*} 
	and using the fact that a sufficient condition for $S\circ S'$ to be compact is that both $S,S'$ are bounded and at least  one is compact,
	it is enough to show that 
	$$
	\widetilde{H}_{\gamma} = \varphi^{0,R}H_{\gamma}\chi_{x}^{r,cR}\in \calK(L^p,L^q).
	$$
	
	Let there be a family $\calF = \{f_j\}_j$ such that $\sup_j\Norm{f_j}{L^p}\lesssim1,$ and by Lemma \ref{lem:FreKol} we show that $\widetilde{H}_{\gamma}\calF\subset L^q$ is equicontinuous, equibounded and equivanishing.
	
	Denote $Q= R^{\beta}[-1,1]^n\supset \supp(\varphi^{0,R})$ and suppose without loss of generality that $r = 2^{-n}$ and $cR=2^{n}$ are dyadic. By splitting
	\begin{align*}
		\widetilde{H}_{\gamma} = 	\sum_{-n+1\leq i \leq n}\widetilde{H}_{\gamma}^i,\qquad \widetilde{H}_{\gamma}^i := \varphi^{0,R}H_{\gamma}\chi_{x}^{2^{i-1},2^{i}},
	\end{align*}
	it is enough to show that each $\widetilde{H}_{\gamma}^i\calF$  is equicontinuous, equibounded and equivanishing, since these conditions are preserved under linear combinations.
	
	The equivanishing condition
	\[
	\lim_{A\to\infty}\sup_{j}\bNorm{1_{B(0,A)^c}	\widetilde{H}_{\gamma}^if_j }{L^q} = 0,
	\]
	clearly holds when $A$ is so large that $\supp(\varphi^{0,R})\subset B(0,A),$ as then $1_{B(0,A)^c}	\widetilde{H}_{\gamma}^if_j=0.$	
	
	For equiboundedness we have
	\begin{equation}\label{eq:Hc:equi}
		\begin{split}
			\bNorm{\widetilde{H}_{\gamma}^if_j }{L^q} \leq 	\bNorm{H_{\gamma}\chi_{x}^{2^{i-1},2^{i}}f_j }{L^q(Q)}
			\overset{*}{\lesssim}_{p,q,\gamma,R} \bNorm{f_j}{L^p(c_{\gamma}^{\beta}Q)}\leq \bNorm{f_j}{L^p},
		\end{split}
	\end{equation}
	where the $*$ estimate follows from the line \eqref{eq:pqbdd} of Lemma \ref{lem:pqcont} by rescaling everything to the $\gamma$-cube $Q.$ We give full details. Denote $\lambda := 2^i$ and write
	\begin{align*}
		H_{\gamma}\chi_{x}^{2^{i-1},2^{i}}f_j(x) = A^{\gamma}_1(f\circ \lambda^{\beta})\circ\lambda^{-\beta}.
	\end{align*}
	Then,
	\begin{align*}
		 	\bNorm{H_{\gamma}\chi_{x}^{2^{i-1},2^{i}}f_j }{L^q(Q)} &=  	\bNorm{A^{\gamma}_1(f_j\circ \lambda^{\beta})\circ\lambda^{-\beta}}{L^q(Q)} \\ 
		 	&= \lambda^{\abs{\beta}/q}	\bNorm{A^{\gamma}_1(f_j\circ \lambda^{\beta})}{L^q(\lambda^{-\beta}_0Q)} \lesssim_{q,\gamma,R}\bNorm{A^{\gamma}_1(f_j\circ \lambda^{\beta})}{L^q(\lambda^{-\beta}_0Q)}.
	\end{align*}
Write $\lambda^{-\beta}_0Q =\cup_{m}Q_m$ as a finitely overlapping union of translates of $[-1,1]^{n}.$ By the translation invariance of $A^{\gamma}_1$ and \eqref{eq:pqbdd} of Lemma \ref{lem:pqcont} we obtain 
	\begin{align*}
		\bNorm{A^{\gamma}_1(f_j\circ \lambda^{\beta})}{L^q(\lambda^{-\beta}_0Q)}  &\sim_q \sum_m 	\bNorm{A^{\gamma}_1(f_j\circ \lambda^{\beta})}{L^q(Q_m)}\lesssim \sum_m \bNorm{f_j\circ \lambda^{\beta} }{L^p(c_{\gamma}^{\beta}Q_m)} \\ 
		&=  \sum_m c_{\gamma}^{\abs{\beta}/p} \bNorm{f_j\circ \lambda^{\beta} \circ c_{\gamma}^{\beta}}{L^p(Q_m)}\sim_{p,\gamma} \bNorm{f_j\circ \lambda^{\beta} \circ c_{\gamma}^{\beta}}{L^p(\lambda^{-\beta}_0Q)} \\
		&\sim_{p,\gamma}\bNorm{f_j\circ c_{\gamma}^{\beta}}{L^p(Q)}\sim_{p,\gamma} \bNorm{f_j}{L^p(c_{\gamma}^{\beta}Q)}
	\end{align*}
and thus \eqref{eq:Hc:equi} is established.

	To check equicontinuity, we show that
	\begin{align}\label{eq:kred1}
		\bNorm{(\widetilde{H}_{\gamma}^i -\tau_y\widetilde{H}_{\gamma}^i)f_j}{L^q}\lesssim_{p,q,\gamma,\varphi,R} \max\left(\abs{y}, \abs{y}_{\beta}^{\eta_{p,q}} \right)\Norm{f_j}{L^p}
	\end{align} 
for all sufficiently small $\abs{y}.$ Split, 
	\begin{align*}
		\widetilde{H}_{\gamma}^i -\tau_y	\widetilde{H}_{\gamma}^i  &= \left(\varphi^{0,R}-\tau_y\varphi^{0,R}\right) H_{\gamma}\chi_{x}^{2^{i-1},2^i} + \tau_y\varphi^{0,R}\left( H_{\gamma}\chi_{x}^{2^{i-1},2^i} - \tau_yH_{\gamma}\chi_{x}^{2^{i-1},2^i} \right) \\ 
		&= I + II.
	\end{align*}
	By \eqref{eq:Hc:equi} and $\varphi^{0,R}\in C^1_c,$
	we find
	\begin{equation}\label{eq:ec1}
		\begin{split}
			\Norm{If_j}{L^q} &\leq \Norm{\varphi^{0,R}-\tau_y\varphi^{0,R}}{L^{\infty}}\bNorm{H_{\gamma}\chi_{x}^{2^{i-1},2^{i}}f_j }{L^q(Q)} \\
			&\lesssim_{p,q,\gamma,R} \Norm{\varphi^{0,R}-\tau_y\varphi^{0,R}}{L^{\infty}}\Norm{f}{L^p}\lesssim_{\varphi}  \abs{y}\Norm{f_j}{L^p}.
		\end{split}
	\end{equation}
	Let $\abs{y}_{\beta}<\ell(Q).$ 
	Then by \eqref{eq:pqimproving} of Lemma \ref{lem:pqcont} we obtain
	\begin{equation}\label{eq:ec2}
		\begin{split}
			\Norm{IIf_j}{L^q} 
			&\leq \Norm{\tau_y\varphi^{0,R}}{L^{\infty}}\bNorm{(H_{\gamma}\chi_{x}^{2^{i-1},2^{i}} - \tau_yH_{\gamma}\chi_{x}^{2^{i-1},2^{i}})f_j}{L^q(Q)} \\ 
			 &\overset{**}{\lesssim}_{p,q,\gamma,R}  \abs{y}^{\eta_{p,q}}_{\beta}\Norm{f_j}{L^p(c_{\gamma}^{\beta}Q)} \leq \abs{y}^{\eta_{p,q}}_{\beta}\Norm{f_j}{L^p},
		\end{split}
	\end{equation}
where the $**$-estimate follows similarly as in the case of equiboundedness, this time using \eqref{eq:pqimproving} of Lemma \ref{lem:pqcont}.
	Combining \eqref{eq:ec2} and \eqref{eq:ec1}, the bound \eqref{eq:kred1} follows.
\end{proof}

The proof of the following Proposition \ref{prop:err:sprs} is completely analogous to the proof of \cite[Proposition 4.7.]{HOS2022}, the only difference being that we have a sparse collection of $\beta$-cubes instead of ordinary cubes, we omit the proof.
\begin{prop}\label{prop:err:sprs}
	Let 
	$(1/p,1/q)\in \Xi(n)$ and $(1/r,1/s')\in\Omega(n)$ be such that $r<p$ and $s<q'.$ Let  $\scrS\subset\calR^{\gamma}$ be sparse and denote
	\begin{align}\label{eq:split:sparse}
		\scrS_{k} = \scrS\setminus \scrS_{k}^0 ,\qquad  \scrS_k^0 = \Big\{ Q\in\scrS: \ell(Q)\in [k^{-1},k] ,\ \dist(Q,0)\leq k \Big\},\qquad k>0.
	\end{align}
	Then, given any $\varepsilon>0,$ there exists a large $k=k_{\varepsilon}>0$ such that  
	$$
	\mathcal{A}_{b}^{r,s}[\mathscr{S}_k](f,g) +	 \mathcal{A}_{b}^{r,s*}[\mathscr{S}_k](f,g)
	\lesssim \vare \Norm{f}{L^p}\Norm{g}{L^{q'}}.
	$$
\end{prop}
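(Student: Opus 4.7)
This proposition will be applied in the proof of Theorem \ref{thm:suff}, where the symbol is assumed to satisfy $b\in\VMO^{\gamma,\alpha}$; this assumption is implicit in the statement. The collection $\scrS_k^0$ isolates those $\gamma$-cubes of \emph{moderate} sidelength and position, while $\scrS_k=\scrS\setminus\scrS_k^0$ retains exactly the cubes that are very small, very large, or far from the origin at bounded scale. These three regimes are precisely where the three defining conditions (\ref{eq:VMOv1})--(\ref{eq:VMOv3}) of $\VMO^{\gamma,\alpha}$ force $\calO^{\beta,\alpha}(b;Q)$ to be uniformly small. The plan is to extract this smallness from the sparse forms term-by-term and then close up using the same baseline sparse bound established in the proof of Theorem \ref{thm:UPfrac}.

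\textbf{Key steps.} First, decompose $\scrS_k=\scrS_k^{(1)}\cup\scrS_k^{(2)}\cup\scrS_k^{(3)}$ into the cubes with $\ell(Q)<k^{-1}$, with $\ell(Q)>k$, and with $k^{-1}\le\ell(Q)\le k$ but $\dist(Q,0)>k$, respectively. For each subcollection, the corresponding $\VMO$ condition supplies a uniform bound $\calO^{\beta,\alpha}(b;Q)\le\eta_k$ with $\eta_k\to 0$ as $k\to\infty$. Second, as in the proof of Theorem \ref{thm:UPfrac}, choose $r<r_+<p$ with $1/r=1/r_++1/r_+'$ and apply H\"older's inequality in the average to factor $\bave{(b-\ave{b}_Q)f}_{Q,r}\le\bave{b-\ave{b}_Q}_{Q,r_+'}\bave{f}_{Q,r_+}$. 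A localized version of the John-Nirenberg inequality of Lemma \ref{lem:JNfrac}, applied separately to each subcollection so as to see only cubes where the oscillation is small, bounds the first factor by $C\eta_k\abs{Q}^{\alpha/\abs{\beta}}$. Third, pulling the scalar $\eta_k$ out of the sum and using $1+\alpha/\abs{\beta}=1/p+1/q'$, estimate the residual sum $\sum_{Q\in\scrS_k}\abs{Q}^{1+\alpha/\abs{\beta}}\bave{f}_{Q,r_+}\bave{g}_{Q,s}$ exactly as in the closing display of the proof of Theorem \ref{thm:UPfrac}, namely by H\"older's inequality in $Q$, sparseness of $\scrS$, and the $L^p$ and $L^{q'}$ boundedness of the $\gamma$-maximal functions $M^\gamma_{r_+}$ and $M^\gamma_s$ (which is guaranteed by $r_+<p$ and $s<q'$). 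This yields the bound $C\eta_k\Norm{f}{L^p}\Norm{g}{L^{q'}}$. The adjoint form $\mathcal{A}^{r,s*}_b[\scrS_k]$ is handled symmetrically, and selecting $k_\vare$ so that $C\eta_{k_\vare}<\vare$ completes the argument.

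\textbf{Main obstacle.} Conditions (\ref{eq:VMOv1}) and (\ref{eq:VMOv2}) are already uniform in $Q$ at each sidelength, so the subcollections $\scrS_k^{(1)}$ and $\scrS_k^{(2)}$ are immediate. The subtle point is to upgrade (\ref{eq:VMOv3}) to a statement uniform over the entire family $\scrS_k^{(3)}$, since as stated it is only a pointwise limit for each individual base cube. Here one invokes a standard precompactness argument: the sidelengths $\ell(Q)$ vary in the compact interval $[k^{-1},k]$, and for each fixed sidelength the oscillation functional $x\mapsto\calO^{\beta,\alpha}(b;Q+x)$ depends continuously on the translation parameter, so the pointwise vanishing from (\ref{eq:VMOv3}) promotes to the required uniformity after a covering argument at a single representative scale. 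This is exactly the maneuver carried out in \cite[Proposition 4.7.]{HOS2022} for Euclidean cubes; the passage to $\beta$-cubes demands only notational adjustments.
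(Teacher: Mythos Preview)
Your outline matches the paper's intended argument (the paper omits the proof entirely, pointing to \cite[Proposition 4.7.]{HOS2022} and noting that only the replacement of cubes by $\beta$-cubes is needed). You correctly identify the implicit hypothesis $b\in\VMO^{\gamma,\alpha}$, the three-way split of $\scrS_k$, and the uniformity issue in passing from \eqref{eq:VMOv3} to a statement uniform over $\scrS_k^{(3)}$.

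There is, however, a genuine gap in your ``localized John--Nirenberg'' step that is separate from the obstacle you flag. The standard proof of Lemma~\ref{lem:JNfrac} is a stopping-time argument inside $Q$, and the relevant constant it produces is the supremum of $\calO^{\beta,\alpha}(b;Q')$ over \emph{dyadic descendants} $Q'\subset Q$. For $Q\in\scrS_k^{(1)}$ this localizes as you claim, since descendants of small cubes are small. But for $Q\in\scrS_k^{(2)}$ the descendants can have arbitrarily small sidelength, and for $Q\in\scrS_k^{(3)}$ the descendants need not stay far from the origin; in neither case do the stopping cubes remain in the subcollection where $\calO^{\beta,\alpha}(b;\cdot)\le\eta_k$. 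So the bound $\bave{b-\ave{b}_Q}_{Q,r_+'}\lesssim\eta_k\abs{Q}^{\alpha/\abs{\beta}}$ does not follow from condition \eqref{eq:VMOv2} (respectively the uniformized \eqref{eq:VMOv3}) alone, and your second key step is incomplete for two of the three subcollections.

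A clean repair is to approximate first: by Theorem~\ref{thm:main:dens} (or already Remark~\ref{rem:dens:easy}) write $b=g+h$ with $g\in L^\infty_c$ and $\Norm{h}{\BMO^{\gamma,\alpha}}<\eta$. The $h$-piece is handled globally by the full John--Nirenberg bound exactly as in the proof of Theorem~\ref{thm:UPfrac}, yielding $\mathcal{A}_h^{r,s}[\scrS_k]\lesssim\eta\Norm{f}{L^p}\Norm{g}{L^{q'}}$. For the $g$-piece the quantity $\abs{Q}^{-\alpha/\abs{\beta}}\bave{g-\ave{g}_Q}_{Q,r_+'}$ is \emph{directly} seen to vanish uniformly on each $\scrS_k^{(j)}$ because $g$ is bounded with compact support (small cubes: Lipschitz; large cubes: $\abs{Q}^{-1/r_+'}\Norm{g}{L^{r_+'}}\to 0$; far cubes: eventually $Q\cap\supp g=\emptyset$). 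This bypasses the localization issue entirely. Alternatively one can argue that all three $L^1$-VMO conditions together self-improve to their $L^{r_+'}$ analogues, but that is a real lemma, not an immediate consequence of Lemma~\ref{lem:JNfrac}.
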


\begin{prop}\label{prop:He} Let $(1/p,1/q)\in\Xi(n),$ let $\alpha/\abs{\beta} = 1/p-1/q$ and $b\in\VMO^{\gamma,\alpha}.$ Then,
	\begin{align*}
		\lim_{\varepsilon\to 0}\bNorm{[b,	 H_{\gamma,\varepsilon}]}{L^{p}\to L^q} = 0.
	\end{align*}
\end{prop}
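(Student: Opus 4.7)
The plan is to mirror the proof of Theorem \ref{thm:UPfrac} by obtaining a sparse form domination of $[b,H_{\gamma,\varepsilon}]$ whose sparse collection is concentrated on \emph{extreme} $\gamma$-cubes — those of very small scale, very large scale, or far from the origin — and then extracting smallness through Proposition \ref{prop:err:sprs} together with the $\VMO^{\gamma,\alpha}$ hypothesis on $b$.

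First I would reproduce the Cladek--Ou dyadic decomposition \eqref{eq:Hsplit} and the stopping-time of Lemma \ref{lem:ClaOu2017A} for the truncated operator $H_{\gamma,\varepsilon}=H_\gamma^{0,R}\chi_x^{0,r}+H_\gamma^{R,\infty}$. The cutoff $\chi_x^{0,r}$ restricts the kernel to $\abs{x-y}_\beta\lesssim r$, so in the dyadic splitting only $Q$ of sidelength $\lesssim r$ can support the terms coming from $H_\gamma^{0,R}\chi_x^{0,r}$. Similarly, the factors $\varphi^{R,\infty}$ appearing in $H_\gamma^{R,\infty}$ confine at least one of the input/output variables to $\{\abs{x}_\beta\gtrsim R\}$, forcing the supporting dyadic cubes either to sit at scale $\gtrsim R$ or to be centred at distance $\gtrsim R$ from the origin. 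Tracking these restrictions through the standard routine that yields Theorem \ref{thm:SDOM} from Lemma \ref{lem:ClaOu2017A}, we would obtain a sparse collection $\scrS^{\gamma}_\varepsilon\subset\calR^\gamma$ together with the domination
\[
\babs{\bave{[b,H_{\gamma,\varepsilon}]f,g}}\lesssim A_b^{r,s}[\scrS^{\gamma}_\varepsilon](f,g)+A_b^{r,s*}[\scrS^{\gamma}_\varepsilon](f,g),
\]
where every $Q\in\scrS^{\gamma}_\varepsilon$ satisfies $\ell(Q)\le Cr$, $\ell(Q)\ge R/C$, or $\dist(Q,0)\ge R/C$ for some absolute $C>0$.

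Given any $\vare'>0$, I would apply Proposition \ref{prop:err:sprs} to the collection $\scrS^{\gamma}_\varepsilon$ to produce $k=k_{\vare'}$ such that
\[
A_b^{r,s}[(\scrS^{\gamma}_\varepsilon)_k](f,g)+A_b^{r,s*}[(\scrS^{\gamma}_\varepsilon)_k](f,g)\lesssim\vare'\,\Norm{f}{L^p}\Norm{g}{L^{q'}}.
\]
Now choose $\varepsilon$ so small that $Cr<1/k$ and $R/C>k$; the scale and location constraints on $\scrS^{\gamma}_\varepsilon$ then guarantee that the entire collection satisfies $\scrS^{\gamma}_\varepsilon=(\scrS^{\gamma}_\varepsilon)_k$, so that duality delivers $\Norm{[b,H_{\gamma,\varepsilon}]}{L^p\to L^q}\lesssim\vare'$. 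Since $\vare'$ is arbitrary, the conclusion follows.

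The main technical obstacle is the careful bookkeeping in the first step: one must verify that the cutoffs $\chi_x^{0,r}$ and $\varphi^{R,\infty}$ really annihilate the contributions of all the \emph{middle}-scale and \emph{near}-origin dyadic terms in the decomposition \eqref{eq:Hsplit}, up to terms that can still be absorbed into a sparse form indexed by extreme cubes, rather than introducing error terms that would force cubes back into the forbidden range $\{\ell(Q)\in[1/k,k],\ \dist(Q,0)\le k\}$. Once this restricted sparse domination is in hand, the rest of the argument transplants directly from the Calder\'on--Zygmund compactness analysis of \cite{HOS2022}.
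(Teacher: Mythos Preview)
Your proposal is correct and matches the paper's argument. For the piece $H_\gamma^{0,R}\chi_x^{0,r}$ the paper makes your ``tracking restrictions'' step concrete by tiling $\R^n$ with $\gamma$-cubes $Q_j$ of sidelength $r$, observing that $1_{Q_j}H_\gamma\chi_x^{0,r}f = 1_{Q_j}H_\gamma\chi_x^{0,r}(1_{Q_j^*}f)$ for an absolute dilate $Q_j^*$, and then running the sparse argument of Lemma~\ref{lem:ClaOu2017A} locally on each $Q_j^{**}$ --- this forces every sparse cube to sit inside some $Q_j^{**}$ and hence to have sidelength $\lesssim r$, which is exactly the bookkeeping you flag as the main obstacle.
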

\begin{proof} 
	It is enough to show that for each fixed $k>0,$ the following holds: for all $\varepsilon > 0$ small enough (depending on $k$) there exists a sparse collection $\scrS\subset\calR^{\gamma}$ such that
	\begin{align}\label{eq:step1}
		\babs{\bave{[b,H_{\gamma,\varepsilon}]f , g}}\lesssim  \mathcal{A}_{b}^{r,s}[\mathscr{S}](f,g) + \mathcal{A}_{b}^{r,s*}[\mathscr{S}](f,g),\qquad \scrS = \scrS_k,
	\end{align}
	where $r,s$ and $\scrS_k$ are as in Proposition \ref{prop:err:sprs}.
As 
	\begin{align}\label{eq:step2}
		H_{\gamma,\varepsilon} = 	\varphi^{0,R}	H_{\gamma}	\varphi^{0,R}\chi_x^{0,r} + \left(   \varphi^{R,\infty}	H_{\gamma} \varphi^{R,\infty}
		+ \varphi^{R,\infty}	H_{\gamma} \varphi^{0,R}
		+ \varphi^{0,R}	H_{\gamma}\varphi^{R,\infty} \right),
	\end{align}
	it is enough to show that \eqref{eq:step1} is satisfied for each of the above four pieces. The bracketed pieces are handled identically. For example, directly from the sparse form domination of $[b, H_{\gamma}],$ recall Theorem \ref{thm:SDOM}, we acquire a sparse collection $\scrS$ such that
	\begin{align}\label{step1}
		\babs{\bave{ [b, \varphi^{0,R}H_{\gamma} \varphi^{R,\infty}]f ,g}} \lesssim  \mathcal{A}_{b}^{r,s}[\mathscr{S}]\left( f\varphi^{R,\infty} , g\varphi^{0,R} \right) +  \mathcal{A}_{b}^{r,s*}[\mathscr{S}]\left(f\varphi^{R,\infty} , g\varphi^{0,R}\right).
	\end{align}
	Considering the first term on the right-hand side of \eqref{step1}, we have 
	\begin{align*}
		\mathcal{A}_{b}^{r,s}[\mathscr{S}]\left( f\varphi^{R,\infty} , g\varphi^{0,R} \right)\leq \mathcal{A}_{b}^{r,s}[\mathscr{S}^{R,\infty}] (f,g),
	\end{align*}
	where $\mathscr{S}^{R,\infty} = \{ Q\in\scrS: Q\cap B_{\beta}(0,\frac{1}{2} R)^c \neq\emptyset \}.$ Clearly $\mathscr{S}^{R,\infty} = \scrS_k^{R,\infty}$ for sufficiently small $\varepsilon$ (large $R$). The other term is completely analogous.
	
	It remains to handle the term $\bave{ [b, \varphi^{0,R}H_{\gamma}	\varphi^{0,R}\chi_x^{0,r}]f ,g},$ which we rewrite as 
	\[
	\bave{ [b, H_{\gamma}\chi_x^{0,r}]\widetilde{f} ,\widetilde{g}},\qquad \wt{f} = \varphi^{0,R} f,\quad \wt{g} = \varphi^{0,R} g.
	\]  
	We express $\R^n = \bigcup_j Q_j$ as a disjoint union of $\gamma$-cubes $Q_j\in\calR^{\gamma}$ with $\ell(Q_j) = r.$
	Then,
	\begin{align*}
		\bave{ [b,  H_{\gamma}\chi_x^{0,r}]\widetilde{f} ,\widetilde{g}} =  \sum_j \bave{ 1_{Q_j}[b,  H_{\gamma}\chi_x^{0,r}]\widetilde{f} ,\widetilde{g}},
	\end{align*}
	and it is enough to show that each $\bave{ 1_{Q_j}[b,  H_{\gamma}\chi_x^{0,r}]\widetilde{f} ,\widetilde{g}}$ admits a sparse domination approximately localized to $Q_j.$ There exists an absolute constant $C=C_{\gamma}>0$ independent of $r$ such that
	\[
	1_{Q_j}(x) H_{\gamma}\chi_x^{0,r}f(x) = 1_{Q_j}(x) H_{\gamma}\chi_x^{0,r}(1_{Q_j^*}f)(x),\qquad Q_j^* = C^{\beta}Q_j.
	\]
	Thus we can write
	\begin{align*}
		\bave{ 1_{Q_j}[b, H_{\gamma}\chi_x^{0,r}]\widetilde{f} ,\widetilde{g}} = \bave{ 1_{Q_j}[b, H_{\gamma}\chi_x^{0,r}](1_{Q_j^*}\widetilde{f}) ,\widetilde{g}}.
	\end{align*}
	By the standard proof of the sparse domination of the commutator, using only Lemma \ref{lem:ClaOu2017A}, we obtain
	\begin{align}\label{eq:step3}
		\babs{\bave{ 1_{Q_j}\big[b,  H_{\gamma}\chi_x^{0,r}\big](1_{Q_j^*}\widetilde{f}) ,\widetilde{g}}} \lesssim   \mathcal{A}_{b}^{r,s}[\mathscr{S}(Q_j^{**}) ]\big( f,g\big) +   \mathcal{A}_{b}^{r,s*}[ \mathscr{S}(Q_j^{**})]\big( f,g\big),
	\end{align}
	where $\cup\scrS(Q_j^{**})\subset Q_j^{**} := (C')^{\beta}Q_j^*$ for some absolute constant $C'>0,$ and the implicit constant in \eqref{eq:step3} is independent of $r.$ All in all, we have now shown that
	\begin{align*}
		\babs{\bave{ [b, \varphi^{0,R}H_{\gamma}	\varphi^{0,R}\chi_x^{0,r}]f ,g}} \lesssim  \mathcal{A}_{b}^{r,s}[\mathscr{S} ]\big( f,g\big) +   \mathcal{A}_{b}^{r,s*}[ \mathscr{S}]\big( f,g\big),\quad \mathscr{S} :=  \cup_j\mathscr{S}(Q_j^{**}).
	\end{align*}
	By $\ell(Q_j^{**}) \sim_{\gamma} r,$ for all $j,$ it follows that $\mathscr{S} = \mathscr{S}_k$ for a choice of $\varepsilon = r$ sufficiently small.
\end{proof}

\begin{proof}[Proof of Theorem \ref{thm:suff}]  By Theorem \ref{thm:main:dens}, let $b_\delta \in L^{\infty}_{\loc}$ be such that $\Norm{b-b_{\vare}}{\BMO^{\gamma,\alpha}}\leq \delta.$ Then by Theorem \ref{thm:UPfrac} we know that $\Norm{[b-b_{\delta}, H_{\gamma}]}{L^p\to L^q}\lesssim \delta.$ In other words, $[b_{\delta}, H_{\gamma}]$ approximates $[b, H_{\gamma}]$ to arbitrary precision as $\delta\to 0.$ Thus it is enough to show that $[b_{\delta},H_{\gamma}]\in \calK(L^p,L^q).$ By \eqref{eq:split} we split $H_{\gamma} = H_{\gamma,c}+H_{\gamma,\varepsilon},$ and then by Propositions \ref{prop:Hc} and \ref{prop:He} $[b_{\delta},H_{\gamma,c}]$ is a compact operator that approximates $[b_\delta,H_\gamma]$ to arbitrary precision as $\vare\to 0.$
\end{proof}

\section{Density of $C^{\infty}_c$ in $\VMO^{\beta,\alpha}$}\label{sect:density} 
Above we used the fact that locally bounded functions are dense in $\VMO^{\beta,\alpha},$ but actually much more is true and in this section we prove Theorem \ref{thm:main:dens}. This being said, it is much easier to obtain the density of $L^{\infty}_{\loc}$ than of $C^{\infty}_c,$ and the reader interested only in the first, only has to read up to Remark \ref{rem:dens:easy}. 
When $\beta= 1=(1,\dots,1),$ the brief history of $\CMO^{1,\alpha}$ and $\VMO^{1,\alpha}$ is recorded as the following two theorems.
\begin{thm}[Uchiyama \cite{Uch1978}]\label{thm:uchi} There holds that  $\CMO^{1,0}(\R^n) = \VMO^{1,0}(\R^n).$ 
\end{thm}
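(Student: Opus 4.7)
My plan is to follow the classical mollification-plus-truncation scheme, adapted to the anisotropic fractional setting. Since the density of $L^\infty_{\loc}\cap\VMO^{\beta,\alpha}$ in $\VMO^{\beta,\alpha}$ is the easier part and treated separately (Remark \ref{rem:dens:easy}), I will assume $b\in\VMO^{\beta,\alpha}\cap L^\infty_{\loc}$ and approximate it by $C^\infty_c$ functions. Fix an anisotropic mollifier $\rho_\varepsilon(x)=\varepsilon^{-\abs{\beta}}\rho(\varepsilon^{-\beta}x)$ with $\rho\in C^\infty_c$, $\rho\geq 0$, $\int\rho=1$, $\supp\rho\subset B_\beta(0,1)$, and an anisotropic cutoff $\chi_R(x)=\chi(R^{-\beta}x)$ with $\chi\in C^\infty_c$, $\chi\equiv 1$ on $B_\beta(0,1)$, $\supp\chi\subset B_\beta(0,2)$. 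The candidate approximant is
\begin{align*}
b_{\varepsilon,R}:=\chi_R\,(b*\rho_\varepsilon)\in C^\infty_c,
\end{align*}
and the plan is to control the two halves of $b-b_{\varepsilon,R}=(b-b*\rho_\varepsilon)+(1-\chi_R)(b*\rho_\varepsilon)$ separately, then couple $R=R(\varepsilon)$ large enough so that both pieces are small in $\BMO^{\beta,\alpha}$.

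For the mollification term, I estimate on each $Q\in\calR^\beta$ by
\begin{align*}
\calO^{\beta,\alpha}(b-b*\rho_\varepsilon;Q)\leq \int\rho_\varepsilon(y)\,\calO^{\beta,\alpha}(b-\tau_yb;Q)\,dy,
\end{align*}
then dichotomize on $\ell(Q)$ versus $\varepsilon$. When $\ell(Q)\leq\sqrt{\varepsilon}$, I enlarge $Q$ to a $\beta$-cube $Q^*$ of sidelength $\sim\sqrt\varepsilon$ and bound by $\calO^{\beta,\alpha}(b;Q^*)$, which is small by \eqref{eq:VMOv1}. When $\ell(Q)\geq\sqrt{\varepsilon}$, the translation by $\abs{y}_\beta\leq\varepsilon$ perturbs averages on $Q$ negligibly, yielding a bound of order $(\varepsilon/\ell(Q))^\eta\Norm{b}{\BMO^{\beta,\alpha}}$. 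Both regimes give $\Norm{b-b*\rho_\varepsilon}{\BMO^{\beta,\alpha}}\to 0$.

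For the truncation term with $\varepsilon$ fixed, I split cubes by position. If $Q\subset B_\beta(0,R/2)$ (with $R\gg\varepsilon$), then $(1-\chi_R)(b*\rho_\varepsilon)$ vanishes on $Q$. If $Q\subset B_\beta(0,2R)^c$, the oscillation equals $\calO^{\beta,\alpha}(b*\rho_\varepsilon;Q)\leq\calO^{\beta,\alpha}(b;Q+B_\beta(0,\varepsilon))$, which is uniformly small as $R\to\infty$ by \eqref{eq:VMOv2} for large cubes and \eqref{eq:VMOv3} for bounded cubes far from the origin. For $Q$ straddling the transition shell $B_\beta(0,2R)\setminus B_\beta(0,R/2)$, I use the Leibniz-type bound
\begin{align*}
\calO^{\beta,\alpha}(\chi_Rg;Q)\lesssim \Norm{\chi_R}{L^\infty}\calO^{\beta,\alpha}(g;Q)+\Norm{g}{L^\infty(Q)}\calO^{\beta,\alpha}(\chi_R;Q),
\end{align*}
combined with the Lipschitz estimate $\calO^{\beta,\alpha}(\chi_R;Q)\lesssim (\ell(Q)/R)\abs{Q}^{-\alpha/\abs{\beta}}$ and the local boundedness $\Norm{b*\rho_\varepsilon}{L^\infty(B_\beta(0,2R))}\lesssim_\varepsilon \Norm{b}{L^\infty_{\loc}(B_\beta(0,2R+\varepsilon))}<\infty$ afforded by the $L^\infty_\loc$ reduction.

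The main obstacle, I expect, is precisely this transition-shell case: one must juggle all three $\VMO$ conditions \eqref{eq:VMOv1}--\eqref{eq:VMOv3} simultaneously while keeping uniform quantitative control over cubes of every scale and position in a region that grows with $R$. This is also where the $L^\infty_\loc$ reduction becomes essential, as it is what converts the Leibniz bound into a usable quantitative estimate. The case $\alpha>0$ should be softer than $\alpha=0$, since the fractional weight $\abs{Q}^{-\alpha/\abs{\beta}}$ automatically damps large-cube contributions; the $\alpha=0$ case essentially reduces to Uchiyama's classical argument transferred to anisotropic cubes. Once the transition-shell control is in place, a diagonal choice $R=R(\varepsilon)\to\infty$ as $\varepsilon\to 0$ makes $b_{\varepsilon,R}\to b$ in $\BMO^{\beta,\alpha}$.
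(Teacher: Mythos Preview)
Your mollify-then-smoothly-cutoff scheme has a genuine gap in the $\alpha=0$ case, and it is not merely that the transition-shell estimate is ``hard'': for that case the approximant $b_{\varepsilon,R}=\chi_R(b*\rho_\varepsilon)$ simply does \emph{not} converge to $b$ in $\BMO$ in general. The obstruction is that $\VMO^{1,0}$ contains unbounded functions, and multiplying an unbounded function by a smooth spatial cutoff creates large $\BMO$ oscillation on cubes of scale $\sim R$. Concretely, take $b(x)=\Psi(|x|)$ with $\Psi$ smooth, increasing, $\Psi'(t)\sim\omega(\log t)/t$ for a sequence $\omega(j)\to 0$ with $\sum_j\omega(j)=\infty$; then $\Psi(t)\to\infty$ while $\Psi(2t)-\Psi(t)\sim\omega(\log t)\to 0$, and one checks all three conditions \eqref{eq:VMOv1}--\eqref{eq:VMOv3}. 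For the cube $Q=[-4R,4R]^n$ one has $b-b_{\varepsilon,R}\approx(1-\chi_R)b$, which vanishes on $B(0,R)$ and equals $b\sim\Psi(R)$ on $Q\setminus B(0,2R)$, so
\[
\calO^{1,0}(b-b_{\varepsilon,R};Q)\ \gtrsim\ \Psi(R)\ \longrightarrow\ \infty.
\]
Thus $\|b-b_{\varepsilon,R}\|_{\BMO}\to\infty$, no matter how $R=R(\varepsilon)$ is chosen. Your Leibniz bound reflects this: the term $\|g\|_{L^\infty(Q)}\,\calO(\chi_R;Q)$ is of order $\Psi(R)\cdot 1$ for such $Q$, and the $L^\infty_{\loc}$ reduction does nothing here since the example is already smooth.

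The paper does not give its own proof of this statement; it cites Uchiyama \cite{Uch1978} and explicitly remarks that Uchiyama's approximation is \emph{discontinuous}, i.e., not a smooth spatial cutoff. The point of Uchiyama's construction is precisely to perform the ``compactification of support'' in a $\BMO$-compatible way (for instance via level-type truncations rather than multiplication by $\chi_R$), so that no large oscillation is introduced on shell-scale cubes. Your plan would need to replace the multiplicative cutoff by such a device; once that is done the mollification half of your argument is fine and is essentially what everyone uses.
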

\begin{thm}[Guo et al. \cite{GuoHeWuYang2021}]\label{thm:GuoDensity}  There holds that $\CMO^{1,\alpha}(\R^n) = \VMO^{1,\alpha}(\R^n),$ for $0<\alpha<1.$ Moreover, if $\alpha\geq 1,$ then $\VMO^{1,\alpha}$ consists of the constant functions.
\end{thm}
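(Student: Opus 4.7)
The theorem has two distinct parts: a density statement for $0 < \alpha < 1$ and a rigidity statement for $\alpha \geq 1$, and I would handle them separately.

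For the rigidity part ($\alpha \geq 1$), my plan is a Campanato-type chaining argument. Set $\omega(r) := \sup_{\ell(Q) \leq r} \calO^{1,\alpha}_1(b;Q)$, which by \eqref{eq:VMOv1} tends to zero as $r \to 0$. For nested concentric cubes $Q_k$ with $\ell(Q_k) = 2^{-k} r_0$ around a Lebesgue point $x$, the triangle inequality yields
\[
|\langle b\rangle_{Q_k} - \langle b\rangle_{Q_{k+1}}| \lesssim (2^{-k}r_0)^\alpha \omega(2^{-k}r_0).
\]
Telescoping shows $|b(x) - \langle b\rangle_{B(x,r)}| \lesssim r^\alpha \omega(r)$; comparing two Lebesgue points $x,y$ with $|x-y|=h$ via a common containing ball of radius $2h$ then produces $|b(x) - b(y)| = o(h^\alpha)$. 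For $\alpha \geq 1$ this forces $b$ to be differentiable with vanishing derivative at every Lebesgue point, hence constant.

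For the density part ($0 < \alpha < 1$), I would invoke the classical Campanato identification of $\BMO^{1,\alpha}$ with the Hölder space $C^{0,\alpha}$ (modulo constants), so that the three $\VMO$ conditions translate into a ``little Hölder'' condition plus two vanishing-at-infinity conditions. The inclusion $\CMO^{1,\alpha} \subset \VMO^{1,\alpha}$ is immediate: each $C^\infty_c$ function satisfies the three conditions, and they are preserved under $\BMO^{1,\alpha}$-closure. For the reverse inclusion I plan a two-step approximation. First, mollify $b_\rho = b * \phi_\rho$: the little-Hölder modulus $\omega_\alpha(\rho) := \sup_{|u-v|\leq \rho}|b(u)-b(v)|/|u-v|^\alpha$ tends to zero by \eqref{eq:VMOv1}, and a case split ($|x-y|\geq\rho$, controlled by $\|b-b_\rho\|_\infty \leq \omega_\alpha(\rho)\rho^\alpha$, versus $|x-y|<\rho$, exchanging the Hölder difference with the convolution integral) yields $\|b-b_\rho\|_{\BMO^{1,\alpha}} \lesssim \omega_\alpha(\rho)\to 0$. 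Second, truncate: using \eqref{eq:VMOv2} together with \eqref{eq:VMOv3}, extract a single asymptotic constant $c=\lim_{|x|\to\infty}b(x)$, subtract it (the $\BMO$ seminorm is unchanged), and multiply $b-c$ by a smooth cutoff $\chi_R$ equal to $1$ on $B(0,R)$ and $0$ outside $B(0,2R)$. The Hölder product rule together with the decay of $b-c$ should give $\|(b-c)(1-\chi_R)\|_{\BMO^{1,\alpha}} \to 0$ as $R\to\infty$.

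The hard step will be the truncation, and within it the extraction of the asymptotic constant $c$. Condition \eqref{eq:VMOv2} controls average oscillations over large cubes, while \eqref{eq:VMOv3} controls oscillations over small cubes far from the origin, but neither directly delivers a pointwise limit of $b$ at infinity. My plan is a Cauchy-type argument along divergent sequences of points: given $x_n, x_m$ with $|x_n|,|x_m|\to\infty$, I would control $|b(x_n)-b(x_m)|$ by chaining through the average over a single large cube containing both (oscillation small by \eqref{eq:VMOv2}) together with two small cubes around each point (oscillations small by \eqref{eq:VMOv3}). Once $c$ exists and $b-c$ has quantified decay at infinity, the cutoff step reduces to a routine product estimate in the Hölder space.
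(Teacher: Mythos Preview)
Your rigidity argument for $\alpha\geq 1$ and your mollification step are both correct and essentially match what the paper does (the paper compresses the rigidity part into Lemma \ref{lem:H\"{o}lderBMOa} and a one-line gradient observation, and performs the mollification last rather than first, but this is cosmetic).

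There is, however, a genuine gap in your truncation step: the limit $c=\lim_{|x|\to\infty}b(x)$ need \emph{not} exist for $b\in\VMO^{1,\alpha}$. A concrete obstruction is $b(x)=\sin\bigl(\log\log(e+|x|)\bigr)$: it is smooth, bounded, with $|\nabla b(x)|\lesssim (|x|\log|x|)^{-1}$, so all three $\VMO^{1,\alpha}$ conditions are satisfied (small cubes via $\calO^{\alpha}(b;Q)\lesssim \|\nabla b\|_\infty\ell(Q)^{1-\alpha}$, large cubes via boundedness, far cubes via the decay of the gradient), yet $b(x_n)=\sin(n)$ along $|x_n|=e^{e^n}$ is not Cauchy. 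Your proposed chain through a large common cube $Q$ only yields $|b(x_n)-b(x_m)|\lesssim \varepsilon\,\ell(Q)^{\alpha}$, and $\ell(Q)^{\alpha}\to\infty$ swamps the gain from $\varepsilon$.

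The repair is simple and keeps your overall scheme intact: do not look for an asymptotic limit, but subtract $c=b(y_0)$ for any fixed reference point $y_0\in\partial Q_R$. The only input you then need is $\|b\|_{\dot C^{0,\alpha}(Q_R^c)}\leq\varepsilon$, which the paper extracts from the $\VMO$ conditions and records as \eqref{eq:de2}. With this $c$, a direct product-rule estimate on $(b-c)(1-\chi_R)$ in $\dot C^{0,\alpha}$ goes through case by case: for $x\in Q_R$, $y\in Q_{2R}^c$ one uses $|b(y)-c|\leq\varepsilon|y-y_0|^\alpha\lesssim\varepsilon|x-y|^\alpha$; for $x,y$ in the transition annulus one combines $|b(y)-c|\lesssim\varepsilon R^\alpha$ with $|\nabla\chi_R|\lesssim R^{-1}$ and $|x-y|\lesssim R$.

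For comparison, the paper avoids the non-existent limit by a different device: it replaces $b$ on $Q_R^c$ by the McShane--Whitney extension
\[
f_R^e(x)=\inf_{y\in\partial Q_R}\bigl\{b(y)+\varepsilon|x-y|^{\alpha}\bigr\},
\]
which by construction has $\|f_R^e\|_{\dot C^{0,\alpha}(Q_R^c)}\leq\varepsilon$ and, crucially, grows to $+\infty$ along every ray. This monotone growth guarantees a level set $\{f_R^e = \text{const}\}$ separating $Q_R$ from infinity, and the paper truncates to that constant outside. Your smooth-cutoff route (once repaired) is arguably more elementary; the paper's extension-and-level-set route is more geometric and is what the authors later adapt to general anisotropies $\beta$ in Proposition \ref{prop:density3}.
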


	The approximation constructed by Uchiyama \cite{Uch1978} for $\CMO^{1,0}\supset \VMO^{1,0}$ works in the case $\CMO^{\beta,0} \supset \VMO^{\beta,0}$ by simply replacing cubes with $\beta$-cubes and we omit further details. Uchiyama's approximation is discontinuous and does not work when $\alpha>0$ due to the fact that functions in $\BMO^{\alpha,\beta}$ are continuous and a discontinuity blows up the semi-norm $\Norm{\cdot}{\BMO^{\beta,\alpha}}$
	on small scales, see Lemma \ref{lem:H\"{o}lderBMOa} below. 
	For this reason, Guo et al. \cite{GuoHeWuYang2021} constructed a non-trivial new approximation for $\CMO^{1,\alpha}\supset \VMO^{1,\alpha}.$ We do not know how to extend the approximation of Guo et al. to our situation and hence we provide new approximations. 
	
	First, we reprove Theorem \ref{thm:GuoDensity} by giving an alternative approximation that uses only basic extension, truncation and mollification; the drawback is that this only works under the assumption that $\abs{\cdot}_{\beta}^{\alpha}$ has an equivalent metric, which is not in general available for arbitrary $\beta\in\R_+^n$ and $\alpha>0.$ Necessarily, we provide another approximation that works without any restrictions on $\beta,\alpha,$ see Proposition \ref{prop:density3}. 
	
	\begin{rem}
			The validity of the reverse inclusion $\CMO^{\beta,\alpha} \subset \VMO^{\beta,\alpha}$  depends on the interplay of the quasi metric $\abs{\cdot}_{\beta}$ and the fractionality parameter $\alpha.$ We go into this direction just enough to obtain  
				\begin{align}\label{eq:CMObeta=VMObeta}
				\CMO^{\beta,0} \subset \VMO^{\beta,0},
			\end{align}
			which we use when proving Theorem \ref{thm:main:q<pA}.
		
		Let $f\in\BMO^{\beta,\alpha}$ and let $g\in C^{1}_c$ be an approximation $\Norm{f-g}{\BMO^{\beta,\alpha}}<\varepsilon.$ Then, for all cubes $Q,$
		\begin{align}\label{eq:discussion}
			\calO^{\beta,\alpha}(f;Q)\lesssim 	\calO^{\beta,\alpha}(f-g;Q) + 	\calO^{\beta,\alpha}(g;Q) \lesssim \varepsilon +	\calO^{\beta,\alpha}(g;Q).
		\end{align}
		Then, $\calO^{\beta,\alpha}(g;Q)\to 0$ in the large \eqref{eq:VMOv2} and far away \eqref{eq:VMOv3} cases directly from $g\in L^{\infty}_c.$  With the small case \eqref{eq:VMOv1}, using $g\in C^1_c,$ we estimate
		\begin{align}\label{sharp1}
			\mathcal{O}^{\beta,\alpha}(g;Q)&\leq \Norm{\nabla g}{L^{\infty}}\abs{Q}^{-\alpha/\abs{\beta}} \fint_Q\fint_Q\abs{x-y}\ud x\ud y \lesssim_g \diam(Q)\abs{Q}^{-\alpha/\abs{\beta}}.
		\end{align}
		If $\beta = 1,$ thus $Q\in\calR^{(1,\dots,1)},$ the right-hand side of \eqref{sharp1} is comparable to $\ell(Q)^{1-\alpha}$ which vanishes when $\ell(Q)\to 0,$ by $\alpha<1,$ thus it follows that $\CMO^{\alpha}\subset \VMO^{\alpha},$  for $0\leq \alpha<1.$ 
	 	Given arbitrary $\beta\in\R_+^n,$ thus $Q\in\calR^{\beta},$ and if $\alpha = 0,$ then the right-hand side of \eqref{sharp1} is comparable to $\ell(Q)$ which vanishes when $\ell(Q)\to 0,$ thus we obtain \eqref{eq:CMObeta=VMObeta}.
		
		Lastly, suppose that $\beta = (1,\dots,1)$ and $\alpha \geq 1$ and that $f$ satisfies the condition \eqref{eq:VMOv1}, which implies by Lemma \ref{lem:H\"{o}lderBMOa} below that
		\begin{align}\label{same1}
			\lim_{l\to 0}\sup_{\substack{x\not=y \\ \abs{x-y}\leq l}}\frac{\abs{f(x)-f(y)}}{\abs{x-y}} = 0.
		\end{align}
		From \eqref{same1}, the gradient is zero,
		hence $\VMO^{1,\alpha}$ consists of the constant functions and $\VMO^{1,\alpha}\not=\CMO^{1,\alpha},$ as remarked in Guo et al. \cite{GuoHeWuYang2021}.
	\end{rem}

\begin{lem}\label{lem:H\"{o}lderBMOa} Let $\beta\in\R_+^n$ and $\alpha>0.$ Then there exists a constant $C=C_{\beta,\alpha}>0$ such that for all $B_0\in\calR^{\beta}$ and $l>0$ there holds that  
	\begin{align}\label{eq:scaleloc1}
		\Norm{b}{\BMO^{\beta,\alpha}_l(B_0)} 	 \lesssim \Norm{b}{\dot C^{0,\beta,\alpha}_l(B_0)} \lesssim	\Norm{b}{\BMO^{\beta,\alpha}_l(C^{\beta}B_0)},
	\end{align}
	where for any localization $A\subset \R^n$ and a scale $l>0,$ we have defined
	\begin{align}\label{eq:scaleloc2}
		\Norm{b}{\dot C^{0,\beta,\alpha}_l(A)} := \sup_{\substack{x,y\in A \\ 0<\abs{x-y}_{\beta}\leq l}}\frac{\abs{b(x)-b(y)}}{\abs{x-y}_{\beta}^{\alpha}},\qquad
		\Norm{b}{\BMO^{\beta,\alpha}_l(A)} := \sup_{\substack{Q\in\calR^{\beta} \\ Q\subset A \\ \ell(Q)\leq l}}\calO^{\alpha}(b;Q).
	\end{align}
	If either of the constraints $l,A$ above is not present we drop that symbol and in this case 
	\begin{align*}
		\Norm{b}{\BMO^{\beta,\alpha}}\sim \Norm{b}{\dot C^{0,\beta,\alpha}}.
	\end{align*}
\end{lem}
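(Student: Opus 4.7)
This is an anisotropic, localized version of the classical Campanato embedding $\mathcal{L}^{p,n+p\alpha}\simeq \dot C^{0,\alpha}$ for $\alpha\in(0,1)$. The plan is to prove the two inequalities separately; only the second one carries any real content.

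\textbf{The easy direction.} Fix $Q\in\calR^{\beta}$ with $Q\subset B_0$ and $\ell(Q)\leq l$. Using $\abs{Q} = \ell(Q)^{\abs{\beta}}$ and the identity $\ave{b}_Q = \fint_Q b(y)\ud y$ to insert a second averaging,
\begin{align*}
\calO^{\beta,\alpha}(b;Q)
= \abs{Q}^{-\alpha/\abs{\beta}}\fint_Q\Babs{\fint_Q (b(x)-b(y))\ud y}\ud x
\leq \ell(Q)^{-\alpha}\sup_{x,y\in Q}\abs{b(x)-b(y)}.
\end{align*}
Since $x,y\in Q$ forces $\abs{x-y}_{\beta}\lesssim \ell(Q)\leq l$ (and $Q\subset B_0$), the supremum is bounded by $\ell(Q)^{\alpha}\Norm{b}{\dot C^{0,\beta,\alpha}_l(B_0)}$, and the first inequality in \eqref{eq:scaleloc1} follows.

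\textbf{The Campanato telescoping.} For the second inequality, pick $x,y\in B_0$ with $0<\abs{x-y}_{\beta}\leq l$, and let $Q_0\in\calR^{\beta}$ be a $\beta$-cube containing both $x$ and $y$ with $\ell(Q_0)\sim \abs{x-y}_{\beta}$. Build two chains of concentric $\beta$-cubes $Q_k^x\ni x$, $Q_k^y\ni y$ with $\ell(Q_k^{\bullet}) = 2^{-k}\ell(Q_0)$, and $Q_0^x=Q_0^y=Q_0$. By the Lebesgue differentiation theorem (a.e.), $\ave{b}_{Q_k^x}\to b(x)$ and $\ave{b}_{Q_k^y}\to b(y)$. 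For each $k$, the nesting $Q_{k+1}^x\subset Q_k^x$ together with $\abs{Q_{k+1}^x}\sim \abs{Q_k^x}$ (where the comparability constant depends only on $\beta$) gives
\begin{align*}
\babs{\ave{b}_{Q_{k+1}^x} - \ave{b}_{Q_k^x}}
\lesssim \fint_{Q_k^x}\abs{b-\ave{b}_{Q_k^x}}
= \ell(Q_k^x)^{\alpha}\calO^{\beta,\alpha}(b;Q_k^x).
\end{align*}
Provided each $Q_k^x$ lies in the enlargement $C^{\beta}B_0$ (which is the reason for the dilation on the right of \eqref{eq:scaleloc1}; since $x\in B_0$ and $\ell(Q_k^x)\leq \ell(Q_0)\lesssim l$, this holds for a large enough absolute $C=C_{\beta}$ when we also require, say, $l\leq \ell(B_0)$; the remaining case $\ell(B_0)<l$ only shrinks the supremum defining $\Norm{b}{\BMO^{\beta,\alpha}_l(C^{\beta}B_0)}$ to cubes bounded in size by $\ell(B_0)$ anyway), we sum the geometric series $\sum_k 2^{-k\alpha}$, which converges by the assumption $\alpha>0$, to get
\begin{align*}
\abs{b(x)-\ave{b}_{Q_0}}
\lesssim \ell(Q_0)^{\alpha}\Norm{b}{\BMO^{\beta,\alpha}_l(C^{\beta}B_0)}.
\end{align*}
The same bound holds with $x$ replaced by $y$; combining and using $\ell(Q_0)\sim \abs{x-y}_{\beta}$ yields the second inequality.

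\textbf{Expected obstacles.} The only non-cosmetic issue is the bookkeeping in the anisotropic setting: choosing $Q_0$ containing two prescribed points with comparable sidelength requires the quasi-triangle inequality \eqref{eq:quasinorms}, and picking the dilation constant $C=C_{\beta}$ so that all cubes $Q_k^{\bullet}$ remain inside $C^{\beta}B_0$ for every $x\in B_0$. Both are absorbed into the implicit constants, and the convergence of the telescoping is driven purely by $\alpha>0$. The unlocalized statement $\Norm{b}{\BMO^{\beta,\alpha}}\sim \Norm{b}{\dot C^{0,\beta,\alpha}}$ follows by taking $B_0\uparrow\R^n$ and $l\uparrow\infty$ in \eqref{eq:scaleloc1}.
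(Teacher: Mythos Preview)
Your proposal is correct and is precisely the classical Campanato--Meyers telescoping argument that the paper invokes by reference (the paper omits the details, citing Meyers \cite{Meyers1964} for the isotropic case and asserting the anisotropic and localized versions are identical). The minor bookkeeping you flag---ensuring $\ell(Q_0)\le l$ when $\abs{x-y}_\beta$ is close to $l$, and containing the chains in $C^{\beta}B_0$---is indeed absorbed into the constants exactly as you indicate.
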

\begin{rem}\label{rem:dens:easy} We remark at this point that Uchiyama's approximation is $L^{\infty}_{\loc},$ and so are continuous functions (in particular by Lemma \ref{lem:H\"{o}lderBMOa} all functions in $\BMO^{\beta,\alpha}$). Thus we have now shown that $L^{\infty}_{\loc}$ is dense in $\VMO^{\beta,\alpha},$ for arbitrary data $\beta\in\R_ +^n$ and $\alpha\geq 0.$ Recall that this was enough to conclude the proof of Theorem \ref{thm:nec}. In other words, the reader not interested in obtaining the density of $C^{\infty}_c$ may skip to the next Section \ref{sect:LB}.
\end{rem}

\begin{proof}[Proof of Lemma \ref{lem:H\"{o}lderBMOa}] The proof with the scale constraint $\leq l$ and a localization to a top $\beta$-ball $B_0$ is identical to the proof without a localization. The proof of arbitrary $\beta\in\R_+^n$ is identical to the case $\beta = (1,\dots,1),$ for which we refer to N. Meyers \cite{Meyers1964}.
\end{proof}
	\begin{proof}[Proof of Theorem \ref{thm:GuoDensity}]  
		Without loss of generality suppose that $f$ is real-valued.
		By the conditions \eqref{eq:VMOv1} and \eqref{eq:VMOv2} let $i\leq k$ be such that 
		\begin{align}\label{eq:dena}
			\calO^{\alpha}(f;Q) \leq \varepsilon' \ell(Q),\qquad \mbox{ if } \qquad \ell(Q)\in (2^i,2^k)^c,
		\end{align}
		especially $\calO^{\alpha}(f;Q) > \varepsilon'$ possibly only for cubes $Q$ with $\ell(Q)\in (2^i,2^k)$ and it follows from the conditions \eqref{eq:VMOv1},  \eqref{eq:VMOv2} and  \eqref{eq:VMOv3} 
		that there exists $R>2^k$ so that 
		\begin{align}\label{eq:denb}
			\calO^{\alpha}(f;Q)\leq\varepsilon',\qquad \mbox{if} \qquad Q\cap Q_R^c \not= \emptyset,\qquad \mbox{where} \qquad Q_R := (-R,R)^n.
		\end{align}
		Actually a small argument is needed to conclude \eqref{eq:denb}, see e.g. the proof of \cite[Theorem A.4.]{HOS2022}.
		 By choosing $\varepsilon'$ small enough and reformulating \eqref{eq:dena} and \eqref{eq:denb} through Lemma \ref{lem:H\"{o}lderBMOa} we obtain 
		\begin{align}\label{eq:de2}
			\Norm{f}{\dot C^{0,1,\alpha}(Q_{R}^c)} + 	\Norm{f}{\dot C^{0,1,\alpha}_{2^i}} \leq \vare,
		\end{align}
	for any prescribed $\varepsilon>0.$

		Then, define
		\begin{align}\label{eq:ext1}
			f_{R}^e(x) = 1_{Q_R}(x)f(x) + 1_{Q_R^c}(x)\inf_{y\in \partial Q_R}\{f(y)+\varepsilon\abs{x-y}^{\alpha}\},
		\end{align}
		which extends $f1_{Q_R}.$
		Next we check that
		\begin{align}\label{eq:de3}
			\Norm{f_R^e}{\dot C^{0,1,\alpha}(Q_R^c)}\leq \varepsilon
		\end{align}
		and
		\begin{align}\label{eq:de4}
			\Norm{f_R^e}{\dot C^{0,1,\alpha}_{2^i}(\R^n)}\leq \varepsilon.
		\end{align}
		
		The boundary being closed, the infimum is achieved as a minimum and for each $x\in Q_R^c$ we specify one $y_x\in \partial Q_R$ so that 
		\begin{align}\label{minimizer}
			f_{R}^e(x) = f(y_x)+\varepsilon\abs{x-y_x}^{\alpha}.
		\end{align}	
		Let $x,z\in Q_R^c.$ Without loss of generality we assume that $f_R^e(x)\geq f_R^e(z)$ and then using triangle inequality and $\alpha\leq 1$ we obtain 
		\begin{align*}
			f_R^e(x) - f_R^e(z) &\leq f(y_x)+\vare\abs{x-y_x}^{\alpha} - (f(y_x)+\vare\abs{z-y_x}^{\alpha}) = \vare\left( \abs{x-y_x}^{\alpha} -\abs{z-y_x}^{\alpha}  \right) \\
			 &\leq \vare\left( \abs{x-z}^{\alpha}+\abs{z-y_x}^{\alpha} -\abs{z-y_x}^{\alpha}  \right)\leq \vare\abs{x-z}^{\alpha}.
		\end{align*}
		Thus \eqref{eq:de3} is checked.
		Clearly \eqref{eq:de4} follows from \eqref{eq:de3} and
		\[
		1_{\partial Q_R}f = 1_{\partial Q_R}f_K^e,\qquad \Norm{f}{\dot C^{0,1,\alpha}_{2^i}(Q_R)}\leq\vare
		\]
		of which only the left remains to be checked, the right being immediate from \eqref{eq:de2}.
		If $x\in\partial Q_R,$ then from the definition of infimum
		\[
		f_R^e(x) =  f(y_x)+\varepsilon\abs{x-y_x}^{\alpha}\leq f(x)+\varepsilon\abs{x-x}^{\alpha} = f(x).
		\]
		Thus by $\Norm{f}{\dot C^{0,\beta,\alpha}(\partial Q_R)}\leq \Norm{f}{\dot C^{0,\beta,\alpha}(Q_R^c)}\leq\varepsilon$ we obtain
		\[
		0\leq f(x)-f_R^e(x) = f(x)-f(y_x) - \varepsilon\abs{x-y_x}_{\beta}^{\alpha}\leq (\Norm{f}{\dot C^{0,\beta,\alpha}(\partial Q_R)}-\varepsilon)\abs{x-y_x}^{\alpha}\leq 0.
		\]
				
		Next we obtain a compact support by truncating $f_R^e.$ From that $f$ is bounded on $Q_R$ we find some $M >> R$ large enough and a point $c_M\in Q_{2M}\setminus Q_M$ so that 
		\begin{align}\label{eq:de5}
			\sup_{ Q_M}f_R^e < f_R^e(c_M) < \inf_{ \R^n\setminus Q_{2M}}f_R^e.
		\end{align}
		Then, consider the sphere-like set
		\[
		\mathfrak{S} = \{x\in\R^n: f_R^e(x) = f(c_M)\}\subset Q_{2M}\setminus Q_M;
		\]
		by the intermediate value theorem $\{tx:t>0\}\cap \mathfrak{S}\not=\emptyset$ for each $x\in\R^n;$ moreover $\mathfrak{S}$ is connected from the continuity of $f.$
		We let  $x\in \mathfrak{S}_{\op{in}}$ if there exists a path from $x$ to the origin not intersecting $\mathfrak{S}.$
		Then we truncate
		\begin{align}\label{eq:de7}
			f_{\varepsilon} = 1_{\mathfrak{S}_{\op{in}}}f_R^e + 1_{\mathfrak{S}_{\op{out}}}f_R^e(c_M)
		\end{align}
		so that $f_{\varepsilon}$ is constant on  $\mathfrak{S}_{\op{out}}=\R^n\setminus \mathfrak{S}_{\op{in}},$ and clearly, for all $l>0$ and  $A\subset\R^n,$ we have 
		\begin{align}\label{eq:de8}
			\Norm{f_{\varepsilon}}{\dot C^{0,1,\alpha}_l(A)}\leq \Norm{f_R^e}{\dot C^{0,1,\alpha}_l(A)}.
		\end{align}
	
		Next we show that  
		\begin{align}\label{eq:de9}
			\Norm{f-f_{\varepsilon}}{\dot C^{0,1,\alpha}}\lesssim\varepsilon
		\end{align}
	by checking cases.
		If $x,y\in Q_R,$ then from that $f_{\varepsilon}=f_R^e=f$ on $Q_R\subset Q_M\subset \mathfrak{S}_{\op{in}},$ this case is cleared.
		If $x,y\in Q_R^c,$ then by \eqref{eq:de2}, \eqref{eq:de3} and \eqref{eq:de8} we have 
		\begin{align}\label{eq:de10}
			\Norm{f_{\varepsilon}}{\dot C^{0,1,\alpha}(Q_R^c)}\leq\Norm{f_R^e}{\dot C^{0,\beta,\alpha}(Q_R^c)} \leq	\Norm{f}{\dot C^{0,\beta,\alpha}(Q_R^c)}\leq \varepsilon.
		\end{align}
		Thus this case is   cleared.	
		Lastly, if $x\in Q_R$ and $y\in Q_R^c,$ then using that $f=f_{\vare}$ on $Q_R\cup\partial Q_R$ and picking the point $z\in\partial Q_R\cap \{tx+(1-t)y : t\in [0,1]\},$ we find
		\begin{align*}
			\abs{(f-f_{\varepsilon})(x) - (f-f_{\varepsilon})(y)} &= \abs{(f-f_{\varepsilon})(z) - (f-f_{\varepsilon})(y)} \\
			&\lesssim\big(\Norm{f}{\dot C^{0,\beta,\alpha}(Q_R^c)} + \Norm{f_{\varepsilon}}{\dot C^{0,\beta,\alpha}(Q_R^c)}\big)  \abs{z-y}^{\alpha} \lesssim \varepsilon\abs{x-y}^{\alpha}.
		\end{align*}
		We have now shown that $f$ can be approximated by a continuous compactly supported functions. It does not take much more to upgrade $C_c$ to $C^{\infty}_c,$ which is what we do next. 
		
		Let
		\begin{align}\label{eq:mollify1}
		h_{\varepsilon}(x) = \int_{\R^n} \eta_{2^i}(y)f_{\varepsilon}(x-y)\ud y,\qquad	0\leq \eta_{2^i}\in C^{\infty}_c(B(0,2^{i-1})),\qquad \Norm{\eta_{2^i}}{L^1} = 1,
		\end{align}
		so that clearly $h_{\varepsilon}\in C^{\infty}_c.$
		We have  
		\begin{align*}
			\abs{h_{\vare}(x)-h_{\vare}(y)}= \Babs{\int_{\R^n}\eta_{2^i}(z)\left(f_{\varepsilon}(x-z)-f_{\varepsilon}(y-z)\right) \ud z} \leq  \Norm{\eta_{2^i}}{L^1}\Norm{f_{\varepsilon}}{\dot C^{0,1,\alpha}_{\abs{x-y}}}\abs{x-y}^{\alpha},
		\end{align*}
		which shows that $\Norm{h_{\varepsilon}}{\dot C^{0,1,\alpha}_{l}}\leq \Norm{f_{\varepsilon}}{{\dot C^{0,1,\alpha}_{l}}}$ on every scale $l,$ and in particular that
		\begin{align}\label{eq:de11}
			\Norm{h_{\varepsilon}}{\dot C^{0,1,\alpha}_{2^i}}\leq \Norm{f_{\varepsilon}}{{\dot C^{0,1,\alpha}_{2^i}}}\leq \Norm{f_R^e}{{\dot C^{0,1,\alpha}_{2^i}}}\leq \varepsilon.
		\end{align}
		By Lemma \ref{lem:H\"{o}lderBMOa}, we obtain
		\begin{align}\label{eq:scalessmall}
			\sup_{\ell(Q)\leq 2^i}\calO^{\alpha}(f_{\varepsilon}-h_{\varepsilon};Q) \leq \sup_{\ell(Q)\leq 2^i}\calO^{\alpha}(f_{\varepsilon};Q) + \sup_{\ell(Q)\leq 2^i}\calO^{\alpha}(h_{\varepsilon};Q) \lesssim \varepsilon.
		\end{align}
		Then, suppose that $\ell(Q) > 2^i$ and write $Q = \cup_{j}Q_j$ as a finitely overlapping union of cubes with $\ell(Q_j)=2^{i-1}.$ Then,
		\begin{equation}\label{eq:scaleslarge}
			\begin{split}
				\ell(Q)^{\alpha+n}\calO^{\alpha}(f_{\varepsilon}-h_{\varepsilon};Q) &\lesssim \sum_j\int_{Q_j}\abs{f_{\varepsilon}(x)-h_{\varepsilon}(x)}\ud x \\ 
				&\leq \sum_j\int_{Q_j}\int_{2Q_ j}\eta_{2^i}(y)\abs{f_{\varepsilon}(x)-f_{\varepsilon}(x-y)}\ud y \ud x \\
				&\lesssim \Norm{f_{\varepsilon}}{\dot C^{0,1,\alpha}_{2^i}}\sum_j\ell(Q_j)^{\alpha+n}\lesssim \varepsilon 	\ell(Q)^{\alpha+n},
			\end{split}
		\end{equation}
		which shows that $\sup_{\ell(Q) >  2^i}\calO^{\alpha}(f_{\varepsilon}-h_{\varepsilon};Q)\lesssim \varepsilon.$  Taking into account   \eqref{eq:scalessmall} we have now shown that $\Norm{f_{\varepsilon}-h_{\varepsilon}}{\BMO^{1,\alpha}}\lesssim \varepsilon,$ which concludes the proof.
	\end{proof}

Recalling that the case $\alpha = 0$ is clear, to prove Theorem \ref{thm:main:dens} it remains to prove the following proposition.
\begin{prop}\label{prop:density3}
	Let $\beta\in\R_+^n$ and $\alpha > 0.$ Let $f\in\VMO^{\beta,\alpha}.$ Then for each $\vare>0$ there exists $g\in C^{\infty}_c$ such that  
	\begin{align}\label{eq:approx}
		\Norm{f-g}{\BMO^{\beta,\alpha}}\leq\varepsilon.
	\end{align}
\end{prop}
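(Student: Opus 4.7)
The plan is to follow the three-step strategy used to prove Theorem \ref{thm:GuoDensity} ($\beta$-H\"older extension, truncation, mollification), now adapted to anisotropic dilations. The principal new difficulty, which is the main obstacle, is that the McShane-type formula $f_R^e(x) = \inf_{y\in\partial Q_R}\{f(y)+\vare\abs{x-y}^\alpha\}$ used in Theorem \ref{thm:GuoDensity} relies crucially on $\abs{\cdot}^\alpha$ being a metric (valid for $0<\alpha\le 1$), whereas for general $\beta\in\R_+^n$ and $\alpha>0$ the anisotropic expression $\abs{\cdot}_\beta^\alpha$ need not satisfy the triangle inequality even up to a multiplicative constant, so this direct construction fails.

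First, I would combine the three VMO conditions \eqref{eq:VMOv1}, \eqref{eq:VMOv2}, \eqref{eq:VMOv3} with Lemma \ref{lem:H\"{o}lderBMOa} to pick a small scale $2^i$, a large scale $2^k$ and a radius $R>2^k$ such that
$$
\Norm{f}{\dot C^{0,\beta,\alpha}_{2^i}(\R^n)} + \Norm{f}{\dot C^{0,\beta,\alpha}(Q_R^c)} \lesssim \vare;
$$
the first seminorm is immediate from \eqref{eq:VMOv1} and Lemma \ref{lem:H\"{o}lderBMOa}, while the second requires a localization of Lemma \ref{lem:H\"{o}lderBMOa} to the open set $Q_R^c$, combined with \eqref{eq:VMOv2} and \eqref{eq:VMOv3}. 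Next, in place of McShane, I would construct $f_R^e$ via a Whitney-type decomposition of $Q_R^c$ into $\beta$-cubes $\{Q_m\}$ with $\ell(Q_m)\sim \dist_{\beta}(Q_m,\partial Q_R)$; for each $Q_m$ I select a closest boundary point $y_m\in \partial Q_R$ in the quasi-metric $\abs{\cdot}_\beta$ and set
$$
f_R^e = 1_{Q_R}f + \sum_m f(y_m)\varphi_m,
$$
for a $\beta$-adapted smooth partition of unity $\{\varphi_m\}$ subordinate to a mild enlargement of $\{Q_m\}$. A chaining argument across neighbouring Whitney cubes, using only the small H\"older norm of $f$ on $\partial Q_R$ and the quasi-triangle inequality of $\abs{\cdot}_\beta$, would then transfer the $\vare$-smallness of $\Norm{f}{\dot C^{0,\beta,\alpha}(Q_R^c)}$ into a $\vare$-small global small-scale H\"older modulus for $f_R^e$.

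Once such an $f_R^e$ is produced, the last two steps carry over from Theorem \ref{thm:GuoDensity} almost verbatim, with $\beta$-cubes replacing cubes throughout. The level-set construction $\mathfrak{S}, \mathfrak{S}_{\op{in}}, \mathfrak{S}_{\op{out}}$ used to pass from $f_R^e$ to a compactly supported continuous $f_\vare$ depends only on the continuity of $f_R^e$ and on the intermediate-value theorem, and the final mollification $h_\vare = f_\vare \ast \eta_{2^i}$ against a $\beta$-adapted smooth bump $\eta_{2^i}$ yields $h_\vare\in C^{\infty}_c$. The bound $\Norm{f_\vare-h_\vare}{\BMO^{\beta,\alpha}}\lesssim \vare$ then follows exactly as in \eqref{eq:scalessmall} and \eqref{eq:scaleslarge}: splitting $\beta$-cubes into $\ell(Q)\le 2^i$, controlled via Lemma \ref{lem:H\"{o}lderBMOa} by the small H\"older norm, and $\ell(Q)>2^i$, controlled by an $L^1$-style estimate. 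Combining with $\Norm{f-f_\vare}{\BMO^{\beta,\alpha}}\lesssim\vare$ from the extension and truncation bounds yields the desired $g = h_\vare\in C^{\infty}_c$.
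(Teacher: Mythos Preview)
Your Whitney-extension idea for replacing the McShane formula is reasonable and would indeed produce an $f_R^e$ with $\Norm{f_R^e}{\dot C^{0,\beta,\alpha}(Q_R^c)}\lesssim\vare$ and $\Norm{f_R^e}{\dot C^{0,\beta,\alpha}_{2^i}}\lesssim\vare$. However, the truncation step has a genuine gap. The level-set construction in the proof of Theorem \ref{thm:GuoDensity} does \emph{not} depend only on continuity and the intermediate value theorem: it requires that $f_R^e(x)\to\infty$ as $\abs{x}\to\infty$, which is what guarantees the existence of $M$ and $c_M$ with $\sup_{Q_M}f_R^e<f_R^e(c_M)<\inf_{\R^n\setminus Q_{2M}}f_R^e$, and hence that the level set $\mathfrak{S}$ is trapped in a bounded annulus. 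The McShane extension $f(y_x)+\vare\abs{x-y_x}^\alpha$ has this growth built in, but your Whitney extension $\sum_m f(y_m)\varphi_m$ is bounded (it takes values in the convex hull of $f(\partial Q_R)$), so no such $c_M$ exists and the truncation fails as written.

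The paper sidesteps this by abandoning the extension-then-truncation strategy altogether. It introduces a continuous map $\tau:\R^n\to B_\beta(0,M)$ that is the identity on $B_\beta(0,M)$, contracts the annulus $M\le\abs{x}_\beta<2M$ anisotropically, and sends $\abs{x}_\beta\ge 2M$ to the origin; the key properties are $\abs{\tau(x)-\tau(z)}_\beta\lesssim\abs{x-z}_\beta$ globally and $\abs{\tau(x)-\tau(z)}_\beta\lesssim M^{-1}\abs{x-z}_\beta$ whenever $x\in B_\beta(0,M)^c$ and both $\tau(x),\tau(z)\in B_\beta(0,2R)$. The approximant is then defined in one stroke as $g=(f*\eta_i)\circ\tau$, which is automatically constant outside $B_\beta(0,2M)$, so no separate truncation is needed. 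The two Lipschitz-type bounds on $\tau$, together with the smallness of $\Norm{f}{\dot C^{0,\beta,\alpha}_i}$ and of the H\"older seminorm of $f$ over pairs with at least one point in $B_\beta(0,R)^c$, yield $\Norm{f-g}{\dot C^{0,\beta,\alpha}}\lesssim\vare$ by a direct case analysis on the positions of $x,z$; a final mollification upgrades $C_c$ to $C^\infty_c$.
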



\begin{proof}[Proof of Proposition \ref{prop:density3}] It follows from the conditions \eqref{eq:VMOv1},  \eqref{eq:VMOv2} and  \eqref{eq:VMOv3} that we find $0<i<<R$ such that 
	\begin{align}\label{eq:denaa}
		\frac{\abs{f(x)-f(z)}}{\abs{x-z}_{\beta}^{\alpha}}\leq \varepsilon,\quad \mbox{ if }\quad \abs{x-z}_{\beta} \leq i
	\end{align}
and
	\begin{align}\label{eq:dencc}
		\sup_{x\in B_{\beta}(0,R)^c}\sup_{y\in\R^n}\frac{\abs{f(x)-f(y)}}{\abs{x-y}_{\beta}^{\alpha}}\leq \vare.
	\end{align}
	Pick a bump
	\begin{align*}
		0\leq \eta_{i} \in C^{\infty}(B_{\beta}(0,i)),\qquad  \Norm{\eta_{i}}{L^1} = 1.
	\end{align*}
	Consider the following function
	\begin{align*}
		\tau:\R^n\to B_{\beta}(0,M),\qquad	\tau(x) = \begin{cases}
			x,\qquad &\abs{x}_{\beta}<M, \\
		\left(	\frac{2M-\abs{x}_{\beta}}{M}\right)^{\beta}x,\qquad M\leq &\abs{x}_{\beta}<2M,\\
			0,\qquad &\abs{x}_{\beta}\geq 2M.
		\end{cases}
	\end{align*}
There holds that
\begin{align}\label{eq:tau1}
	\abs{\tau(x)-\tau(z)}_{\beta} \lesssim \abs{x-z}_{\beta},
\end{align}
and if $M>>R$ is chosen sufficiently large, that
	\begin{align}\label{eq:tau3}
		\abs{\tau(x)-\tau(z)}_{\beta} \lesssim M^{-1}\abs{x-z}_{\beta},\quad\mbox{if}\quad\tau(x),\tau(z)\in B_{\beta}(0,2R),\quad x \in B_{\beta}(0,M)^c.
	\end{align}
	The properties \eqref{eq:tau1} and \eqref{eq:tau3} are clear when $\beta = (1,\dots,1)$ and similarly checked for arbitrary $\beta\in \R_+^n,$ we omit the details.
	Now, let $\tau\in C_c$ be any function satisfying the properties \eqref{eq:tau1} and \eqref{eq:tau3},
	then, we define
	\begin{align}\label{eq:funct:approx}
		g = (f*\eta_{i})\circ\tau \in C_c.
	\end{align}
	
	We first show that there exists an absolute constant $c>0$ such that 
	\begin{align}\label{eq:dens1}
		\Norm{g}{\dot C^{0,\beta,\alpha}_{c i}} \lesssim \varepsilon.
	\end{align}
	By \eqref{eq:tau1} there exists $c^{-1}>0$ such that
	$
	\abs{\tau(x)-\tau(z)}_{\beta}\leq c^{-1}\abs{x-z}_{\beta},
	$
	so let $\abs{x-z}_{\beta}\leq ci$ and obtain  $\abs{\tau(x)-\tau(z)}_{\beta}\leq i.$
	Thus by \eqref{eq:denaa} and \eqref{eq:dencc} we find
	\begin{equation}\label{eq:eq1}
		\begin{split}
			\abs{g(x)-g(z)} &\leq \int \abs{\eta_{i}(y)}\abs{f(\tau(x)-y)-f(\tau(z)-y)}\ud y \\ 
			&\qquad\lesssim \Norm{f}{\dot C^{0,\beta,\alpha}_{i}}\Norm{\eta_{i}}{L^1}\abs{\tau(x)-\tau(z)}_ {\beta}^{\alpha}\lesssim \varepsilon \abs{x-z}_ {\beta}^{\alpha},
		\end{split}
	\end{equation}
	and \eqref{eq:dens1} is checked.
	
	Then we show that 
	\begin{align}\label{eq:dens2}
		\sup_{x\in B_{\beta}(0,M)^c}\sup_{z\in\R^n}\frac{\abs{g(x)-g(z)}}{\abs{x-z}_{\beta}^{\alpha}}\leq \vare.
	\end{align}
	
	Let $x\in B_{\beta}(0,M)^c$ be fixed and we do cases. If $\tau(x)\in B_{\beta}(0,2R)^c,$ then $\tau(x)-y\in B_{\beta}(0,R)^c,$ for each $y\in\supp(\eta_{i}),$ by $i<<R.$ Thus by \eqref{eq:dencc}, we have 
	\begin{align*}
		\abs{g(x)-g(z)} &\leq \int \abs{\eta_{i}(y)}\abs{f(\tau(x)-y)-f(\tau(z)-y)}\ud y \\  
		&\leq 	\left(\sup_{u\in B_{\beta}(0,R)^c}\sup_{v\in\R^n}\frac{\abs{f(u)-f(v)}}{\abs{u-v}_{\beta}^{\alpha}}\right)\abs{\tau(x)-\tau(z)}_{\beta}^{\alpha}\lesssim \varepsilon\abs{x-z}_{\beta}^{\alpha},
	\end{align*}
	where in the last estimate we used \eqref{eq:tau1}.
	The symmetrical case  $\tau(z)\in B_{\beta}(0,2R)^c$ is identical. 
	Then consider $\tau(x),\tau(z)\in B_{\beta}(0,2R).$ By \eqref{eq:tau3} we obtain
	\begin{equation}\label{eq:dens3}
		\begin{split}
			\abs{g(x)-g(z)} &\leq \int \abs{\eta_{i}(y)}\abs{f(\tau(x)-y)-f(\tau(z)-y)}\ud y \\ 
			&\lesssim \Norm{f}{\dot C^{0,\beta,\alpha}(B_{\beta}(0,3R))}\abs{\tau(x)-\tau(z)}_{\beta}^{\alpha} \\ 
			&\lesssim  M^{-1}  \Norm{f}{\dot C^{0,\beta,\alpha}(B_{\beta}(0,3R))}\abs{x-z}_{\beta}^{\alpha} \lesssim \vare\abs{x-z}_{\beta}^{\alpha},
		\end{split}
	\end{equation}
	where in the last estimate we demanded $M>>R$ to be sufficiently large.
	Now we fix such an $M>>R$ that both \eqref{eq:dens1} and \eqref{eq:dens2} hold.

	Then, we show that
	\[
	\Norm{f-g}{\dot C^{0,\beta,\alpha}}\lesssim \varepsilon.
	\] 
	We do cases. Small scales follow immediately from the estimate
	\begin{align}\label{eq:scales:small}
		\Norm{f-g}{\dot C^{0,\beta,\alpha}_{i}}\leq \Norm{f}{\dot C^{0,\beta,\alpha}_{i}} + \Norm{g}{\dot C^{0,\beta,\alpha}_{i}} \lesssim \Norm{f}{\dot C^{0,\beta,\alpha}_{i}} + \Norm{g}{\dot C^{0,\beta,\alpha}_{ci}}\lesssim \varepsilon,
	\end{align}
	true by \eqref{eq:denaa} and \eqref{eq:dens1}. 
	Then, suppose that $\abs{x-z}_{\beta}>i$ and $x,z\in B_{\beta}(0,\frac{M}{C}),$ where $C>1$ is the absolute constant from Lemma \ref{lem:H\"{o}lderBMOa}. We will show that
	\[
	\abs{(f-g)(x)-(f-g)(z)}\lesssim \varepsilon\abs{x-z}_{\beta}^{\alpha}.
	\]
	By Lemma \ref{lem:H\"{o}lderBMOa} it is enough to show that for all $\beta$-balls $B\subset B_{\beta}(0,M),$ there holds that $\calO^{\alpha}(f-g;B)\lesssim\varepsilon.$ So fix such $B.$ If $\op{rad}_{\beta}(B)\lesssim i,$ then $\calO^{\alpha}(f-g;B)\lesssim\varepsilon$ follows directly from \eqref{eq:scalessmall}. If $\op{rad}_{\beta}(B)\gtrsim i,$ let there be a finitely overlapping covering $B\subset \cup_j B_j$ with $\beta$-balls $B_j$ of $\op{rad}_{\beta}(B_j)= i$ and such that $\abs{\cup_jB_j}\lesssim \abs{B}.$ For each single $B_j$ we have 
	\begin{align*}
		\int_{B_j}\abs{f-g} &\leq \int_{B_j}\int\eta_i(y)\abs{f(x)-f(x-y)}\ud y\ud x \\ 
		&\lesssim \int_{B_j}\fint_{2^{\beta}B_j}\abs{f(x)-f(x-y)}\ud y\ud x \lesssim \Norm{f}{\dot C^{0,\beta,\alpha}_{2i}}\abs{B_j}^{1+\alpha/\abs{\beta}} \lesssim \varepsilon\abs{B_j}^{1+\alpha/\abs{\beta}},
	\end{align*}
	and thus in total,
	\begin{align*}
		\abs{B}^{1+\alpha/\abs{\beta}}\calO^{\alpha}(f-g;B)&\lesssim \sum_{j}\int_{B_j}\abs{f-g} \lesssim \varepsilon\sum_{j}\abs{B_j}^{1+\alpha/\abs{\beta}} \lesssim  \varepsilon \abs{B}^{1+\alpha/\abs{\beta}}.
	\end{align*}
	Finally, we check that
	\begin{align}
		\sup_{x\in B_{\beta}(0,\frac{M}{C})^c} \sup_{z\in\R^n} \frac{\abs{(f-g)(x)-(f-g)(z)}}{\abs{x-z}_{\beta}^{\alpha}}\lesssim \vare.
	\end{align} 
	We let $M$ be so big that both \eqref{eq:denaa} and \eqref{eq:dens2} hold with $M/C$ in place of $M,$ which is possible by the fact that $C$ is an absolute constant. Then, we obtain 
	\begin{align*}
		&\sup_{x\in B_{\beta}(0,\frac{M}{C})^c}  \sup_{z\in\R^n}  \frac{\abs{(f-g)(x)-(f-g)(z)}}{\abs{x-z}_{\beta}^{\alpha}} \\ 
		&\qquad\leq 	\sup_{x\in B_{\beta}(0,\frac{M}{C})^c}  \sup_{z\in\R^n}  \frac{\abs{f(x)-f(z)}}{\abs{x-z}_{\beta}^{\alpha}}+	\sup_{x\in B_{\beta}(0,\frac{M}{C})^c}  \sup_{z\in\R^n}  \frac{\abs{g(x)-g(z)}}{\abs{x-z}_{\beta}^{\alpha}}\lesssim \vare.
	\end{align*}
	We have now shown that $f$ can be approximated by $g\in C_c.$ 
	
	To upgrade $C_c$ to $C^{\infty}_c$ mollify $g*\eta_i$ and argue similarly as around and after the line \eqref{eq:mollify1} to show that $g*\eta_i\in C^{\infty}_c$ approximates $g;$ alternatively one can modify $\tau$ to achieve $\tau\in C^{\infty}_c$ to obtain $g\in C^{\infty}_c$ as early as \eqref{eq:funct:approx}.
\end{proof}

\section{Awf in the plane}\label{sect:LB}
In this section we remark how to extend the awf argument to those monomial curves that intersect opposite quadrants of the plane.
We emphasize that this section is not independent as such and should be treated as a companion to \cite[Section 3.]{Oik2022}.

\subsection{Outline} We begin by describing what we want to achieve.
Let $Q\in\calR^{\gamma}$ be fixed and write
\begin{align}\label{eq:dualize}
	\int_Q\abs{b-\ave{b}_Q} = \int bf,\qquad 1_Qf = f,\quad\abs{f}\leq 2,\quad \int f= 0.
\end{align}
For now let $W_1,P\subset \R^2$ be unspecified sets and $g_S$ a generic function supported on the set $S\in\{Q,W_1,P\}.$ We formally expand 
	\begin{align}\label{A1}
	f  = \left[h_QH_{\gamma}^*g_{W_1} - g_{W_1} H_{\gamma}h_Q\right] + \left[ h_{W_1} H_{\gamma}^*g_P - g_PH_{\gamma}h_{W_1}\right]  + \wt{f}_P,
\end{align}  
where
\begin{align}\label{A2}
	h_Q = \frac{f}{H_{\gamma}^*g_{W_1}},\qquad h_{W_1} = \frac{g_{W_1} H_{\gamma}h_Q}{H_{\gamma}^*g_P},\qquad \wt{f}_P = g_PH_{\gamma}\Big( \frac{g_{W_1}}{H_{\gamma}^*g_P}H_{\gamma}\big( \frac{f}{H_{\gamma}^*g_{W_1}}\big)\Big).
\end{align} 
The major part of the proof of fixing the above unspecified data so that 
\begin{align}\label{A3}
	\abs{h_Q} \lesssim_{A}\abs{f},\qquad \abs{g_{W_1}}\lesssim 1_{Q_1},\qquad 	\abs{h_{W_1}}\lesssim_{A} \Norm{f}{\infty} 1_{W_1},	
\end{align}
and moreover so that for any fixed $\vare>0,$
\begin{align}\label{A33}
	 \abs{\wt{f}_P}\lesssim  \varepsilon\Norm{f}{\infty}1_P,\qquad\int_P\wt{f}_P = 0.
\end{align}
Once we obtain \eqref{A3} and \eqref{A33} we are almost done.
We will see that $P\in\calR^{\gamma},$ and by \eqref{A33} the function $\wt{f}_P$ satisfies the same properties on $P$ as $f$ did on $Q$ (and in addition has the decay factor of $\varepsilon$) compare with \eqref{eq:dualize}. Then, we find a set $W_2\not=W_1$ and expand
		\begin{align}\label{A5}
		\wt{f}_P  = \left[h_PH_{\gamma}g_{W_2} - g_{W_2} H_{\gamma}^*h_P\right] + \left[ h_{W_2} H_{\gamma}g_Q - g_QH_{\gamma}^*h_{W_2}\right]  + \wt{(\wt{f}_P)}_Q,
	\end{align} 
and we have
\begin{align}\label{A6}
	\abs{h_P} \lesssim_{A} \abs{\wt{f}_P},\qquad \abs{g_{W_2}}\lesssim 1_{W_2},\qquad 	\abs{h_{W_2}}\lesssim_{A} \Norm{f}{\infty} 1_{W_2},
\end{align}
\begin{align}\label{A66}
	 \babs{\wt{(\wt{f}_P)}_Q}\lesssim  \varepsilon\Norm{\wt{f}_P}{\infty}1_Q,\qquad\int_Q \wt{(\wt{f}_P)}_Q = 0.
\end{align}
	Here is one difference to the proof in \cite{Oik2022}, where $W_1=W_2,$ see \cite[Section 3.1.]{Oik2022}. 
Finally, it will be clear from the construction of the sets, defined below in Section \ref{sec:geometry}, that 
\begin{align}\label{A7}
	\abs{Q}\sim\abs{P}\sim\abs{W_1}\sim\abs{W_2}.
\end{align}

\subsection{Geometry}\label{sec:geometry}
Without loss of generality we specify $\beta = (1,2)$ and end up with parabolic curves
\begin{align*}
	\gamma(t) = 
	\begin{cases}
		(\varepsilon_1\abs{t},\varepsilon_2\abs{t}^{2}),\quad t > 0, \\ 
		(\delta_1\abs{t},\delta_2\abs{t}^{2}),\quad t\leq 0,
	\end{cases}
\end{align*} 
and moreover specify to the two cases $\gamma\in \{\gamma_{\op{adj}},\gamma_{\op{opp}} \}$
as defined on the line \eqref{eq:adjopp}.

We now fix $Q=I\times J\in\calR^{(1,2)}$ and start working on a scale comparable to $\ell(Q)= \ell(I).$ We define two auxiliary intervals
\begin{align}\label{eq:auxintervals}
	I_{A_i} = \ell(I)[A_i,A_i+N_i],\qquad i=1,2,
\end{align}
and for the parameters specify
\begin{align}\label{eq:parameters}
 N_1  = 3,\qquad N_2 = 3,\qquad 	A_2 = 10A_1.
\end{align}
Then, define 
\begin{fig}[h]
	\centering
	\includegraphics[scale=0.9]{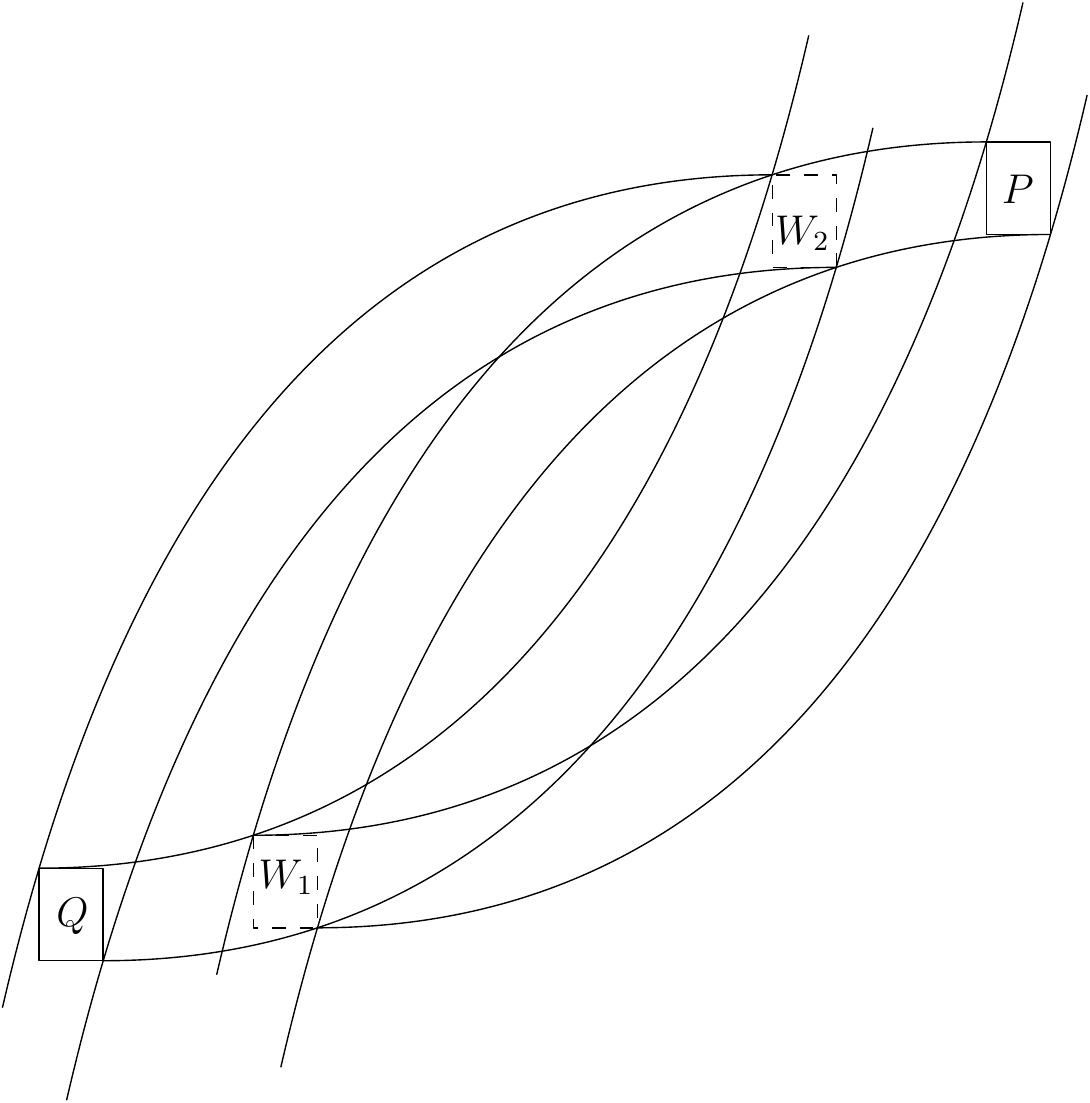}
	\caption{Showing some of the sets \eqref{eq:geometry1}. The set $V_1$ is the dashed box inside $W_1.$ The set $W_2$ is constructed by backtracing the process.
	} 
	\label{fig:geom}
\end{fig}
\begin{equation}\label{eq:geometry1}
	\begin{split}
		\begin{cases}
			&Q, \\ 
			&\wt{Q} = \big\{ x+\gamma(t): x\in Q,\, t\in I_{A_1}\big\}, \\
			&P =  V_1 + \gamma(c_{I_{A_2}}),\quad V_1 = Q +\gamma(c_{I_{A_1}}), \\
			&\wt{P} = \big\{ z-\gamma(t): z\in P,\, t\in I_{A_2}\big\},\quad \\
			&W_1 = \wt{Q}\cap \wt{S};
		\end{cases}
	\end{split}
\end{equation}
for a sketch see Figures \ref{fig:geom} and \ref{fig:key}.
We notate
\begin{align*}
 	I(a,\pm, B) &= \{t\in\R : a \pm \gamma(t)\in B\}\subset \R, \\
 	\phi(a,\pm,B) &= \{a\pm\gamma(t)\in B: t\in \R\} \subset B. 
\end{align*}
The variable $a\in\R^2$ is the reference point, the sign $\pm\in\{-,+\}$ indicates direction, while the last variable is a set $B\subset \R^2.$
\begin{lem}[Key-lemma]\label{lem:key} Let $z\in P$ be arbitrary. Then,
	\begin{align*}
		\bigcup_{y\in\phi(z,-,W_1)}\phi(y,-,Q) = Q.
	\end{align*}
\end{lem}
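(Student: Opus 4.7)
The reverse inclusion $\bigcup_{y \in \phi(z,-,W_1)} \phi(y,-,Q) \subseteq Q$ is immediate from the definition of $\phi(y,-,Q)$, so the content of the Key-lemma is the covering claim: for each $x_0 \in Q$, one must produce a $y \in W_1$ and an $s$ with $y = z - \gamma(t_1)$ for some $t_1$ and $y - \gamma(s) = x_0$. The task thus reduces, for each $x_0 \in Q$, to solving
\[
\gamma(t_1) + \gamma(t_2) = z - x_0,
\]
subject to $t_1$ lying in (essentially) $I_{A_2}$ and $t_2$ in $I_{A_1}$, in which case $y = z - \gamma(t_1)$ automatically lies in both $\wt{P}$ (because $z \in P$ and $t_1 \in I_{A_2}$) and $\wt{Q}$ (because $y - \gamma(t_2) = x_0 \in Q$ and $t_2 \in I_{A_1}$), so that $y \in W_1$.

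My plan is to exploit the explicit structure of $P$. Since $P = Q + \gamma(c_{I_{A_1}}) + \gamma(c_{I_{A_2}})$, we can write $z = q_z + \gamma(c_{I_{A_1}}) + \gamma(c_{I_{A_2}})$ for a unique $q_z \in Q$, turning the system into
\[
\bigl[\gamma(t_1) - \gamma(c_{I_{A_2}})\bigr] + \bigl[\gamma(t_2) - \gamma(c_{I_{A_1}})\bigr] = q_z - x_0 \in Q - Q,
\]
whose right-hand side is $O(\ell(I))$ in the first coordinate and $O(\ell(I)^2)$ in the second. The natural base point $(t_1,t_2) = (c_{I_{A_2}}, c_{I_{A_1}})$ corresponds to $x_0 = q_z$ and solves the system trivially. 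I would then invert near this base point using the implicit function theorem, or, in the parabolic case, the explicit quadratic formula: the Jacobian of $(t_1,t_2) \mapsto \gamma(t_1) + \gamma(t_2)$ at the base point is, up to signs, of order $c_{I_{A_2}} - c_{I_{A_1}} \sim A_2 - A_1 \sim A_1$, and the choice $A_2 = 10 A_1$ keeps it uniformly bounded below at scale $\ell(I)$. The displacement of the solution from the base point is therefore $O(\ell(I))$ in each coordinate, and the slack built into $N_1 = N_2 = 3$ guarantees $t_1 \in I_{A_2}$ and $t_2 \in I_{A_1}$.

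The main obstacle will be the uniformity across $\gamma_{\op{adj}}$ and $\gamma_{\op{opp}}$, where the sign conventions $\varepsilon,\delta$ control the branch of $\gamma$ used on $t \gtrless 0$. A clean way to bypass this is to arrange $A_1 > 0$ large enough so that $I_{A_1}, I_{A_2} \subset (0,\infty)$; then only the $\varepsilon$-branch of $\gamma$ is traversed, the $\delta$-signs become invisible, and the Jacobian/inversion argument runs identically in both cases. This is presumably why the authors emphasize that the proof works simultaneously for the two curves without case split. A final bookkeeping step will confirm that the resulting $y = z - \gamma(t_1)$ is genuinely in $W_1$ rather than merely in $\wt Q \cap \wt P$ in a set-theoretic sense, but this is automatic from the interval constraints above.
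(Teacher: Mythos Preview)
Your proposal is correct and takes a genuinely different route from the paper. The paper argues purely geometrically from Figure~\ref{fig:key}: for any $z\in P$ the segment $\phi(z,-,W_1)$ meets both the top and bottom edges of $W_1$ (since $A_2\gg A_1$ forces the incoming parabolic arc to be much steeper than the tilt of $W_1$), and the family $\{\phi(y,-,\R^2):y\in\phi(z,-,W_1)\}$ then sweeps across all of $Q$ by the intermediate value theorem. Your approach instead writes $z=q_z+\gamma(c_{I_{A_1}})+\gamma(c_{I_{A_2}})$ and inverts the map $(t_1,t_2)\mapsto\gamma(t_1)+\gamma(t_2)$ near the base point $(c_{I_{A_2}},c_{I_{A_1}})$; in the parabolic case this is just the quadratic formula, and since $I_{A_1},I_{A_2}\subset(0,\infty)$ only the $\varepsilon$-branch of $\gamma$ enters, so $\gamma_{\op{adj}}$ and $\gamma_{\op{opp}}$ are handled simultaneously, as you note.

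One point worth sharpening: your statement that ``the displacement of the solution from the base point is $O(\ell(I))$ in each coordinate, and the slack built into $N_1=N_2=3$ guarantees $t_i$ in the right intervals'' is correct but delicate. A direct computation with the inverse Jacobian (or the explicit quadratic formula) gives $|t_1-c_{I_{A_2}}|\lesssim\frac{1}{9}\ell(I)$ but $|t_2-c_{I_{A_1}}|\lesssim\frac{10}{9}\ell(I)+O(\ell(I)/A_1)$, so $t_2$ stays in $I_{A_1}$ (half-width $\frac{3}{2}\ell(I)$) with genuine but modest room to spare; the specific ratio $A_2/A_1=10$ matters here. The paper's geometric argument is more pictorial and less quantitative, while your approach is explicit and makes clear exactly which parameter choices are needed; both ultimately rely on taking $A_1$ large.
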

\begin{proof} Consider Figure \ref{fig:key}. Let $z\in P$ be arbitrary. The parabolic segment $\phi(z,-,W_1)$ contains a point from the top $y_t$ and bottom $y_b$ parts of the boundary of $W_1;$ that such points $y_t,y_b$ exist follows from $A_2>>A_1,$ see the dashed tangents and how $W_1$ is deeply embedded inside $\wt{Q}$ (the dashed area enclosing $W_1$). 
 Now $Q$ lies entirely between the segments $\phi(y_t,-,\mathbb{R}^2)$ and $\phi(y_b,-,\mathbb{R}^2).$ In particular, the parabolic segments $\phi(y,-,\mathbb{R}^2)\cap Q = \phi(y,-,Q)$ that begin from points $y\in\phi(z,-,W_1)$ foliate $Q.$
\end{proof}
\begin{fig}[h]
	\centering
	\includegraphics[scale=0.9]{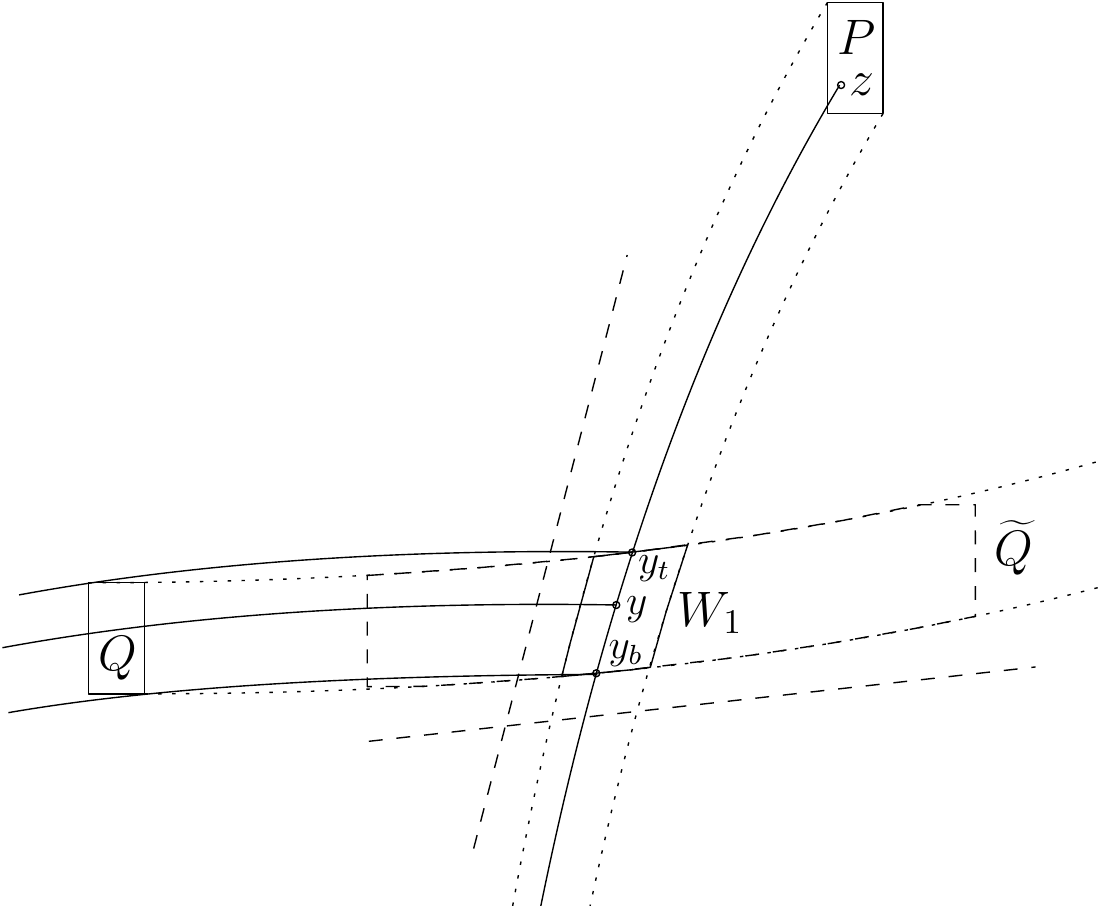}
	\caption{Proof of Key-lemma. Showing as dashed tangents the relative size $A_2>>A_1.$
	} 
	\label{fig:key}
\end{fig}
Lemma \ref{lem:key} is the covering property of the original rectangle $Q$ in terms of parabolic segments and the main component of what in the end allows us to obtain the bound on the line \eqref{A33}.

To setup everything else, we need to specify a few more minor variations of the lemmas that appear in \cite{Oik2022}.
The exact value of the constant $C$ appearing Lemma \ref{lem:geom12} below depends on the fixed parameters $N_1,N_2,A_2/A_1$ appearing on the line \eqref{eq:parameters}, its precise value however is not important.
\begin{lem}\label{lem:geom12}  Let $Q = I\times J \in\calR^{\gamma}$ and $x\in Q.$ Then, there exists an absolute constant $C$ so that   
	\begin{align*}
		\lim_{A_1\to\infty}\frac{\abs{I(x,+,W_1)}}{\ell(I)} = C.
	\end{align*}
	with uniform convergence independent of the data $x,Q.$
\end{lem}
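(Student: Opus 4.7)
The plan is to exploit the $\beta$-homogeneity of $\gamma$, namely $\gamma(\lambda t) = \lambda^{\beta}\gamma(t)$, to reduce to $\ell(I)=1$ by a centric dilation; this leaves the ratio $|I(x,+,W_1)|/\ell(I)$ unchanged, so it suffices to show that $|I(x,+,W_1)|$ tends to an absolute constant as $A_1\to\infty$, uniformly in $x\in Q$. In the normalization, $Q$ is a unit $\gamma$-cube, $I_{A_1}=[A_1,A_1+3]$ and $I_{A_2}=[10A_1,10A_1+3]$, and on both of these intervals $t$ has a fixed sign, so the distinction between $\gamma_{\op{adj}}$ and $\gamma_{\op{opp}}$ does not intervene.

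Next I would split the condition $x+\gamma(t)\in W_1$ into its two defining parts $x+\gamma(t)\in\wt Q$ and $x+\gamma(t)\in\wt P$. The first is equivalent to the existence of $s\in I_{A_1}$ with $\gamma(t)-\gamma(s)\in Q-x$; the second unfolds analogously with an $s'\in I_{A_2}$, after writing out $\wt P = Q + \gamma(c_{I_{A_1}}) + \bigl(\gamma(c_{I_{A_2}})-\gamma(I_{A_2})\bigr)$. The key analytic input is the tangent expansion
\[
\gamma(t)-\gamma(s)=\gamma'(s)(t-s)+O\bigl((t-s)^2\bigr),
\]
valid on any bounded window of $t-s$. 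Since the second component of $\gamma'(s)$ has magnitude $\sim s\sim A_1$, the second-coordinate constraint imposed by the unit $\gamma$-cube $Q-x$ binds first and forces $|t-s|=O(A_1^{-1})$. As $s$ sweeps the unit-length interval $I_{A_1}$, the admissible $t$-values form an interval of essentially the same length, centred on a curve prescribed by the expansion; the parallel argument with $I_{A_2}$ yields a second such interval with a different tangent slope. In the limit $A_1\to\infty$, the intersection of these two $t$-intervals reduces to a single interval whose length depends only on the fixed ratio $A_2/A_1=10$ and on the common shape of $Q$, not on $x$.

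Uniformity in $x$ I would obtain by observing that translating $x$ inside $Q$ only shifts the centre of each $t$-interval by an amount proportional to the displacement divided by $s$ or $s'$, hence by $O(A_1^{-1})$; in particular the limiting length is the same for every $x\in Q$. The hard part, and the main obstacle of the proof, is the quantitative accounting of error: one has to check that the quadratic Taylor remainder, the variation of $\gamma'$ across the unit-length intervals $I_{A_i}$, and the boundary effects where the sweep of $s$ enters and exits $Q-x$ are all $O(A_1^{-1})$, uniformly in $x\in Q$. These bookkeeping estimates are analogous to the corresponding computation of \cite[Section~3]{Oik2022}; the essentially new feature is that the cross-intersection $\wt Q\cap\wt P$ replaces the earlier single sweep, which is exactly what the Key-lemma~\ref{lem:key} was engineered to control.
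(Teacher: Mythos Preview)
Your outline is essentially the approach the paper has in mind: the paper's own proof is a one-line deferral to \cite[Lemma~3.3]{Oik2022}, and that computation proceeds exactly by the scaling reduction to $\ell(I)=1$, the tangent expansion of $\gamma(t)-\gamma(s)$, and the observation that the second-coordinate derivative of order $A_1$ forces all corrections to be $O(A_1^{-1})$ uniformly in $x\in Q$.

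Two small imprecisions worth noting, neither of which breaks the argument. First, after normalisation $I_{A_1}$ has length $N_1=3$, not $1$. Second, your description of the $\wt P$-constraint as ``the parallel argument with $I_{A_2}$'' giving a $t$-interval of the same type is a bit loose: writing $s'=c_{I_{A_2}}+u$ and eliminating $u$ between the two coordinate constraints shows that the $\wt P$-condition actually pins $t-c_{I_{A_1}}$ to an interval of length $\bigl(1-\tfrac{c_{I_{A_1}}+t}{2c_{I_{A_2}}}\bigr)^{-1}\to\tfrac{10}{9}$, which in the limit sits strictly inside $I_{A_1}$; hence it is the $\wt P$-interval, not the intersection geometry, that determines $C$. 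Finally, the Key-lemma~\ref{lem:key} is not used here---it governs the covering of $Q$ by segments from $W_1$, which is a separate ingredient of the awf argument.
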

\begin{proof} The details are completely analogous to those in \cite[Lemma 3.3.]{Oik2022}. 
\end{proof}
\begin{lem}\label{lem:geom1}	
	Let $x\in Q$ be arbitrary. Then,
	\begin{align*}
		\abs{I(x,+,W_1)}\sim \ell(I),\qquad \lim_{A_1\to\infty}\sup_{\substack{Q\in\calR^{\gamma} \\ x,x'\in Q }} \frac{\abs{I(x,+,W_1)}}{\abs{I(x',+,W_1)}} = 1.
	\end{align*}
\end{lem}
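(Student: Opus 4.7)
The plan is to derive both assertions as direct consequences of the uniform convergence provided by Lemma \ref{lem:geom12}. Let $C>0$ denote the absolute constant from that lemma, so that $\abs{I(x,+,W_1)}/\ell(I) \to C$ as $A_1\to\infty$ with the convergence being uniform in the choice of $Q\in\calR^{\gamma}$ and $x\in Q$.

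For the comparability $\abs{I(x,+,W_1)}\sim \ell(I)$, the plan is to fix $A_1$ large enough (using uniform convergence) so that for every $Q\in\calR^{\gamma}$ and every $x\in Q$ there holds $\frac{C}{2}\leq \frac{\abs{I(x,+,W_1)}}{\ell(I)}\leq \frac{3C}{2}$. This provides the two-sided bound uniformly, hence $\abs{I(x,+,W_1)}\sim \ell(I)$ with constants independent of the data.

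For the second assertion, let $\vare>0$ be arbitrary. By uniform convergence in Lemma \ref{lem:geom12}, there exists $A_1^*=A_1^*(\vare)$ such that for all $A_1\geq A_1^*$, every $Q\in\calR^{\gamma}$ and every $x\in Q$,
\begin{align*}
	\Babs{\frac{\abs{I(x,+,W_1)}}{\ell(I)}-C}\leq \vare.
\end{align*}
Picking any two $x,x'\in Q$, dividing the corresponding inequalities yields
\begin{align*}
	\frac{C-\vare}{C+\vare}\leq \frac{\abs{I(x,+,W_1)}}{\abs{I(x',+,W_1)}}\leq \frac{C+\vare}{C-\vare},
\end{align*}
uniformly in $Q\in\calR^{\gamma}$ and $x,x'\in Q$. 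Sending $\vare\to 0$ (equivalently $A_1\to\infty$) closes the ratio to $1$, establishing the limit.

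There is no real obstacle here; the entire content of the statement is packaged in the uniformity of the limit in Lemma \ref{lem:geom12}, and both conclusions follow by elementary manipulations of that uniform estimate. The only point to be mindful of is that division by $\abs{I(x',+,W_1)}$ is harmless precisely because the first assertion guarantees this quantity is bounded away from zero on a scale comparable to $\ell(I)$.
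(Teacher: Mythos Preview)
Your proof is correct and follows exactly the same approach as the paper, which simply states that the lemma follows immediately from Lemma \ref{lem:geom12}. You have merely spelled out the elementary details of how uniform convergence to the constant $C$ yields both the two-sided comparability and the ratio limit.
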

\begin{proof} Follows immediately from Lemma \ref{lem:geom12}.
\end{proof}

\subsection{Auxiliary functions} In this section we comment on how to choose the auxiliary functions $g_Q,$ $g_{W_1}$ and $g_{P}.$
We first take a look at the rectangle $P$. For this, define
$$
R_{\delta_1,\delta_2}(r) = \delta_1\Big[0,2^{-(r-1)}\frac{\ell(I)}{A_1}\Big]\times \delta_2\big[0, 2^{-r}\ell(I)^2\big],\qquad \delta_1,\delta_2\in\{-1,1\},\qquad 1\leq r <\infty.
$$ 
Let $v_{lt}$ and $v_{rb}$ stand for the left-top and right-bottom vertices of $P$ and define
\begin{align*}
	P^{lt}_r= v_{lt} + R_{1,-1}(r),\qquad P^{rb}_r = v_{rb} +R_{-1,1}(r),
\end{align*}
parts of their boundaries
\begin{equation*}
	\Delta_r(P^{lt}_r) =  \overline{\partial P^{lt}_r\setminus\partial P}, \qquad  
	\Delta_r(P^{rb}_r)  =\overline{\partial P^{rb}_R\setminus\partial P},
\end{equation*}
and the central area of $P,$
\begin{equation*}
	P^c  =  P\setminus\big(P^{lt}_1\cup P^{rb}_1\big).
\end{equation*}
See Figure \ref{fig:blowup}.
If $z\in P^c$ and $y\in\phi(z,+,W_1),$ then $\babs{I(y,+,P)}\sim \frac{\ell(I)}{A_1}.$ In fact, Lemma \ref{lem:geom2} shows that the sets $\Delta_r(P^{lt}_r),	\Delta_r(P^{rb}_r)$ exactly quantify this. The proof of Lemma \ref{lem:geom2} is analogous to the proof of \cite[Lemma 3.1.1.]{Oik2022}, we omit the details.
\begin{lem}\label{lem:geom2} 
	Let $z\in P^c$ and $y\in\phi(z,-,W_1),$ then 
	\begin{align*}
		\abs{I(y,+,P)}\sim \frac{\ell(I)}{A_1}.
	\end{align*}
	Let $z\in \Delta_r(P^{lt}_r)\cup \Delta_r(P^{rb}_r)$ and $y\in\phi(z,-,W_1).$ Then, 
	\begin{align*}
		\abs{I(y,+,P)}&\sim 2^{-r}\frac{\ell(I)}{A_1}.
	\end{align*}
\end{lem}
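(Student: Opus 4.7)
The plan is to reduce everything to a tangent-line analysis of the curve $t\mapsto y+\gamma(t)$ at the single time $t_0$ for which $y+\gamma(t_0)=z$, and then to exploit that the auxiliary rectangles $R_{\delta_1,\delta_2}(r)$ have been designed with aspect ratio matching the slope of that tangent.

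Since $y\in W_1\subset\wt{P}$ and $z\in P$, the parameter $t_0$ lies in $I_{A_2}$, so $|t_0|\sim A_2\ell(I)\sim A_1\ell(I)$. In the regimes $\gamma\in\{\gamma_{\op{adj}},\gamma_{\op{opp}}\}$ one has $\vare_1=\vare_2=+1$ and $t_0>0$, so
\[
\gamma(t_0+\Delta t)-\gamma(t_0)=\big(\Delta t,\ 2t_0\Delta t+(\Delta t)^2\big).
\]
I would first observe that throughout the range $|\Delta t|\lesssim \ell(I)/A_1$ which will control every computation below, the curvature term $(\Delta t)^2\lesssim\ell(I)^2/A_1^2$ is negligible compared to the height $\ell(I)^2$ of $P$, so $t\mapsto y+\gamma(t)$ may be replaced near $t_0$ by its tangent line at $z$, a nearly vertical line of slope $2t_0\sim A_1\ell(I)$.

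Case 1, $z\in P^c=P\setminus(P^{lt}_1\cup P^{rb}_1)$: in every subposition the tangent has vertical room $\gtrsim\ell(I)^2$ on at least one side, so the binding constraint is the vertical one $|2t_0\Delta t|\lesssim\ell(I)^2$, giving $|\Delta t|\sim\ell(I)/A_1$ in the dominating direction and hence $|I(y,+,P)|\sim\ell(I)/A_1$. Case 2, $z\in\Delta_r(P^{lt}_r)$: the rectangle $R_{1,-1}(r)$ has horizontal extent $2\cdot 2^{-r}\ell(I)/A_1$ and vertical extent $2^{-r}\ell(I)^2$, so its vertical-to-horizontal ratio is $\sim A_1\ell(I)$, matching the slope of the tangent. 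This is the point of the construction: the shells $P^{lt}_r$ are parabola-aligned around $v_{lt}$. From $z\in\Delta_r(P^{lt}_r)$ the tangent going up-right exits $P$ through the top side (distance $\leq 2^{-r}\ell(I)^2$ away, hence parameter $\leq 2^{-r}\ell(I)/A_1$), while going down-left it exits through the left side (distance $\leq 2^{-(r-1)}\ell(I)/A_1$ away, parameter of the same order). Summing these two sides yields $|I(y,+,P)|\sim 2^{-r}\ell(I)/A_1$, as claimed; the analysis for $z\in\Delta_r(P^{rb}_r)$ is symmetric.

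The principal obstacle I anticipate is purely bookkeeping: the two choices $\gamma\in\{\gamma_{\op{adj}},\gamma_{\op{opp}}\}$ together with the two parabola-aligned corners $v_{lt},v_{rb}$ produce several subcases, and one must verify in each of them that the tangent at $z$ really exits through the claimed sides rather than some other side. I would handle this uniformly by passing to a local frame at each of $v_{lt}$ and $v_{rb}$ with axes along $\gamma'(t_0)$ and its normal; in such a frame $R_{\delta_1,\delta_2}(r)$ is axis-aligned up to $A_1$-independent constants and the tangent-line count becomes identical in every case, which is exactly the reduction used in the proof of \cite[Lemma~3.1.1.]{Oik2022}.
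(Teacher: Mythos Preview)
Your proposal is correct and follows the same approach as the paper, which omits the details and simply refers to \cite[Lemma 3.1.1.]{Oik2022}; your tangent-line analysis, matching the slope $2t_0\sim A_1\ell(I)$ to the aspect ratio of the corner boxes $R_{\delta_1,\delta_2}(r)$, is exactly that reduction. One small tightening for Case~1: the phrase ``vertical room $\gtrsim\ell(I)^2$ on at least one side'' is true for every $z\in P$ and does not by itself give the lower bound---you need that $z\notin P^{lt}_1\cup P^{rb}_1$ guarantees that the direction with large vertical room also has horizontal room $\gtrsim\ell(I)/A_1$, so that the vertical constraint really is the binding one; this is precisely the case split you allude to in your final paragraph.
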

We still need to specify the auxiliary functions $g_Q,g_{W_1},g_P.$  The choices 
$g_Q = 1_Q,$ and $g_P = 1_P$
are particularly simple, the choice of $g_{W_1}$ is more delicate, however. As we expand (in the first iteration \eqref{A1}, \eqref{A2}, but the same problem is in all iterations) we end up with
\begin{align*}
	h_{W_1} = \frac{g_{W_1} H_{\gamma}h_Q}{H_{\gamma}g_P}.
\end{align*} 
The denominator $H_{\gamma}g_P$ vanishes towards $\partial_lW_1\cup\partial_rW_1,$ see Figure \ref{fig:blowup}, and to attenuate the resulting blow-up we choose $g_{W_1}$ so that 
$\abs{g_{W_1}}\sim 1_{W_1}\abs{H_{\gamma}g_P}.$
\begin{fig}[h]
	\centering
	\includegraphics[scale=0.9]{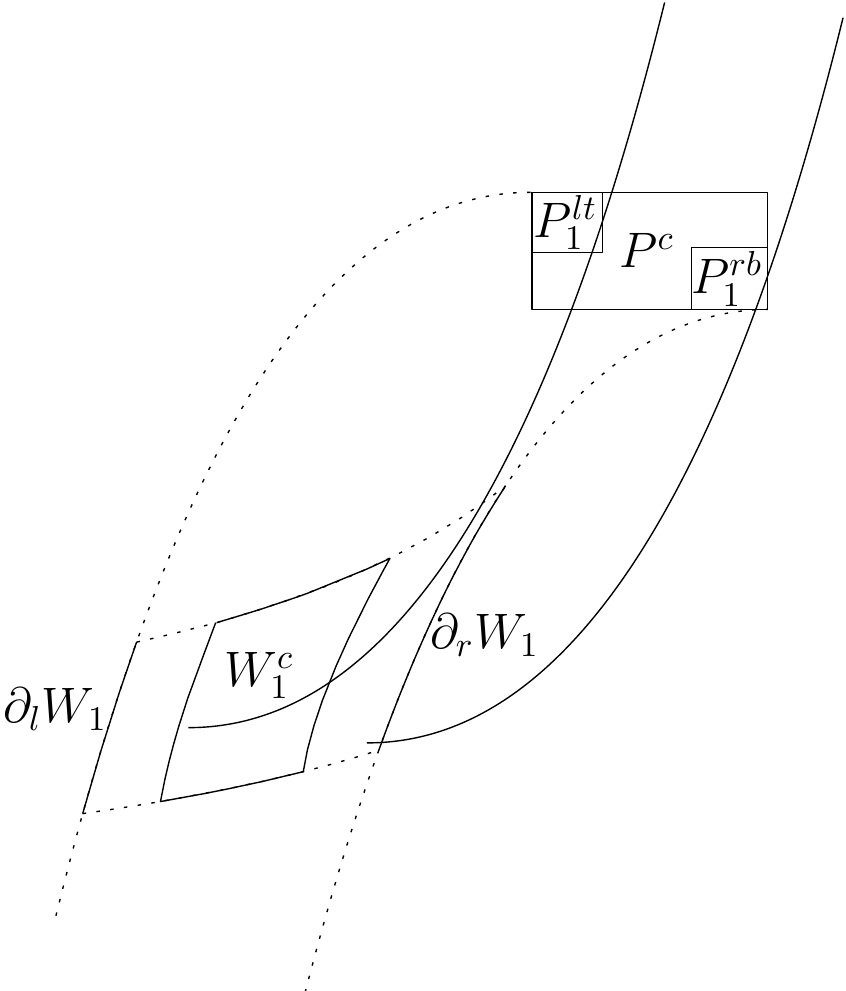}
	\caption{The function $1/H_{\gamma}g_P$ behaves well on $W_1^c$ and blows up towards $\partial_lW_1$ and $\partial_rW_1.$
	} 
	\label{fig:blowup}
\end{fig}
To pull of everything in the end, it helps to construct the functions $\{g_{W_1}\}_{Q\in\calR^{\gamma}}$ in a different way than simply letting the previous "$\sim$" be an "$=$" as then we have the following desired properties ready to go:
\begin{enumerate}[$(i)$]
	\item there holds that
	\begin{align*}
		1_{W_1^c}g_{W_1} = 1_{W_1^c},\qquad W_1^c= \big\{y\in W_1: \exists z\in P^c: y\in \phi(z,+,W_1)\big\},
	\end{align*}
	\item and for all $y\in W_1$ that
	\begin{align*}
		g_{W_1}(y)\sim\abs{I(y,+,P)}\frac{A_1}{\ell(I)},
	\end{align*}
	where the implicit constants do not depend on the data $y,Q,$ 
	\item and
	\begin{align*}
		\lim_{A_1\to\infty}\sup_{\substack{ Q\in\calR^{\gamma} \\ z\in P \\  t,t'\in I(z,+,W_1) }}\frac{g_{W_1}(z+\gamma(t))}{g_{W_1}(z+\gamma(t'))} = 	 1.
	\end{align*}
\end{enumerate}
 We direct the reader to \cite[Section 3.1.2.]{Oik2022}, where the family $\{g_{W_1}\}_{Q\in\calR^{\gamma}}$is defined in the setup of $\gamma_{\op{adj}},$ the details for how to achieve the above properties $(1),(2)$ and $(3)$ are completely analogous and we omit them.
As far as the technical apparatus is concerned we are done with the modifications. Now the claims on the lines \eqref{A3} and \eqref{A33} follow as in \cite[Section 3.2.1.]{Oik2022} by choosing the absolute constant $A_1>>1$ large enough.

 We still need to make sure that we can iterate \eqref{A5} and obtain \eqref{A6} and \eqref{A66}. Given that we have already demonstrated the first step, the argument is now entirely geometric and encapsulated in Figure \ref{fig:geom}. We simply begin from $P$ and backtrace everything back to $Q.$
 \begin{proof}[Proof of Theorem \ref{thm:osc}]
	Recall that we have specified to the parabolic curves $\gamma_{\op{opp}},\gamma_{\op{adj}}.$
	The information \eqref{eq:osc1} follows immediately from how the sets were defined in \eqref{eq:auxintervals}, \eqref{eq:parameters} and \eqref{eq:geometry1}. The bounds \eqref{eq:osc2} follow immediately from those recorded on the lines \eqref{A3} and \eqref{A6}. 
	The left bound of \eqref{eq:osc3} is proved by only \eqref{A33} and \eqref{A66},
	see how the proof of Theorem \ref{thm:adjacent} is concluded in \cite[Section 3.3.]{Oik2022}.
	
	For the second estimate of \eqref{eq:osc3} is new and we provide all details.
	From the definition of the intervals $I_{A_1},I_{A_2},$ there holds that 
	\[
	I(x,-,S_1) \subset \wt{I} := 10\big(I_{A_1}\cup -I_{A_1}\cup I_{A_2}\cup -I_{A_2}\big),\qquad x\in S_2,
	\]
	where the dilation by $10$ is centric, separately for each of the intervals.
	Thus,
	\begin{align*}
		\babs{	\bave{[b,H_{\gamma}]\psi_{S_1},\psi_{S_2}}} &= \Babs{\int_{\R^2}\psi_{S_2}(x)\int_{\R}\left(b(x)-b(x-\gamma(t))\right)\psi_{S_1}(x-\gamma(t))\frac{\ud t}{t}\ud x }\\
		&\qquad\lesssim \int_{S_2} \int_{I(x,-,S_1)}\babs{b(x)-b(x-\gamma(t)}\frac{\ud t}{\abs{t}}\ud x \\ 
		&\qquad\qquad \lesssim \int_{\wt{I}}\int_{S_2} \babs{b(x)-b(x-\gamma(t)}\ud x \frac{\ud t}{\abs{t}}.
	\end{align*}
	There exists an absolute constant $C=C_{\gamma}$ such that for any $t\in \wt{I}$ and $x\in S_2$ there holds that $x-\gamma(t)\in (C\Lambda)^{\beta}Q.$ Thus,
	\begin{align*}
		\int_{S_2} \babs{b(x)-b(x-\gamma(t)}\ud x &\leq \int_{S^2} \babs{b(x)-\bave{b}_{(C\Lambda)^{\beta}Q}}\ud x \\ 
		&\qquad+ \int_{S^2} \babs{\bave{b}_{(C\Lambda)^{\beta} Q}-b(x-\gamma(t)}\ud x \\
		&\qquad\qquad \lesssim \int_{(C\Lambda)^{\beta}Q} \babs{b(x)-\bave{b}_{(C\Lambda)^{\beta}Q}}\ud x.
	\end{align*}
	Thus, 
	\[
	\babs{	\bave{[b,H_{\gamma}]\psi_{S_1},\psi_{S_2}}} \lesssim \int_{\wt{I}} \int_{(C\Lambda)^{\beta}Q} \babs{b(x)-\bave{b}_{(C\Lambda)^{\beta}Q}}\ud x\frac{\ud t}{\abs{t}}\lesssim \int_{(C\Lambda)^{\beta}Q} \babs{b(x)-\bave{b}_{(C\Lambda)^{\beta}Q}}\ud x,
	\]
	where in the last estimate we used that $\ud t/\abs{t}$ is a Haar measure.
\end{proof}

\section{Necessity for $q\geq p$}
In this section we prove Theorems \ref{thm:lb} and \ref{thm:nec}.
\subsection{Boundedness} Theorem \ref{thm:lb} is an instant consequence of Theorem \ref{thm:osc}.
\begin{proof}[Proof of Theorem \ref{thm:lb}] Let $Q\in \calR^{\gamma}$ and $\psi_{S_i}$ be the functions generated by Theorem \ref{thm:osc}. Then, we have 
	\begin{align*}
		\int_Q\abs{b-\ave{b}_Q} &\lesssim \babs{\ave{[b,H_{\gamma}]\psi_{S_1},\psi_{S_2}}} \\ 
		&= \babs{\bave{ \lambda[b,H_{\gamma}]\psi_{S_1} , \lambda^{-1}\psi_{S_2} }} \lesssim \Norm{[b,H_{\gamma}]}{L^p_{\mu}\to L^q_{\lambda}}\Norm{\psi_{S_1}}{L^p_{\mu}}\Norm{\psi_{S_2}}{L^{q'}_{\lambda^{-1}}}.
	\end{align*}
	Recall that $\mu\in A_{p,p}^{\gamma}$ if and only if $\mu^p\in A_p^{\gamma};$ and   that $\lambda\in A_{q,q}^{\gamma}$ if and only if $\lambda^{-1}\in A_{q',q'}^{\gamma}$ if and only if $\lambda^{-q'}\in A_{q'}^{\gamma}.$ Hence both $\mu^p$ and $\lambda^{-q'}$ are doubling; this means that for any $\gamma$-cube $Q\in\calR^{\gamma} = \calR^{\beta}$ there holds that 
	\begin{align}
		\mu^p(2^{\beta}Q)\lesssim \mu^p(Q),\qquad 	\lambda^{-q'}(2^{\beta}Q)\lesssim 	\lambda^{-q'}(Q).
	\end{align}
	Then, by $\abs{\psi_{S_i}}\lesssim 1_{S_i}$ and \eqref{eq:osc1},  we find
	\begin{align*}
		\Norm{\psi_{S_1}}{L^p_{\mu}}\Norm{\psi_{S_2}}{L^{q'}_{\lambda^{-1}}} &\lesssim \mu^p(S_1)^{1/p}\lambda^{-q'}(S_2)^{1/q'} \lesssim \mu^p(Q)^{1/p}\lambda^{-q'}(Q)^{1/q'}.
	\end{align*}
	From $\mu\in A_{p,p}$ and $\lambda \in A_{q,q}$ it follows (see \cite[Proposition 3.1.]{HOS2022}) that 
	\[
	\mu^p(Q)^{1/p}\lambda^{-q'}(Q)^{1/q'}\lesssim_{[\mu]_{A_{p,p}^{\gamma}} [\lambda]_{A_{q,q}^{\gamma}}}\ \nu(Q)^{1/p+1/q'} = \nu(Q)^{1+\alpha/\abs{\beta}},
	\]
	concluding the proof.
\end{proof}

\subsection{Compactness}
We turn to compactness and prove Theorem \ref{thm:nec}. We follow the approach of Uchiyama \cite{Uch1978}, for an alternative presentation see \cite{HOS2022}. Theorem \ref{thm:osc} does not have as much freedom as the corresponding one for non-degenerate SIOs does (implicit in \cite{HyLpLq}, and for a similar layout as here see \cite[Proposition 4.2.]{HOS2022}), hence in the proof we need to more carefully exploit the precise form of the sets $Q,W_1,W_2,P.$
The key difference to the proof in \cite{HOS2022} is the replacement of \cite[Lemma 5.21.]{HOS2022} with the following.
\begin{lem}\label{lem:seq}
	Let $1<p,q<\infty$, let $\mu$ and $\lambda$ be arbitrary weights, and let $U\colon L_\mu^p\to L_\lambda^q$ be a bounded linear operator. Then, there does not exist a sequence $\{u_i\}_{i=1}^\infty$ with the following properties:
	\begin{enumerate}[$(i)$]
		\item $\sup_{i\in\N}\|u_i\|_{L_\mu^p}\lesssim 1,$
		\item for all sequences $\Norm{\{\beta_i\}_i}{\ell^{\infty}}\lesssim 1$ there holds that 
		\[
		\big\|\sum_{i=m+1}^k \beta_iu_i\big\|_{L_\mu^p}
		\lesssim \big(\sum_{i=m+1}^k\|\beta_iu_i\|_{L_\mu^p}^p\big)^\frac{1}{p},\qquad k\geq m,
		\] 
		\item there exists $\Phi\in L^q_{\lambda}$ such that  $$\lim_{i\to\infty} \|\Phi-U(u_i)\|_{L_\lambda^q} = 0,\qquad\|\Phi\|_{L_\lambda^q}>0.$$
	\end{enumerate} 
\end{lem}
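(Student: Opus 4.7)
The plan is to derive a contradiction by applying the hypothesis (ii) with the constant sequence $\beta_i\equiv 1$ and averaging. Suppose, for the sake of contradiction, that a sequence $\{u_i\}_{i=1}^\infty$ with properties $(i)$--$(iii)$ exists. Set
\[
v_N := \frac{1}{N}\sum_{i=1}^N u_i,\qquad N\in\N.
\]
Taking $\beta_i=1$ (which satisfies $\|\{\beta_i\}\|_{\ell^\infty}\leq 1$) in $(ii)$, combined with the uniform bound $\sup_i\|u_i\|_{L^p_\mu}\lesssim 1$ from $(i)$, yields
\[
\Big\|\sum_{i=1}^N u_i\Big\|_{L^p_\mu}\lesssim \Big(\sum_{i=1}^N \|u_i\|_{L^p_\mu}^p\Big)^{1/p}\lesssim N^{1/p},
\]
so that $\|v_N\|_{L^p_\mu}\lesssim N^{1/p-1}$. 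Since $p>1$, this forces $\|v_N\|_{L^p_\mu}\to 0$ as $N\to\infty$, and by the boundedness of $U$ we therefore have
\[
\lim_{N\to\infty}\|U(v_N)\|_{L^q_\lambda}=0.
\]

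On the other hand, by linearity $U(v_N)=\frac{1}{N}\sum_{i=1}^N U(u_i)$, and property $(iii)$ tells us that $U(u_i)\to\Phi$ in $L^q_\lambda$. Since the Ces\`aro mean of a convergent sequence in a Banach space converges to the same limit, we obtain $U(v_N)\to \Phi$ in $L^q_\lambda.$ Comparing with the previous display gives $\|\Phi\|_{L^q_\lambda}=0$, which contradicts the positivity assumption in $(iii)$.

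There is no real obstacle here; the only subtlety worth flagging is ensuring the constant sequence $\beta_i=1$ is admissible in $(ii)$ (it is, since $\|\{1\}_i\|_{\ell^\infty}=1$) and that $(ii)$ is used with $m=0$, which is the natural reading. The argument is essentially that $(ii)$ prevents coherent reinforcement of the $u_i$, while $(iii)$ requires exactly such reinforcement at the level of $U(u_i)$; averaging then exposes the incompatibility.
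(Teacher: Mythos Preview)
Your argument is correct and self-contained. The paper does not give its own proof here but simply refers to \cite[Lemma 5.21]{HOS2022}, so a direct comparison of approaches is not possible from the text; your Ces\`aro-averaging trick, exploiting that $(ii)$ forces $\|v_N\|_{L^p_\mu}\lesssim N^{1/p-1}\to 0$ while $(iii)$ forces $U(v_N)\to\Phi\neq 0$, is a clean and natural route. The only cosmetic point is that $(ii)$ as written has sums starting at $m+1$, so if one insists on $m\geq 1$ the sum begins at $i=2$; this is harmless since $\|\sum_{i=1}^N u_i\|_{L^p_\mu}\leq \|u_1\|_{L^p_\mu}+\|\sum_{i=2}^N u_i\|_{L^p_\mu}\lesssim 1+N^{1/p}$ and the conclusion is unchanged.
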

The proof is identical to \cite[Lemma 5.21.]{HOS2022}, where the assumption
\begin{align}
\{x\colon u_i(x)\neq 0\}\cap \{x\colon u_j(x)\neq 0\}=\emptyset,\quad i\neq j
\end{align}
guarantees the validity of the point $(ii)$ in the proof.

\begin{proof}[Proof of Theorem \ref{thm:nec}]  As $[b,H_{\gamma}]$ is compact, it is bounded and by Theorem \ref{thm:lb} we know that $b\in\BMO_{\nu}^{\gamma,\alpha}.$ To reach a contradiction, we assume on the contrary that $b\not\in\VMO_{\nu}^{\gamma,\alpha},$ that is $b\in \BMO_{\nu}^{\gamma,\alpha}\setminus \VMO_{\nu}^{\gamma,\alpha}.$ Thus, at least one of the conditions \eqref{eq:VMOv1},  \eqref{eq:VMOv2},  \eqref{eq:VMOv3} fails. If any of them fails, we construct a sequence $\{u_i\}_{i=1}^{\infty}$ of functions as in Lemma \ref{lem:seq}, thus contradict the boundedness of $[b,H_{\gamma}]$ and conclude that $b\in\VMO_{\nu}^{\gamma,\alpha}.$
	
	If any of the three conditions fails, we are guaranteed a sequence of $\gamma$-cubes $\{Q^j\}_{j=1}^{\infty}\subset\calR^{\gamma}$ such that $1\lesssim \calO_{\nu}^{\gamma,\alpha}(b;Q^j).$ By Theorem \ref{thm:osc} it follows that
	\begin{align}\label{st1}
		1\lesssim \nu^{-(1+\alpha/\abs{\beta})}(Q^j)\babs{\bave{[b,H_{\gamma}]\psi_{S_1^j},\psi_{S_2^j}}},
	\end{align}
	where $\psi_{S_1^j},\psi_{S_2^j}$ are as on the line \eqref{eq:osc1} and \eqref{eq:osc2}. Using 
	$$
	\nu^{1+\alpha/\abs{\beta}}(Q^j)\sim \mu^{p}(Q^j)^{1/p}\lambda^{-q'}(Q^j)^{1/q'},
	$$ see \cite[Proposition 3.1.]{HOS2022}, the bound \eqref{st1} is equivalent with the first bound in the following estimate \eqref{st2}, the second is H\"{o}lder's inequality, and for the third we use the doubling property of $\lambda^{-q'}$ coupled with the information \eqref{eq:osc1}:
	\begin{equation}\label{st2}
		\begin{split}
			1&\lesssim   \Babs{\Bave{\lambda[b,H_{\gamma}]\Big(\frac{\psi_{S_1^j}}{\mu^{p}(Q^j)^{1/p}}\Big),\lambda^{-1}\frac{\psi_{S_2^j}}{ \lambda^{-q'}(Q^j)^{1/q'}}}} \\ 
			&\lesssim \Norm{[b,H_{\gamma}]u_j}{ L^q_{\lambda}}\BNorm{\frac{\psi_{S_2^j}}{ \lambda^{-q'}(Q^j)^{1/q'}}}{L^{q'}_{\lambda^{-1}}} \lesssim \Norm{[b,H_{\gamma}]u_j}{ L^q_{\lambda}},\qquad u_j := \psi_{S_1^j}/\mu^{p}(Q^j)^{1/p}.
		\end{split}
	\end{equation}
	By the doubling property of $\mu^{p}$ we obtain $\Norm{u_j}{L^p_{\mu}}\lesssim 1.$ It follows from compactness that there exists a subsequence of $\{[b,H_{\gamma}]u_j\}_{j=1}^{\infty}$ with a limit $\Phi\in L^{q}_{\lambda},$ and by \eqref{st2} $\|\Phi\|_{L_\lambda^q}>0,$ thus the property $(iii)$ of Lemma \ref{lem:seq} is satisfied. We conclude that whenever the $\VMO_{\nu}^{\gamma,\alpha}$ condition fails we can construct a sequence of functions satisfying the properties $(i)$ and $(iii)$ of Lemma \ref{lem:seq}. It remains to show that in all three cases we can also arrange the property $(ii).$
	
	If the condition \eqref{eq:VMOv3} fails, we obtain a sequence satisfying
	\begin{align}\label{eq:case1}
		\lim_{j\to\infty}\dist(Q^j,0) = \infty.
	\end{align}  
	From \eqref{eq:case1} we find $\gamma$-cubes ever farther away from the origin and extract a subsequence such that $S_1^{k}\cap S_1^{l} = \emptyset$ if $k\not=l,$ where we understand that $S^j_1$ is the set generated by Theorem \ref{thm:osc} for a fixed $Q^j.$ Thus by the support localization \eqref{eq:osc2} we achieve
	$$ 
	\{x\colon u_k(x)\neq 0\}\cap \{x\colon u_l(x)\neq 0\} = \emptyset,
	$$ which implies $(ii),$ by the proof of Lemma \ref{lem:seq}.
	
	If \eqref{eq:VMOv2} fails, then we obtain a sequence 
	\begin{align}\label{eq:case2}
		\lim_{j\to\infty}\ell(Q^j)= \infty.
	\end{align}
	Note that for any finite sequence of rectangles $\{Q^j\}_{j=1}^N\subset \calR^{\gamma},$ if $\ell(Q^{N+1})$ is sufficiently large, then at least one of
	$S_1^{N+1}, S_2^{N+1}$ does not intersect $\cup_{j=1}^NQ^j;$ indeed, by \eqref{eq:osc1} we have $\dist(S_1^k,S_2^k) \to \infty$ as $k\to \infty$, thus it is enough to pick the $(N+1)$th element in the subsequence to satisfy $\dist(S_1^{N+1},S_2^{N+1}) > \diam(\cup_{j=1}^NQ^j).$ 
	It follows that we are guaranteed an indexing sequence $\{Q^j\}_{j=1}^{\infty}$ such that $\{S_{i(j)}^j\}_{j=1}^{\infty}$ is pairwise disjoint with some choice of $i(j)\in\{1,2\}.$ Let 
	$\{S_1^j\}_{j=1}^{\infty}$ and $\{S_2^j\}_{j=1}^{\infty}$
	be the sub-sequences where $i=1$ and $i=2,$ respectively. Clearly both are pairwise disjoint and at least one has infinite length. If $\{S_1^j\}_{j=1}^{\infty}$ has infinite length, then we are done. If $\{S_2^j\}_{j=1}^{\infty}$ has infinite length, then we use that $[b,H_{\gamma}]$ is bounded if and only if $[b,H_{\gamma}^*]$ is bounded, and
	\[
	\bave{[b,H_{\gamma}]\psi_{S_1^j},\psi_{S_2^j}} = -\bave{[b,H_{\gamma}^*]\psi_{S_2^j},\psi_{S_1^j}}.
	\] 
	Then, the argument proceeds as above.
	
	Lastly, we suppose that the condition \eqref{eq:VMOv1} fails and 
	\begin{align}\label{eq:st2}
		\lim_{j\to\infty}\ell(Q^j) = 0.
	\end{align}
	We find a subsequence $\{y_{j}\}_{j=1}^{\infty}\subset \{u_j\}_{j=1}^{\infty}$ and a constant $C>0$
such that
	\begin{align}\label{eq:ii}
		\big\|\sum_{j\in J}\beta_jy_j\big\|_{L_\mu^p}
		\leq C \bigg(\sum_{j\in J}\|\beta_jy_j\|_{L_\mu^p}^p\bigg)^\frac{1}{p},
	\end{align}
	for all $\abs{\beta_j}\leq 1$ and $J\subset \N,$ thus checking the point $(ii)$ of Lemma \ref{lem:seq}. 
	When $J = \{1\}$ and $y_1 = u_1$, the bound \eqref{eq:ii} holds with $C=1.$ 
	Suppose that we have a subsequence $\{y_j\}_{j=1}^N = \{u_{i(j)}\}_{j=1}^N,$ $i(j+1)>i(j),$ such that \eqref{eq:ii} holds for all $J\subset [1,2,\dots,N]$ and with a constant $C<2.$ Then, we show how to pick $y_{N+1}\in \{u_j\}_{j=i(N)+1}^{\infty}$ from the tail so that \eqref{eq:ii} is satisfied for all  $J\subset [1,2,\dots,N+1]$ and with another constant $C'\in [C,2).$ Then by recursion \eqref{eq:ii} holds with $C=2$ and we are done.

	If there exists $u_k\in \{u_j\}_{j=i(N)+1}^{\infty}$ such that 
	\begin{align}\label{eq:st3}
		S_1^{k}\cap  \big(\cup_{j=1}^NS_1^{i(j)}\big)  = \emptyset,
	\end{align}
	we let $y_{N+1} = u_k.$ Then by $\eqref{eq:st3}$ and the inductive hypothesis,
	\begin{align*}
		\big\|\sum_{j\in J}\beta_jy_j\big\|_{L_\mu^p} &= \Big(1_{J}(N+1)\Norm{\beta_{N+1}y_{N+1}}{L^p_{\mu}}^p + 	\big\|\sum_{j\in J\setminus\{N+1\}}\beta_jy_j\big\|_{L_\mu^p}^p\Big)^{\frac{1}{p}}  \\ 
		&\leq \Big(1_{J}(N+1)\Norm{\beta_{N+1}y_{N+1}}{L^p_{\mu}}^p + C \sum_{j\in J\setminus\{N+1\}}\hspace{-1em}\big\|\beta_jy_j\big\|_{L_\mu^p}^p\Big)^{\frac{1}{p}} \\
		&\leq \max(1,C)\bigg(\sum_{j\in J}\|\beta_jy_j\|_{L_\mu^p}^p\bigg)^\frac{1}{p},
	\end{align*}
	and as $C' := \max(1,C)<2$ the choice $y_{N+1}$ is as desired.
	
	If \eqref{eq:st3} is not satisfied, then $S_1^k\subset B_{\beta}(0,R),$ for a sufficiently large $R$ and all $k > i(N).$
	We denote $y_{N+1} = u_{k}$ for some $u_k\in \{u_j\}_{j=i(N)+1}^{\infty}.$
	By $\supp(y_{N+1})\subset S_1^{k},$ we write
	\begin{align}\label{eq:st4}
		\bNorm{\sum_{j\in J} \beta_jy_j }{L^p_{\mu}} =  \Big(\bNorm{1_{S_1^{k}}\sum_{j\in J} \beta_jy_j }{L^p_{\mu}}^p + \bNorm{1_{\R^2\setminus S_1^{k}}\sum_{j\in J\setminus\{N+1\}} \beta_jy_j }{L^p_{\mu}}^p\Big)^{\frac{1}{p}}.
	\end{align}
	By the inductive hypothesis the latter term is bounded as desired,
	\begin{align}\label{eq:st5}
		\bNorm{1_{\R^2\setminus S_1^{k}}\sum_{j\in J\setminus\{N+1\}} \beta_jy_j }{L^p_{\mu}}^p\leq C\sum_{j\in J\setminus\{N+1\}}\bNorm{\beta_jy_j } {L^p_{\mu}}^p.
	\end{align}
	For the left term, we estimate
	\begin{align*}
		\bNorm{1_{S_1^{k}}\sum_{j\in J} \beta_jy_j }{L^p_{\mu}}\leq  	\bNorm{1_{J}(N+1)1_{S_1^{k}}\beta_{N+1}y_{N+1} }{L^p_{\mu}} + \bNorm{1_{S_1^{k}}\psi_N}{L^p_{\mu}},
	\end{align*}
    where $ \psi_N = \sum_{j\in J\setminus\{ N+1\}} \beta_jy_j.$
	By that	$\psi_N$ is a bounded function, that $S_1^k\subset B_{\beta}(0,R),$ and the $A_{\infty}^{\gamma}$-property of $\mu^p,$ there exists some $\delta=\delta_{\mu}>0$ so that 
	$$
	\Norm{1_{S_1^{k}}\psi_N}{L^p_{\mu}}\lesssim_{\psi_N} \mu^{p}(S_1^k)\lesssim_{\mu} \Big(\frac{\abs{S_1^k}}{\abs{B_{\beta}(0,R)}}\Big)^{\delta}\mu^{p}(B_{\beta}(0,R))\lesssim_{\mu,R,\delta} \abs{S_1^k}^{\delta}.
	$$
	Let $\theta>0$ be arbitrary and let $k>i(N)$ be so large that 
	$$
	\Norm{1_{S_1^{k}}\psi_N}{L^p_{\mu}}\leq	\theta\sum_{j\in J}\bNorm{\beta_jy_j } {L^p_{\mu}}^p.
	$$
	Continuing from \eqref{eq:st4} we find
	\begin{align*}
			\big\|\sum_{j\in J}\beta_jy_j\big\|_{L_\mu^p} &\leq \theta\sum_{j\in J}\bNorm{\beta_jy_j } {L^p_{\mu}}^p + C\sum_{j\in J\setminus\{N+1\}}\bNorm{\beta_jy_j } {L^p_{\mu}}^p \leq (C+\theta)\sum_{j\in J}\bNorm{\beta_jy_j } {L^p_{\mu}}^p
	\end{align*}
	so it remains to demand that $C':= C+\theta<2.$
\end{proof}

\section{The case $q< p$}\label{sect:q<p} 
\begin{proof}[Proof of Theorem \ref{thm:main:q<pA}]
	We show that $[b,H_{\gamma}]$ can be approximated by a compact operator.
	From the boundedness of $H_{\gamma},$ see e.g. Stein, Wainger \cite{SteWai1978} and Duoandikoetxea, Rubio de Francia \cite{DuoRdF1986}, and H\"{o}lder's inequality, we obtain 
	$$
	\Norm{[b,H_{\gamma}]}{L^p\to L^q} \leq \Norm{bH_{\gamma}}{L^p\to L^q} + \Norm{H_{\gamma}b}{L^p\to L^q}\lesssim \Norm{b}{L^r}.
	$$
	The commutator being unchanged modulo additive constants in the symbol $b,$ we have
	$$
		\Norm{[b-\wt{b},H_{\gamma}]}{L^p\to L^q} = \inf_{c\in\C}\Norm{[b-\wt{b}-c,H_{\gamma}]}{L^p\to L^q} \lesssim \inf_{c\in\C}\Norm{b-\wt{b}-c}{L^r} = \Norm{b-\wt{b}}{\dot L^r}.
	$$
	Thus, from the density of $C^{\infty}_c$ in $\dot L^r,$ we may assume that $b\in C^{\infty}_c.$ Let $\supp(b)\subset B_{\beta}(0,M).$ 
	
	With the auxiliary function $\chi^{0,R}= \chi_0^{0,R},$ recall \eqref{eq:chi}, we split
	\begin{align}\label{eq:tred0}
		\begin{split}
				H_{\gamma} =\left(\chi^{0,R}H_{\gamma}\chi^{0,R}+\chi^{0,R}H_{\gamma}\chi^{R,\infty}+\chi^{R,\infty}H_{\gamma}\chi^{0,R}\right)+\chi^{R,\infty}H_{\gamma}\chi^{R,\infty}.
		\end{split}
	\end{align}
Taking $R>M$ implies $\supp(b\chi^{R,\infty}) = \emptyset,$ and thus
\begin{equation}
	\begin{split}\label{eq:tred1}
		[b,\chi^{R,\infty}H_{\gamma}\chi^{R,\infty}] = b\chi^{R,\infty}H_{\gamma}\chi^{R,\infty}- \chi^{R,\infty}H_{\gamma}b\chi^{R,\infty} = 0.
	\end{split}
\end{equation}
By \eqref{eq:tred0} and \eqref{eq:tred1} it remains to show that 
\begin{align}\label{eq:tred2}
		[b,Y_{\gamma}]\in\calK(L^p,L^q),\qquad Y_{\gamma}\in \left\{ \chi^{0,R}H_{\gamma}\chi^{0,R}, \chi^{0,R}H_{\gamma}\chi^{R,\infty}, \chi^{R,\infty}H_{\gamma}\chi^{0,R}\right\}.
\end{align}
We only show \eqref{eq:tred2} for the choice $Y_{\gamma} = \chi^{0,R}H_{\gamma}\chi^{0,R},$ the rest are completely analogous.
We have $C^{\infty}_c\subset \VMO^{\beta,0},$ by \eqref{eq:CMObeta=VMObeta}, and thus $[b,H_{\gamma}] \in \calK(L^p,L^p),$ by Theorem \ref{thm:suff}.
Moreover, by H\"{o}lder's inequality the compactly supported multiplier $\chi^{0,R}$ is $L^p$-to-$L^q$ bounded, for all $q\leq p.$ As the composite of a compact operator with bounded operators is compact, we obtain 
$$
[b,Y_{\gamma}]= \chi^{0,R}\circ [b,H_{\gamma}] \circ \chi^{0,R}\in\calK(L^p,L^q).
$$
\end{proof}

\begin{thm}[\cite{HLTY2022}] Let $1<q<p<\infty,$ let $1/q = 1/r+1/p$ and $b\in \dot L^r(\R^n),$ let $T$ be a Calderón-Zygmund operator.  Then, there holds that $[b,T]\in\calK(L^p(\R^n),L^q(\R^n)).$ 
\end{thm}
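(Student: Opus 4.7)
The plan is to mirror the proof of Theorem \ref{thm:main:q<pA} essentially verbatim, since every ingredient used there admits a classical analogue in the Calder\'on-Zygmund setting. First I would note that $T$ is bounded on $L^s(\R^n)$ for every $s\in(1,\infty)$, so by H\"older's inequality
\[
\Norm{[b,T]}{L^p\to L^q} \leq \Norm{bT}{L^p\to L^q} + \Norm{Tb}{L^p\to L^q} \lesssim \Norm{b}{L^r}.
\]
Since the commutator is invariant under additive constants in its symbol, this upgrades to $\Norm{[b,T]}{L^p\to L^q}\lesssim \Norm{b}{\dot L^r}$. Combined with the density of $C^\infty_c$ in $\dot L^r$ and the fact that $\calK(L^p,L^q)$ is closed in the operator norm, it suffices to establish the conclusion under the extra assumption $b\in C^\infty_c$ with $\supp(b)\subset B(0,M)$ for some $M>0$.

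Next I would introduce cutoffs $\chi^{0,R}, \chi^{R,\infty}$ (with $\chi^{0,R} = 1_{B(0,R)}$ and $\chi^{R,\infty} = 1 - \chi^{0,R}$) and decompose
\[
T = \chi^{0,R}T\chi^{0,R} + \chi^{0,R}T\chi^{R,\infty} + \chi^{R,\infty}T\chi^{0,R} + \chi^{R,\infty}T\chi^{R,\infty}.
\]
Choosing $R>M$ forces $b\chi^{R,\infty}\equiv 0$, so the last piece satisfies $[b,\chi^{R,\infty}T\chi^{R,\infty}]=0$. It then remains to establish the compactness of $[b,Y]$ for each $Y\in\{\chi^{0,R}T\chi^{0,R},\,\chi^{0,R}T\chi^{R,\infty},\,\chi^{R,\infty}T\chi^{0,R}\}$. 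All three are handled identically; for $Y=\chi^{0,R}T\chi^{0,R}$, because $\chi^{0,R}b=b\chi^{0,R}=b$ one has the factorization
\[
[b,Y] \;=\; \chi^{0,R}\,[b,T]\,\chi^{0,R},
\]
and the other two cases are analogous sandwiches involving a $\chi^{0,R}$ on the side where $b$ acts.

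The key compactness input is then the classical theorem of Uchiyama: for a non-degenerate Calder\'on-Zygmund operator, $b\in\CMO$ suffices for $[b,T]\in\calK(L^p,L^p)$. Since $C^\infty_c\subset\CMO$ we obtain $[b,T]\in\calK(L^p,L^p)$. The multiplier $\chi^{0,R}$ is bounded on $L^p$, and is moreover bounded $L^p\to L^q$ by H\"older's inequality since $q<p$ and $\chi^{0,R}$ has compact support of finite measure. As the composition of bounded operators with a compact one is compact, the factorization $\chi^{0,R}\circ[b,T]\circ\chi^{0,R}$ lies in $\calK(L^p,L^q)$, finishing the proof.

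I do not expect any genuine obstacle: the only structural change relative to Theorem \ref{thm:main:q<pA} is that our Theorem \ref{thm:suff} is replaced by Uchiyama's classical $\CMO$-compactness theorem, while the $\dot L^r$-density reduction, the cutoff decomposition (with the far-field term annihilated by compact support of $b$), and the H\"older-based jump from $L^p$ to $L^q$ via compactly supported multipliers all transfer without modification. The conceptual content is exactly that compact support of the symbol together with the cutoff sandwich bootstraps any known $L^p\to L^p$ compactness statement into the desired $L^p\to L^q$ compactness with $q<p$.
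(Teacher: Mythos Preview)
Your proposal is correct and follows essentially the same approach as the paper: density reduction to $b\in C^\infty_c$, the four-fold cutoff decomposition with the far-field piece annihilated, and then the sandwich factorization combined with Uchiyama's $L^p\to L^p$ compactness result and the $L^p\to L^q$ boundedness of the compactly supported multiplier. The only cosmetic differences are that the paper writes $\VMO$ where you write $\CMO$ (equivalent here by Uchiyama's theorem) and illustrates with the piece $\chi^{0,R}T\chi^{R,\infty}$ rather than $\chi^{0,R}T\chi^{0,R}$; also note that non-degeneracy of $T$ is not needed for the sufficiency direction of Uchiyama's theorem.
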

\begin{proof} 
	We show that $[b,T]$ can be approximated by a compact operator.
	From the boundedness of $T$ and H\"{o}lder's inequality we have
	$$
	\Norm{[b-\wt{b},T]}{L^p\to L^q} \lesssim \Norm{b-\wt{b}}{\dot L^r}.
	$$
	Thus from the density of $C^{\infty}_c$ in $\dot L^r,$ we may assume that $b\in C^{\infty}_c.$ We split
	\begin{align}\label{eq:tred00}
	\begin{split}
		T =\left(\chi^{0,R}T\chi^{0,R}+\chi^{0,R}T\chi^{R,\infty}+\chi^{R,\infty}T\chi^{0,R}\right)+\chi^{R,\infty}T\chi^{R,\infty},
	\end{split}
\end{align}
where $\chi^{a,b} := 1_{B(0,b)}\setminus 1_{B(0,a)}.$
Taking $R>M$ implies $\supp(b\chi^{R,\infty}) = \emptyset$ and thus
\begin{equation}
	\begin{split}\label{eq:tred11}
		[b,\chi^{R,\infty}T\chi^{R,\infty}] =  b\chi^{R,\infty}T\chi^{R,\infty}- \chi^{R,\infty}Tb\chi^{R,\infty} = 0.
	\end{split}
\end{equation}
By \eqref{eq:tred00} and \eqref{eq:tred11} it remains to show that 
\begin{align}\label{eq:tred22}
	[b,Y]\in\calK(L^p,L^q),\qquad Y\in \left\{ \chi^{0,R}T\chi^{0,R}, \chi^{0,R}T\chi^{R,\infty}, \chi^{R,\infty}T\chi^{0,R}\right\}.
\end{align}
We only show \eqref{eq:tred22} for the choice $Y = \chi^{0,R}T\chi^{R,\infty},$ the rest are completely analogous.
Since $C^{\infty}_c\subset \VMO$ it follows that $[b,T] \in \calK(L^p,L^p),$ by Uchiyama \cite{Uch1978}.
Moreover, the multiplier $\chi^{R,\infty}$ is $L^p$-to-$L^p$ bounded, and the multiplier $\chi^{0,R}$ is $L^p$-to-$L^q$ bounded, for all $q\leq p.$  As the composite of a compact operator with bounded operators is compact, we obtain $$[b,Y] =  \chi^{0,R}\circ [b,T] \circ \chi^{R,\infty}\in\calK(L^p,L^q).$$
\end{proof}

\section*{Acknowledgements} I thank the research group of Katrin F\"{a}ssler and Tuomas Orponen and the department of mathematics of the University of Jyv\"{a}skyl\"{a} for their hospitability during the past academic year. I thank Carlos Mudarra for informative discussions relating to Section \ref{sect:density}.

The author was supported by the Academy of Finland through the Project 343530. 
\bibliography{references}
\end{document}